\renewcommand{\baselinestretch}{1.25}
\newlength{\vslength}
\newcommand{\ie}{{\it i.e.}}
\newcommand{\cf}{{\it c.f.}}
\newcommand{\eg}{{\it e.g.}}
\newcommand{\lhs}{{\it l.h.s.}}
\newcommand{\rhs}{{\it r.h.s.}}
\newcommand{\iid}{{\it i.i.d.}}
\newcommand{\RR}{{\mathbb R}}
\newcommand{\scrA}{{\mathscr A}}
\newcommand{\scrB}{{\mathscr B}}
\newcommand{\scrD}{{\mathscr D}}
\newcommand{\scrP}{{\mathscr P}}
\newcommand{\scrT}{{\mathscr T}}
\newcommand{\scrX}{{\mathscr X}}
\newcommand{\al}{{\alpha}}
\newcommand{\ep}{{\epsilon}}
\newcommand{\tht}{{\theta}}
\newcommand{\Tht}{{\Theta}}
\DeclareMathOperator*{\winf}{{\phantom{p}inf\phantom{p}}}
\newcommand{\vinf}{\winf\limits}
\newcommand{\ft}[2]{{\textstyle{\frac{#1}{#2}}}}
\newcommand{\co}[1]{{\mathop{\mathrm{co}}(#1)}}
\newcommand{\conv}[1]%
  {{\mathrel{\,\xrightarrow{\widthof{\,#1\,}}\,}}}
\newcommand{\convas}[1]%
  {{\mathrel{\,\xrightarrow{\widthof{\,#1\text{-a.s.}\,}}\,}}}
\newcommand{\convprob}[1]%
  {{\mathrel{\,\xrightarrow{\widthof{\,#1\,}}\,}}}
\newcommand{\convweak}[1]%
  {{\mathrel{\,\xrightarrow{\widthof{\,#1\text{-w.}\,}}\,}}}
\newcommand{\twobytwo}[4]%
  {\left(\begin{array}{cc} #1 & #2 \\ #3 & #4 \end{array}\right)}
\newcommand{\twovec}[2]%
  {\left({\begin{array}{c} #1\\#2 \end{array}}\right)}
\numberwithin{equation}{section}
\theoremstyle{plain}
\newtheorem{theorem}{Theorem}[section]
\newtheorem{lemma}[theorem]{Lemma}
\newtheorem{proposition}[theorem]{Proposition}
\newtheorem{corollary}[theorem]{Corollary}
\newtheorem{conjecture}[theorem]{Conjecture}
\theoremstyle{remark}
\newtheorem{remark}[theorem]{Remark}
\newtheorem{example}[theorem]{Example}
\renewenvironment{proof}{\noindent{\bf proof}\;\;}{\hfill$\Box$\par}
\begin{document}

\thispagestyle{empty}

\title{\vspace*{-9mm}\huge\rm 
  Criteria for posterior consistency\\and convergence at a rate}
\author{\sc B. J. K.~{Kleijn}$^{1}$ and
  Y. Y.~{Zhao}$^{2}$\footnote{Corresponding author: {\tt
      {yanyunzhao@znufe.edu.cn}} or {\tt {yyunzhao@gmail.com}}}\\
  {\small\slshape ${}^{1}$ Korteweg-de~Vries Institute for Mathematics,
    University of Amsterdam}\\
  {\small\slshape ${}^{2}$ Wenlan School of Business,
    Zhongnan University of Economics and Law}\\[2mm]
  }
\date{\sc May 2017}
\maketitle

\begin{abstract}
Frequentist conditions for asymptotic suitability of
Bayesian procedures focus on lower bounds for prior
mass in Kullback-Leibler neighbourhoods of the data
distribution. The goal of this paper is to investigate
the flexibility in criteria for posterior consistency
with \iid\ data. We formulate a versatile posterior
consistency theorem that applies both to well- and
mis-specified models and which we use to re-derive
Schwartz's theorem, consider Kullback-Leibler
consistency and formulate consistency theorems in
which priors charge metric balls. It is generalized
to sieved models with Barron's negligible prior mass
condition and to separable models with variations on
Walker's consistency theorem. Results also apply to
marginal semi-parametric consistency: support boundary
estimation is considered explicitly and consistency is
proved in a model for which Kullback-Leibler priors
do not exist. Other examples
include consistent density estimation in
mixture models with Dirichlet or Gibbs-type priors of
full weak support. Regarding posterior convergence at
a rate, it is shown that under a mild integrability
condition, the second-order Ghosal-Ghosh-van~der~Vaart
prior mass condition can be relaxed to a lower bound
to the prior mass in Schwartz's Kullback-Leibler
neighbourhoods. The posterior rate of convergence is
derived in a simple, parametric model for heavy-tailed
distributions in which the Ghosal-Ghosh-van~der~Vaart
condition cannot be satisfied by any prior.
\end{abstract}


\section{Introduction and main result}
\label{sec:intro}

Aside from computational issues, the most restrictive aspects of
non-parametric Bayesian methods result from limited availability of
priors. In general, distributions on infinite dimensional spaces are
relatively hard to define and control technically, so unnecessary
elimination of candidate priors is highly undesirable. Specifying
to frequentist asymptotic aspects, the \emph{conditions} that Bayesian
limit theorems pose on priors play a crucial role: it is the goal of
this paper to extend the range of criteria on the prior for posterior
consistency \cite{Ghosh99} and convergence at a rate \cite{Ghosal00},
showing asymptotic suitability for a wider range of priors. We accept
that this may go at the expense of additional model conditions.

\subsection{Introduction}

As early as the 1940's, J.~Doob \cite{Doob48} studied posterior
limits as a part of his exploits in martingale convergence: if
the data forms an infinite \iid\ sample from a distribution
$P_{\tht_0}$ on a measurable space $(\scrX,\scrA)$ in a model
$\scrP=\{P_\tht:\tht\in\Tht\}$ where $\Tht$ and sample space are
Polish spaces and $\Tht\rightarrow\scrP:\tht\mapsto P_\tht$ is
one-to-one, then for any prior $\Pi$ on $\Tht$ the posterior is
consistent, $\Pi$-almost-surely. Notwithstanding its remarkable
generality and its Bayesian interpretation,
Doob's theorem is not quite satisfactory to the frequentist
interested in non-parametric models, in that the null-set of
the prior on which inconsistency may occur can be very large,
as was stressed by Schwartz \cite{Schwartz61} and amplified
repeatedly by Freedman \cite{Freedman63}.

To frequentists Freedman's counterexamples discredited
Bayesian methods for non-parametric statistics greatly. The resulting
under-appreciation was hard to justify, given that a frequentist
alternative to Doob's theorem had existed since 1965: Schwartz's
consistency theorem \cite{Schwartz65} below concerns models $\scrP$
that are dominated by a $\sigma$-finite measure $\mu$ (with densities
$p=dP/d\mu$ for $P\in\scrP$) and departs from,
\begin{equation}
  \label{eq:posterior}
  \Pi(\,A\,|\,X_1,\ldots,X_n\,)= 
  {\displaystyle{\int_A\prod_{i=1}^n p(X_i)\,d\Pi(P)}} \biggm/
  {\displaystyle{\int_\scrP\prod_{i=1}^n p(X_i)\,d\Pi(P)}},
\end{equation}
the standard expression for the posterior in dominated models.
Note that Schwartz's
own formulation of the theorem assumes the existence of certain
test sequences, which exist without further assumptions if one is
interested in weak consistency, and can be constructed for
Hellinger consistency if the model is totally bounded with respect to
the Hellinger metric $H$.
The formulation chosen below focuses on the latter case. (Here
and below, let $N(\ep,\scrP,H)$ denote the radius-$\ep$ Hellinger
covering number $\scrP$. Hellinger totally boundedness of $\scrP$
corresponds to finiteness of $N(\ep,\scrP,H)$ for all $\ep>0$.)
\begin{theorem}
\label{thm:schwartz}
(Schwartz (1965))\\
Let the model $\scrP$ be totally bounded relative to the Hellinger
metric $H$ and let $X_1, X_2, \ldots$ be $\iid-P_0$ for some
$P_0\in\scrP$. If $\Pi$ is a Kullback-Leibler prior, \ie\ for all
$\delta>0$,  
\begin{equation}
  \label{eq:KLprior}
  \Pi\Bigl(\,P\in\scrP\,:\,
    -P_0\log\frac{dP}{dP_0}<\delta\,\Bigr) > 0,
\end{equation}
then the posterior is Hellinger consistent at $P_0$, that is,
\begin{equation}
  \label{eq:Schwartzconsistency}
  \Pi\bigl(\,P\in\scrP\,:\,H(P,P_0)>\ep\,\bigm|\,X_1,\ldots,X_n\,\bigr)
  \convas{P_0}0,
\end{equation}
for every $\ep>0$.
\end{theorem}
This type of formulation involves a specific underlying distribution
$P_0$ and thus avoids the possibility of non-validity on null-sets of
the prior. Schwartz's theorem does not cover all examples, however.
\begin{example}
\label{ex:noKLpriors}
Consider an \iid\ sample $X_1,X_2,\ldots$ from a distribution $P_0$
with Lebesgue density $p_0:\RR\rightarrow\RR$ that is supported on
an interval of known width (say, $1$)  but unknown location. The
model is parametrized in terms of a density $\eta$ supported on
$[0,1]$ with $\eta(x)>0$ for all $x\in[0,1]$ and a location
$\tht\in\RR$:
\[
  p_{\tht,\eta}(x) = \eta(x-\tht)\,1_{ [\tht,\tht+1] }(x).
\]
Note that if $\tht$ does not equal $\tht'$,
\[
  -P_{\tht,\eta}\log\frac{p_{\tht',\eta'}}{p_{\tht,\eta}} = \infty
\]
for all $\eta,\eta'$. Therefore Kullback-Leibler neighbourhoods
do not have any extent in the $\tht$-direction and no prior can be
a Kullback-Leibler prior in this model. (See
example~\ref{ex:fixedwidthdomain} for more.) 
\end{example}

Totally-boundedness of the model is a restrictive condition: in the
case of Schwartz's theorem that condition can be mitigated in several
distinct ways, for example by use of the so-called \emph{Le~Cam-dimension}
of the model \cite{LeCam73}. An extension for
non-totally-bounded models of a more Bayesian flavour is due to Barron
(see, for example, \cite{Barron99} and section~4.4.2 of \cite{Ghosh03}),
who demonstrates posterior consistency for Kullback-Leibler priors,
based on a partition of the model into a subset of bounded
Hellinger metric entropy and a subset of negligibly small prior mass.
More recently, Walker has proposed a method that does not
depend on finite covers but strengthens condition (\ref{eq:KLprior})
with a summability condition \cite{Walker04}. (For more, see
subsection~\ref{sub:walker}.)
\begin{theorem}
\label{thm:walker}
(Walker (2004))\\
Let the model $\scrP$ be Hellinger separable and let $X_1, X_2, \ldots$
be $\iid-P_0$ for some $P_0\in\scrP$. Let $\ep>0$ be given and let
$\{V_i:i\geq1\}$ be a countable cover of $\scrP$ by balls of a radius
$0<\delta<\ep$. If $\Pi$ is a Kullback-Leibler prior and in addition,
\begin{equation}
  \label{eq:walker}
  \sum_{i\geq1} \Pi(V_i)^{1/2} < \infty,
\end{equation}
then $\Pi(\,P\in\scrP\,:\,H(P,P_0)>\ep\,|\,X_1,\ldots,X_n\,)\convas{P_0}0$.
\end{theorem}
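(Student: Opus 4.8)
The strategy is the classical testing-plus-prior-mass decomposition, but with Walker's clever substitute for a global sieve: instead of testing $P_0$ against the single Hellinger ball $\{H(P,P_0)>\ep\}$ as a whole, we exploit the countable cover $\{V_i\}$ and the summability~(\ref{eq:walker}) to build tests ball-by-ball and control the total error by a convergent series. Writing $N_n=\int_\scrP\prod_{i=1}^n p(X_i)\,d\Pi(P)$ for the denominator and $D_n=\int_{\{H(P,P_0)>\ep\}}\prod_{i=1}^n (p/p_0)(X_i)\,d\Pi(P)$ for the (normalized) numerator, the posterior equals $D_n/(N_n/\!\int p_0^n)$, so it suffices to show (i)~$e^{n\delta}\!\int_\scrP\prod (p/p_0)(X_i)\,d\Pi(P)\to\infty$ $P_0$-a.s.\ for every $\delta>0$, which is the standard consequence of the Kullback--Leibler prior condition~(\ref{eq:KLprior}) via the law of large numbers and Fubini (Schwartz's lemma), and (ii)~$e^{n\delta}D_n\to 0$ $P_0$-a.s.\ for some $\delta>0$.

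For~(ii), first discard from the cover those $V_i$ meeting the $\ep$-ball around $P_0$: since the $V_i$ have radius $\delta<\ep$, any $V_i$ with $V_i\cap\{H(P,P_0)>\ep\}\neq\emptyset$ has every point at Hellinger distance at least $\ep-\delta>0$ from $P_0$; relabel the relevant sub-collection again as $\{V_i\}$. For each such $V_i$, pick a center $P_i$ and use the standard minimax/Le~Cam test: because $V_i$ is a Hellinger ball of radius $\delta$ and $H(P_i,P_0)\geq\ep-\delta$, there is a test $\phi_{n,i}$ for $P_0$ against $\{P\in V_i\}$ with $P_0^n\phi_{n,i}\leq e^{-nc}$ and $\sup_{P\in V_i}P^n(1-\phi_{n,i})\leq e^{-nc}$ for a constant $c=c(\ep,\delta)>0$ (Birgé--Le~Cam; here one may need $\delta$ bounded away from $\ep$, e.g.\ $\delta<\ep/2$, or a mild reparametrization — see the remark below). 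Now set $\phi_n=\sup_i\phi_{n,i}$. On the event $\{\phi_n=0\}$ one bounds $D_n\leq\sum_i\int_{V_i}\prod(p/p_0)(X_i)(1-\phi_{n,i})\,d\Pi(P)$, and taking $P_0^n$-expectation gives, via Fubini and the second test bound, $P_0^n[\mathbf{1}_{\{\phi_n=0\}}D_n]\leq\sum_i\Pi(V_i)e^{-nc}$. Crucially, Cauchy--Schwarz on each term — $P_0^n\int_{V_i}\prod(p/p_0)(1-\phi_{n,i})\,d\Pi \leq \Pi(V_i)^{1/2}\bigl(\int_{V_i}P^n(1-\phi_{n,i})\,d\Pi\bigr)^{1/2}\leq \Pi(V_i)^{1/2}\Pi(V_i)^{1/2}e^{-nc/2}$ — shows in fact $P_0^n[\mathbf{1}_{\{\phi_n=0\}}D_n]\leq e^{-nc/2}\sum_i\Pi(V_i)^{1/2}$, which is summable in $n$ by~(\ref{eq:walker}); and $P_0^n\phi_n\leq\sum_i P_0^n\phi_{n,i}\leq(\#\{V_i\})\cdot$… — wait, the index set may be infinite, so instead bound $P_0^n\phi_n\leq\sum_i\Pi(V_i)^{1/2}\bigl(P_0^n\phi_{n,i}\bigr)^{1/2}\leq e^{-nc/2}\sum_i\Pi(V_i)^{1/2}$ after noting $P_0^n\phi_{n,i}\le 1$ can be upgraded to $\Pi(V_i)^{1/2}e^{-nc/2}$ only if $\Pi(V_i)\le 1$ — which holds. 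Thus both $\sum_n P_0^n\phi_n<\infty$ and $\sum_n P_0^n[\mathbf{1}_{\{\phi_n=0\}}e^{n\delta}D_n]<\infty$ for $\delta<c/2$, so by Borel--Cantelli $\phi_n\to0$ a.s.\ and $e^{n\delta}D_n\to0$ a.s., giving~(ii).

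Combining (i) and (ii) with $\delta$ chosen small enough for both, the posterior mass $\Pi(H(P,P_0)>\ep\mid X_1,\ldots,X_n)\to0$ $P_0$-a.s. The main obstacle is step~(ii), specifically the construction and uniform exponential control of the tests $\phi_{n,i}$ on Hellinger balls of radius $\delta$ that may be almost as large as $\ep$: the Birgé--Le~Cam bound degrades as $\delta\uparrow\ep$, so one must either impose $\delta$ sufficiently small relative to $\ep$ (harmless, since shrinking $\delta$ only makes~(\ref{eq:walker}) harder but is allowed in the hypothesis as "some cover"), or — following Walker — test each $V_i$ not against $\{H(P,P_0)>\ep\}$ but directly against $P_0$ using the separation $H(P_i,P_0)\geq\ep-\delta$, and absorb the loss into the constant $c$. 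The other delicate point is measurability of $\phi_n=\sup_i\phi_{n,i}$ and of the map $P\mapsto\prod(p/p_0)(X_i)$ on the cover, which is where Hellinger \emph{separability} of $\scrP$ enters: it lets us take a countable dense sub-collection of centers so that all suprema and integrals are well-defined, exactly as in the statement's hypothesis.
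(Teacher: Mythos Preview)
The paper does not prove theorem~\ref{thm:walker} itself: it is quoted as Walker's result from \cite{Walker04}, and the paper instead derives \emph{variations} (theorem~\ref{thm:sep}, corollaries~\ref{cor:varwalker} and~\ref{cor:onlysum}) via its own minimax/Hellinger-transform machinery. So the relevant comparison is with the method behind those corollaries.

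Your numerator bound and the denominator step~(i) are fine, but there is a genuine gap in your Type~I error control. You correctly notice (``wait, the index set may be infinite'') that $P_0^n\phi_n\leq\sum_i P_0^n\phi_{n,i}\leq\sum_i e^{-nc}$ is useless over a countable cover, but the fix you propose,
\[
  P_0^n\phi_n\;\leq\;\sum_i\Pi(V_i)^{1/2}\bigl(P_0^n\phi_{n,i}\bigr)^{1/2},
\]
is simply not a valid inequality: there is no mechanism by which the prior mass $\Pi(V_i)$ enters a bound on $P_0^n\phi_{n,i}$ when $\phi_{n,i}$ is a Birg\'e--Le~Cam test constructed from the Hellinger geometry alone. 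Your tests know nothing about $\Pi$, so their Type~I errors cannot be weighted by $\Pi(V_i)^{1/2}$ after the fact. (The ``Cauchy--Schwarz'' step for the numerator is also unnecessarily convoluted---the direct bound $\int_{V_i}P^n(1-\phi_{n,i})\,d\Pi\leq\Pi(V_i)e^{-nc}$ already suffices there---but at least it is not wrong.)

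This is precisely the difficulty the paper's framework is designed to address. In the proof of theorem~\ref{thm:sep}, the minimax test is chosen with the prior mass \emph{built into the threshold}:
\[
  \phi=1\Bigl\{\Pi(V_i)\,\frac{dP^n}{dP_n^{\Pi}}>1\Bigr\},
\]
so that the resulting Hellinger-transform bound carries a factor $\Pi(V_i)^{\al}$ automatically and symmetrically in both error types; see (\ref{eq:prewalker}). Fixing $\al=1/2$ then yields the $\Pi(V_i)^{1/2}$ summability exactly where it is needed (corollary~\ref{cor:varwalker}). Walker's original proof \cite{Walker04} sidesteps the problem differently, via martingale arguments for restricted predictive densities rather than tests. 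Either way, the missing idea in your plan is that the prior must enter the \emph{construction} of the $i$-th test, not be grafted onto its error bound afterwards.
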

It appears that, thus far, no clear relationship between Schwartz's
and Walker's theorems has been established. Particularly, while
Schwartz's theorem poses only a \emph{lower bound} for prior mass
(around $P_0$), Walker's theorem also requires an \emph{upper bound}
on prior mass (further away from $P_0$), suggesting that
theorems~\ref{thm:schwartz} and~\ref{thm:walker} differ materially
rather than superficially.

Another significant extension of the theory on posterior convergence
is formed by results concerning posterior convergence \emph{at a rate}.
Extension of Schwartz's theorem to posterior rates of convergence
\cite{Ghosal00,Shen01} applies Barron's sieve idea and a more intricate
minimax argument \cite{Birge83,Birge84} to a shrinking sequence of
Hellinger neighbourhoods and employs a more specific, rate-related
version of the Kullback-Leibler condition (\ref{eq:KLprior}) for
the prior. The preferred formulation takes the following form.
\begin{theorem}
\label{thm:GGV} {\it (Ghosal, Ghosh and van der Vaart, 2000)}\\
Let $X_1, X_2, \ldots$ be $\iid-P_0$ for some $P_0$ in the model
$\scrP$ which we endow with the Hellinger metric $H$. Let
$(\ep_n)$ be a sequence with $\ep_n\downarrow 0$ and
$n\ep_n^2\rightarrow\infty$. Let $C>0$ and measurable
$\scrP_n\subset\scrP$ be such that, for large enough $n$,
\begin{itemize}
\item[(i)] the $\scrP_n$ are of bounded $H$-entropy:
  $N(\ep_n,\scrP_n,H)\leq e^{n\ep_n^2}$;
\item[(ii)] the prior mass outside $\scrP_n$ is bounded:
  $\Pi(\scrP\setminus\scrP_n)\leq e^{-n\ep_n^2(C+4)}$;
\item[(iii)] the prior $\Pi$ is such that,
  \begin{equation}
    \label{eq:GGV}
    \Pi\Bigl(\,P\in\scrP\,:\,-P_0\log\frac{dP}{dP_0}<\ep_n^2,\,
      P_0\Bigl( \log\frac{dP}{dP_0} \Bigr)^2<\ep_n^2\, \Bigr)\geq e^{-Cn\ep_n^2}.
  \end{equation}
\end{itemize}
Then the posterior converges in Hellinger distance at rate $\ep_n$, \ie
\[
  \Pi\bigl(\,P\in\scrP:\,H(P,P_0)>M\ep_n\,\bigm|\,X_1,\ldots,X_n\,\bigr)
  \convprob{P_0}0,
\]
for all $M>0$ that are sufficiently large.
\end{theorem}

Aside from examples like \ref{ex:noKLpriors}, there are straightforward
circumstances in which condition (\ref{eq:GGV}) cannot be satisfied
by any prior. In the example below, heavy-tailed distributions are
found for which integrability of squared log-density ratios is
violated.
\begin{example}
\label{ex:noGGVpriors}
Consider an \iid\ sample of integers $X_1, X_2,\ldots$ from a
distribution $P_a$, $(a\geq1)$, defined by,
\begin{equation}
  \label{eq:heavytail}
  p_a(k) = P_a(X=k) = \frac{1}{Z_a}\frac{1}{k^a(\log k)^3}
\end{equation}
for all $k\geq2$, with $Z_a=\sum_{k\geq2} k^{-a}(\log k)^{-3}<\infty$.
As it turns out, for $a=1$, $b>1$,
\[
  -P_a\log\frac{p_b}{p_a}<\infty, \quad
  P_a\Bigl(\log\frac{p_b}{p_a}\Bigr)^2=\infty.
\]
Therefore, Schwartz's KL-condition (\ref{eq:KLprior}) for the prior
for the parameter $a$ can be satisfied but there exists no prior such
that (\ref{eq:GGV}) is satisfied for all $P_0$ in the model. (See
example~\ref{ex:heavytail} for more.) 

In fact,
if we change the third power of the $\log$-factor in the denominator of
(\ref{eq:heavytail}) to a square, Schwartz's KL-priors also do not
exist. The above construction is indicative of a more general
problem: for any $P_0$ it is possible to find distributions $P$
with densities $p$ that are `wild enough' to cause log-likelihood
ratios $\log p/p_0$ to loose integrability or square-integrability.
Instances of posterior inconsistency \cite{Barron99,Diaconis86} and
the phenomenon of \emph{data-tracking} \cite{Walker05} sketch a
similar qualitative picture of situations where posterior consistency
fails.
\end{example}

Schwartz's theorem and its rate-specific version have become the
standard frequentist tools for the asymptotic analysis of Bayesian
posteriors, almost to the point of exclusivity. As a consequence, lower
bounds for prior mass in Kullback-Leibler neighbourhoods \cf\
(\ref{eq:KLprior}) and (\ref{eq:GGV}) are virtually the {\em only}
criteria frequentists apply to priors in non-parametric
asymptotic analyses (notable exceptions are made in the
examples of \cite{Castillo14,Castillo15,Hoffman15}; see,
however, lemma~\ref{lem:KLequiv} below). Since these
lower bounds on prior weights of Kullback-Leibler-neighbourhoods
are sufficient conditions applicable for \iid\ data, it is not clear
if other criteria for the prior can be formulated.
The goal of this paper is to investigate whether flexibility can be
gained with regard to criteria for prior choice, with the ultimate goal
of formulating new consistency theorems based on a greater variety of
suitability conditions for priors. The goal is \emph{not} to generalize
conditions of Schwartz's theorem or to sharpen its assertion; rather
we want to show that stringency with regard to the prior can be relaxed
at the expense of stringency with regard to conditions on the model.

\subsection{Main result}

The main result is summarized in the next theorem: we have in
mind a fixed model subset $V$ for which we want to demonstrate
asymptotically vanishing posterior mass. Although suitable also for
hypothesis testing in principle, our main interest lies with the
situation where $V$ is the complement of an open neighbourhood of
$P_0$. Following the ideas of \cite{Schwartz65,LeCam73,Birge83,Birge84}
the set $V$ is covered by a finite collection of subsets $V_1,\ldots,V_N$
to be tested against $P_0$ separately with the help of the minimax
theorem. However, here, we involve the prior in the minimax problem
from the start: each $V_i$ is matched with a model subset
$B_i$ (which can be thought of as a `neighbourhood' of $P_0$ if the
model is well-specified) such that $\Pi(B_i)>0$ and inequality
(\ref{eq:power}) below is satisfied. It will be shown
that the $B_i$ can often be chosen as Kullback-Leibler neighbourhoods
(as in Schwartz's theorem), but alternative choices for the $B_i$ become
possible as well.

Throughout this paper and in the formulation below, we assume that
the model is dominated and we use posterior (\ref{eq:posterior}).
Let $\co{V}$ denote the convex hull of $V$ and let $P_n^\Pi$ ($n\geq1$),
denote the $n$-fold prior predictive distributions:
$P_n^\Pi(A)=\int P^n(A)\,d\Pi(P)$, for all $A\in\sigma(X_1,\ldots,X_n)$.
Furthermore, for given $\al\in[0,1]$, model subsets $B,W$ and a given
distribution $P_0$, define,
\begin{equation}
  \label{eq:testingpower}
  \pi_{P_0}(W,B;\al)
    = \sup_{P\in W} \, \sup_{Q\in B} \, P_0\Bigl(\frac{dP}{dQ}\Bigr)^\al,
\end{equation}
related to the Hellinger transform (see appendix~\ref{app:hell}).
Also define $\pi_{P_0}(W,B)$ to be equal to $\inf_{\al\in[0,1]}\pi_{P_0}(W,B;\al)$.
\begin{theorem}
\label{thm:main}
Let the model $\scrP$ be given and let $X_1, X_2, \ldots$ be
\iid-$P_0$ distributed. Assume that $P_0^n\ll P_n^\Pi$ for all $n\geq1$.
For some $N\geq1$ let $V_1,\ldots,V_N$ be measurable model subsets. If there
exist measurable model subsets $B_1,\ldots, B_N$ such that for every
$1\leq i\leq N$,
\begin{equation}
  \label{eq:power}
      \pi_{P_0}(\,\co{V_i},\,B_i\,) < 1,
\end{equation}
$\Pi(B_i)>0$ and $\sup_{Q\in B_i} P_0(dP/dQ)<\infty$ for all $P\in V_i$,
then,
\begin{equation}
  \label{eq:consistency}
  \Pi(\,V\,|\,X_1,\ldots,X_n\,)\convas{P_0}0,
\end{equation}
for any $V\subset\bigcup_{1\leq i\leq N} V_i$.
\end{theorem}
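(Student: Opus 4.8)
The plan is to reduce (\ref{eq:consistency}) to a likelihood-ratio estimate for each matched pair $(V_i,B_i)$ and then to insert a test, built from the Hellinger transform $\pi_{P_0}(\co{V_i},B_i)$ by a minimax argument over the convex hull $\co{V_i}$, that forces the posterior mass of $V_i$ to decay exponentially. Since $V\subset\bigcup_{i\le N}V_i$ gives $\Pi(V\mid X_1,\ldots,X_n)\le\sum_{i\le N}\Pi(V_i\mid X_1,\ldots,X_n)$, it suffices to prove (\ref{eq:consistency}) with $V$ replaced by each $V_i$; and if $\Pi(V_i)=0$ the numerator of (\ref{eq:posterior}) vanishes identically and the claim is trivial, so assume $\Pi(V_i)>0$. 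The hypothesis $P_0^n\ll P_n^\Pi$ makes the denominator of (\ref{eq:posterior}) strictly positive, $P_0^n$-a.s., so the posterior is well-defined; dividing numerator and denominator by $\prod_{j\le n}p_0(X_j)$ and keeping only $B_i$ in the denominator ($B_i\subset\scrP$, nonnegative integrand) gives the bound
\[
  \Pi(V_i\mid X_1,\ldots,X_n)\ \le\ R_n^{(i)}\ :=\
  \frac{\int_{V_i}\prod_{j\le n}\tfrac{dP}{dP_0}(X_j)\,d\Pi(P)}
       {\int_{B_i}\prod_{j\le n}\tfrac{dQ}{dP_0}(X_j)\,d\Pi(Q)}\,,
\]
so it is enough to show $R_n^{(i)}\convas{P_0}0$ for every $i$.

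\emph{The test.} Since $\pi_{P_0}(\co{V_i},B_i)<1$ we may choose $\al_i\in[0,1]$ and $\eta_i<1$ with $P_0(dP/dQ)^{\al_i}\le\eta_i$ for all $P\in\co{V_i}$, $Q\in B_i$; for an \iid\ sample this reads $P_0^n\bigl[\bigl(\prod_{j\le n}(dP/dQ)(X_j)\bigr)^{\al_i}\bigr]\le\eta_i^n$, uniformly over $P\in\co{V_i}$ and $Q\in B_i$ (the condition $\sup_{Q\in B_i}P_0(dP/dQ)<\infty$ makes these quantities finite and the convex-hull suprema well-behaved). The crucial step is to convert these \emph{pointwise} moment bounds into a single test $\phi_n=\phi_n(X_1,\ldots,X_n)$: this is where convexity of $\co{V_i}$ enters, through a Le~Cam--Birg\'e type minimax inequality (appendix~\ref{app:hell}) applied to the $n$-fold problem, producing $\phi_n\in[0,1]$ and constants $C<\infty$, $\kappa_i\in(0,1)$ with $P_0^n\phi_n\le C\kappa_i^n$ for all $n$ and such that $R_n^{(i)}$ is small on the set $\{\phi_n=0\}$. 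Then $\sum_n P_0^n\phi_n<\infty$, and Borel--Cantelli yields $\phi_n\convas{P_0}0$.

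\emph{Conclusion.} Write $R_n^{(i)}\le\phi_n+(1-\phi_n)R_n^{(i)}$; the first term vanishes $P_0$-a.s.\ by the previous step. For the second, Tonelli's theorem gives
\[
  P_0^n\Bigl[(1-\phi_n)\int_{V_i}\prod_{j\le n}\tfrac{dP}{dP_0}(X_j)\,d\Pi(P)\Bigr]
  \ =\ \int_{V_i}P^n(1-\phi_n)\,d\Pi(P)\ \le\ C\kappa_i^n\,\Pi(V_i),
\]
so by Markov and Borel--Cantelli the numerator of $R_n^{(i)}$, multiplied by $1-\phi_n$, is $P_0$-a.s.\ at most $\beta^n$ for all large $n$, where $\beta$ is any number slightly larger than $\kappa_i$. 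A matching lower bound $\gamma^n$ with $\gamma>\beta$ for the denominator $\int_{B_i}\prod_{j\le n}\tfrac{dQ}{dP_0}(X_j)\,d\Pi(Q)$ should follow from $\Pi(B_i)>0$ and the finiteness condition — \eg\ via Jensen's inequality and the strong law applied to $x\mapsto\int_{B_i}\log\tfrac{dQ}{dP_0}(x)\,d\Pi(Q)$ — whence $(1-\phi_n)R_n^{(i)}\le(\beta/\gamma)^n$ for all large $n$ and $R_n^{(i)}\convas{P_0}0$.

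\emph{Main obstacle.} The real work is the minimax step: turning the single-observation Hellinger-transform inequality $\pi_{P_0}(\co{V_i},B_i)<1$ into a genuine $n$-sample test with exponentially small $P_0$-error and a usable bound on $R_n^{(i)}$ — this is exactly where convexity of $\co{V_i}$ is indispensable and where a naive moment computation on the mixture $\int_{V_i}P^n\,d\Pi$ breaks down. A subsidiary but genuine point is to verify that the hypotheses keep the denominator rate $\gamma$ strictly above the numerator rate $\beta$, so that the quotient indeed vanishes.
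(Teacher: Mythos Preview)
Your decomposition is the classical Schwartz split --- numerator versus denominator --- and that is precisely what the paper's argument is designed to \emph{avoid}. The paper's own comment after lemma~\ref{lem:cons} is explicit: ``posterior numerator and denominator are dealt with simultaneously rather than separately.'' That is not a stylistic choice; with the hypotheses of theorem~\ref{thm:main} the separate treatment does not go through.

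\textbf{The test.} You claim a test with $P_0^n\phi_n\le C\kappa_i^n$ \emph{and} $\sup_{P\in V_i}P^n(1-\phi_n)\le C\kappa_i^n$, obtained from $\pi_{P_0}(\co{V_i},B_i)<1$ via a Le~Cam--Birg\'e minimax step. But the hypothesis controls $P_0(dP/dQ)^{\al}$ for $Q\in B_i$, not the Hellinger affinity $\rho_{\al}(P_0,P)=\int p_0^{1-\al}p^{\al}\,d\mu$. The standard minimax test between $P_0^n$ and $\co{V_i^n}$ has error governed by $\sup_{P\in\co{V_i}}\inf_{\al}\rho_\al(P_0,P)$, and nothing in the assumptions bounds that quantity below one (the theorem does not even require $P_0\in B_i$, or $P_0\in\scrP$). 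So your passage from the single-observation bound to the claimed type-II error $P^n(1-\phi_n)\le C\kappa_i^n$ has no justification.

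\textbf{The denominator.} You write that a lower bound $\gamma^n$ ``should follow from $\Pi(B_i)>0$ and the finiteness condition''. But the finiteness condition is $\sup_{Q\in B_i}P_0(dP/dQ)<\infty$ for $P\in V_i$; it says nothing about $-P_0\log(dQ/dP_0)$ for $Q\in B_i$, which is what a Jensen/SLLN argument would need. Without Kullback--Leibler control on $B_i$ there is no Schwartz-type denominator bound, and even if one invokes the equivalence of lemma~\ref{lem:KLequiv} (which itself needs $P_0\in B_i$), you still have to check that the resulting $\gamma$ beats your $\beta$ --- a rate comparison you flag but do not carry out, and which in fact need not hold.

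\textbf{What the paper does instead.} The paper never forms the ratio $R_n^{(i)}$. It bounds $P_0^n\Pi(V_i\mid X^n)$ directly by $P_0^n\phi_{i,n}+\sup_{P\in V_i}P_0^n\bigl[\tfrac{dP^n}{dP_n^\Pi}(1-\phi_{i,n})\bigr]$ (lemma~\ref{lem:cons}); the minimax test of lemma~\ref{lem:ntests} then controls this combined quantity by $\sup_{P_n\in\co{V_i^n}}\inf_\al P_0^n(dP_n/dP_n^\Pi)^\al$. Only now does $B_i$ enter, via $P_n^\Pi\ge\Pi(B_i)\,P_n^{\Pi|B_i}$ and Jensen for $x\mapsto x^{-\al}$ (lemma~\ref{lem:HTBnd}), yielding the bound $\Pi(B_i)^{-\al}\bigl[\sup_{P\in\co{V_i}}\sup_{Q\in B_i}P_0(dP/dQ)^\al\bigr]^n$. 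This is exactly the quantity the hypothesis makes strictly less than one, and the exponential decay follows with no separate denominator estimate and no rate matching.
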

Although this angle will not be pursued further in this paper, it is noted
that $P_0$ is \emph{not required} to be in the model $\scrP$ so that the
theorem applies both to well- and to mis-specified models \cite{Kleijn06}
in the form stated. Furthermore, in subsection~\ref{sub:KLpriors} it is
shown that condition (\ref{eq:power}) is \emph{equivalent} in quite some
generality to separation of $B_i$ and $\co{V_i}$ in Kullback-Leibler
divergence with respect to $P_0$,
\begin{equation}
  \sup_{Q\in B_i}-P_0\log\frac{dQ}{dP_0}
    < \vinf_{\phantom{|}P\in \co{V_i}}-P_0\log\frac{dP}{dP_0},
\end{equation}
underlining the fundamental nature of condition (\ref{eq:KLprior}). 
But even if we keep this equivalence in mind, it may be possible
to formulate less demanding criteria for the choice of the prior at 
the expense of more stringent model conditions: the theorem
is uncommitted regarding the nature of the $V_i$, and, more
importantly, we may use any $B_i$ that
\emph{(i)} allow uniform control of $P_0(p/q)^\al$, and \emph{(ii)} allow
convenient choice of a prior such that $\Pi(B_i)>0$. The two requirements
on $B_i$ leave room for trade-offs between being `small enough' to
satisfy \emph{(i)}, but `large enough' to enable a choice for $\Pi$
that leads to \emph{(ii)}. The freedom to choose $B$'s and $\Pi$ lends
the method the desired flexibility: given $\scrP$ and $V$, can we
find $V_i$'s, $B_i$'s and a prior $\Pi$ like above?

In what follows it is shown that Schwartz's theorem, Barron's sieve
generalization, Walker's theorem and posterior rates of convergence
\cf\ Ghosal-Ghosh-van~der~Vaart can all be related to
theorem~\ref{thm:main}. In section~\ref{sec:cons}, the denominator of
expression (\ref{eq:posterior}) is considered in detail and
theorem~\ref{thm:main} is proved. In section~\ref{sec:variations} we
establish that condition (\ref{eq:power}) is equivalent to
KL-separation. Based on that,
Schwartz's theorem is re-derived and several variations are considered,
\eg\ posterior consistency in Kullback-Leibler divergence with a prior
satisfying (\ref{eq:KLprior}) and Hellinger consistency with priors that
charge metric balls. In section~\ref{sec:sep} it is shown that the
totally-boundedness condition is not essential, in two distinct ways:
firstly we give a version of the theorem involving a sieve of
submodels with finite covers, whose complements satisfy Barron's
negligible prior mass condition. Secondly, we consider variations on
Walker's theorem to guarantee Hellinger consistency with
Kullback-Leibler priors that satisfy certain summability conditions.
In section~\ref{sec:rates} we consider posterior rates of convergence
and show that the second-order KL-condition on the prior of
(\ref{eq:GGV}) can be replaced by a rate-specific version of
Schwartz's KL-condition (\ref{eq:KLprior}): for some $K>0$,
\[
  \Pi\Bigl(\,P\in\scrP\,:\,-P_0\log\frac{dP}{dP_0}<\ep_n^2\,\Bigr)
    \geq e^{-K n\ep_n^2},
\]
under a mild integrability condition on the model.

To apply the results and demonstrate that proposed methods allow
for considerable flexibility, section~\ref{sec:boundary} concerns
semi-parametric estimation of support boundary points for a density
on a bounded interval in $\RR$ \cite{Kleijn15}. The last section
contains a short discussion on applications, including consistency
in non-parametric density estimation with various Dirichlet mixtures,
and the difficult examples~\ref{ex:noKLpriors} and~\ref{ex:noGGVpriors}.
We conclude with two appendices, one on the Hellinger
transform and another containing proofs.

\subsection*{Two notes on supports}
Below, the focus is on expectations of the form $P_0(p/q)^\al$
where $p$ and $q$ are probability densities and $P_0$ is the
marginal for the \iid\ sample. Because the proof of lemma~\ref{lem:cons}
is in $P_0$-expectation, an indicator $1_{\{p_0>0\}}(x)$ is implicit
in all calculations that follow. Because of (\ref{eq:posterior})
and because we look at moments of $p/q$, an indicator $1_{\{p>0\}}(x)$
can also be thought of as a factor in the integrand. Because we require
finiteness of $P_0(p/q)$, $q>0$ is implicit whenever $p_0>0$ and
$p>0$, so in expressions of this form an indicator $1_{\{q>0\}}(x)$
may also be thought of as implicit. Secondly, to avoid confusion, we
say that \emph{$P$ lies in the support of a measure $\Pi$} if $\Pi(U)>0$
for all neighbourhoods $U$ of $P$.


\section{Posterior consistency}
\label{sec:cons}

To establish the basics, the model $(\scrP,\scrB)$ is a
measurable space consisting of Markov kernels $P$ on a sample space
$(\scrX,\scrA)$: the map $A\mapsto P(A)$ is a probability measure for
every $P\in\scrP$ and the map $P\mapsto P(A)$ is measurable for
every $A\in\scrA$. Assuming the model is dominated by a $\sigma$-finite
measure (with density $p$ for $P\in\scrP$), a \emph{prior} probability
measure $\Pi$ on $(\scrP,\scrB)$ gives rise
to the \emph{posterior} \cf\ (\ref{eq:posterior}), which is a Markov
kernel from $(\scrX^n,\scrA^n)$ into $(\scrP,\scrB)$. We
take the frequentist \iid\ perspective, \ie\ we assume that there exists
a distribution $P_0$ on $(\scrX,\scrA)$ such that 
$(X_1,\ldots, X_n)\sim P_0^n$. As a consequence 
expression~(\ref{eq:posterior}) does not make sense automatically:
for the denominator to be non-zero with $P_0^n$-probability one, we
impose that,
\begin{equation}
\label{eq:dompriorpred}
  P_0^n \ll P_{n}^{\Pi},
\end{equation}
for every $n\geq1$, where $P_n^{\Pi}$ is the prior predictive
distribution. If (\ref{eq:dompriorpred}) is not satisfied, it is
possible that expression (\ref{eq:posterior}) for the posterior is
ill-defined for infinitely many $n\geq1$ with $P_0^\infty$-probability
one. The following lemma provides a sufficient condition for
(\ref{eq:dompriorpred}).
\begin{proposition}
\label{prop:dompriorpred}
If $P_0$ lies in the Hellinger support of the prior $\Pi$, then
$P_0^n \ll P_{n}^{\Pi}$, for all $n\geq1$.
\end{proposition}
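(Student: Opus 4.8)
The plan is to argue by contradiction, exploiting the fact that the Hellinger affinity of product measures factorises. Fix $n\geq1$ and let $A\in\scrA^n$ satisfy $P_n^\Pi(A)=0$; what must be shown is that $P_0^n(A)=0$. Since $P_n^\Pi(A)=\int P^n(A)\,d\Pi(P)$ and the integrand is non-negative, $P_n^\Pi(A)=0$ holds if and only if $P^n(A)=0$ for $\Pi$-almost-every $P$; equivalently, the set $\{P\in\scrP:P^n(A)>0\}$ has $\Pi$-measure zero.

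I would then invoke two elementary metric facts. Writing $\rho(P,Q)=\int\sqrt{pq}\,d\mu$ for the Hellinger affinity, it decreases in $H(P,Q)$ and is multiplicative over products, $\rho(P^n,P_0^n)=\rho(P,P_0)^n$ (see appendix~\ref{app:hell}); hence, for $n$ fixed, $H(P^n,P_0^n)\to0$ as $H(P,P_0)\to0$. Secondly, total variation is controlled by the Hellinger distance: $\sup_{A\in\scrA^n}|P^n(A)-P_0^n(A)|\leq\sqrt2\,H(P^n,P_0^n)$.

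Now suppose, toward a contradiction, that $c:=P_0^n(A)>0$. By the two facts above one can choose $\ep>0$ small enough that $H(P,P_0)<\ep$ forces $\sqrt2\,H(P^n,P_0^n)<c$, and therefore $P^n(A)\geq P_0^n(A)-\sqrt2\,H(P^n,P_0^n)>0$. Consequently $\{P\in\scrP:H(P,P_0)<\ep\}\subset\{P\in\scrP:P^n(A)>0\}$. But $P_0$ lies in the Hellinger support of $\Pi$, so $\Pi(\{P:H(P,P_0)<\ep\})>0$, which contradicts the conclusion of the first paragraph. Hence $P_0^n(A)=0$; since $A\in\scrA^n$ and $n\geq1$ were arbitrary, this establishes (\ref{eq:dompriorpred}).

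The only genuinely delicate point is the measurability of $P\mapsto P^n(A)$ for $A\in\scrA^n$, needed both for $\{P:P^n(A)>0\}$ to be an event and for the identity $P_n^\Pi(A)=\int P^n(A)\,d\Pi(P)$ to make sense; this follows from the standing assumption that $P\mapsto P(A)$ is measurable on $(\scrP,\scrB)$ for every $A\in\scrA$, extended to the product $\sigma$-algebra by a monotone-class argument. Beyond that, the proof is just an assembly of the product identity for the affinity and the Hellinger--total-variation inequality, so I do not anticipate a real obstacle.
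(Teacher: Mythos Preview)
Your proof is correct and follows essentially the same route as the paper's: both arguments start from the observation that $P_n^\Pi(A)=0$ forces $P^n(A)=0$ for $\Pi$-almost-every $P$, then use that Hellinger closeness of $P$ to $P_0$ implies total-variational closeness of $P^n$ to $P_0^n$ (for fixed $n$), together with the Hellinger-support hypothesis, to conclude $P_0^n(A)=0$. The paper phrases this as a direct $\ep$-argument via the bound $P_0^n(A)\leq\int P^n(A)\,d\Pi(P|U')+\sup_{P\in U'}|P^n(A)-P_0^n(A)|$ with $U'$ a small neighbourhood, whereas you argue by contradiction and make the affinity factorisation $\rho(P^n,P_0^n)=\rho(P,P_0)^n$ explicit; these are only stylistic differences.
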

Another way to satisfy (\ref{eq:dompriorpred}) arises as an implication
of Barron's notion of \emph{matching} \cite{Barron86}: given (a sequence of
dominating measures $(\mu_n)$ and) a sequence of $\mu_n$-probability
densities $(f_n)$, another such sequence $(g_n)$ is said to
\emph{match} $(f_n)$, if there exists a constant $c>0$ such that for all
$n$ large enough,
\begin{equation}
  \label{eq:matching}
  e^{-nc}\,g_n(X_1,\ldots,X_n) \leq f_n(X_1,\ldots,X_n)
    \leq e^{nc}\,g_n(X_1,\ldots,X_n),
\end{equation}
almost-surely, for $(X_1,\ldots,X_n)$ distributed \cf\ the density $f_n$.
Following Barron we associate $f_n$ and $g_n$ with (densities for)
$P_0^n$ and $P^{\Pi}_n$, noting that \emph{matching} for the two implies that
domination condition (\ref{eq:dompriorpred}) is satisfied. Matching of
$P^{\Pi}_n$ with $P_0^n$ also arises as the
central lower-bound for the denominator of the posterior in proofs of
Schwartz's theorem (see \eg\ inequalities~(6) in \cite{Schwartz65}),
so the following corollary does not come as a great surprise.
\begin{corollary}
\label{cor:KLpriorpred}
If $\Pi$ is a KL prior, then $P_0^n\ll P_{n}^{\Pi}$ for
all $n\geq1$.
\end{corollary}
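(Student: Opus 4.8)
The plan is to produce, for each fixed $n\geq1$, an explicit lower bound for the $\mu^n$-density of the prior predictive $P_n^\Pi$ which is strictly positive $P_0^n$-almost everywhere; this forces $P_0^n\ll P_n^\Pi$ and is precisely a bound of the matching form (\ref{eq:matching}), which is why the corollary belongs right here. A quicker but less instructive route is also available: since $H^2(P,P_0)\leq -P_0\log\frac{dP}{dP_0}$, every Kullback-Leibler neighbourhood as in (\ref{eq:KLprior}) is contained in a Hellinger ball around $P_0$, so a Kullback-Leibler prior charges every Hellinger neighbourhood of $P_0$, \ie\ $P_0$ lies in the Hellinger support of $\Pi$, and proposition~\ref{prop:dompriorpred} applies verbatim. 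I will note this, but prefer the direct argument, since it displays the matching bound referred to above.

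Write $p=dP/d\mu$, $p_0=dP_0/d\mu$. Then $P_n^\Pi$ has $\mu^n$-density $(x_1,\ldots,x_n)\mapsto\int\prod_{i=1}^n p(x_i)\,d\Pi(P)$, and on the $P_0^n$-almost-sure event $\{p_0(X_i)>0,\ i=1,\ldots,n\}$ this density equals $\prod_{i=1}^n p_0(X_i)\cdot D_n$ with $D_n=\int\prod_{i=1}^n(p/p_0)(X_i)\,d\Pi(P)$; hence it suffices to show $D_n>0$, $P_0^n$-almost surely. Fix any $\delta>0$, put $B_\delta=\{P:-P_0\log\frac{dP}{dP_0}<\delta\}$ (so $\Pi(B_\delta)>0$ by (\ref{eq:KLprior})), and let $\Pi_\delta=\Pi(\,\cdot\cap B_\delta)/\Pi(B_\delta)$. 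Restricting the integral to $B_\delta$ and applying Jensen's inequality to $\exp$ under the probability measure $\Pi_\delta$,
\[
  D_n\ \geq\ \Pi(B_\delta)\int_{B_\delta}\prod_{i=1}^n\frac{p(X_i)}{p_0(X_i)}\,d\Pi_\delta(P)
  \ \geq\ \Pi(B_\delta)\,\exp\Bigl(\sum_{i=1}^n W_i\Bigr),\qquad
  W_i:=\int_{B_\delta}\log\frac{p(X_i)}{p_0(X_i)}\,d\Pi_\delta(P).
\]
The $W_i$ form an \iid\ sequence under $P_0$, so the proof reduces to checking that $W_1$ is $P_0$-almost surely finite. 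For the positive part, $\log^+t\leq t$ and $P_0(p/p_0)\leq1$ (the indicator $1_{\{p_0>0\}}$ being implicit, as in the notes on supports) give, by Tonelli, $P_0\int_{B_\delta}(\log\frac{p}{p_0})^+\,d\Pi_\delta\leq1$, so $W_1<\infty$ a.s.; for the negative part, Tonelli together with $-P_0\log\frac{dP}{dP_0}<\delta$ on $B_\delta$ gives $P_0\int_{B_\delta}(\log\frac{p_0}{p})^+\,d\Pi_\delta\leq\delta+1<\infty$, so $W_1>-\infty$ a.s. Hence $\sum_{i=1}^n W_i$ is finite $P_0^n$-a.s., so $D_n>0$ $P_0^n$-a.s., and therefore the $\mu^n$-density of $P_n^\Pi$ is strictly positive on a set of full $P_0^n$-measure; this yields $P_0^n\ll P_n^\Pi$.

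I expect the only genuinely delicate point to be the almost-sure finiteness of $W_1$, and in particular the exclusion of $W_1=-\infty$: this is exactly where the Kullback-Leibler bound on $B_\delta$ enters, and where one must be careful to phrase the estimates in terms of nonnegative integrands before invoking Tonelli. The remaining ingredients — the density representation of $P_n^\Pi$, the Jensen step, and the \iid\ structure of the $W_i$ — are routine bookkeeping. As a byproduct, taking $\delta<c$ and applying the strong law of large numbers one obtains $n^{-1}\sum_{i=1}^n W_i\to-\int_{B_\delta}\bigl(-P_0\log\frac{dP}{dP_0}\bigr)\,d\Pi_\delta>-\delta>-c$, whence $e^{nc}D_n\to\infty$ $P_0^\infty$-almost surely; together with the elementary bound $P_0^n D_n\leq1$ this recovers the matching relation (\ref{eq:matching}) of $P_n^\Pi$ with $P_0^n$, which explains the remark preceding the corollary.
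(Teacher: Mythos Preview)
Your proposal is correct. The ``quicker but less instructive route'' you mention in passing is in fact the paper's entire proof: it simply uses $-P_0\log(dP/dP_0)\geq H^2(P_0,P)$ to conclude that every Hellinger ball contains a Kullback--Leibler neighbourhood, so a Kullback--Leibler prior places $P_0$ in its Hellinger support and proposition~\ref{prop:dompriorpred} applies. That is two lines.

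Your preferred direct argument is genuinely different: it is the classical Schwartz lower bound for the denominator, restricting the prior to $B_\delta$, applying Jensen to pull the logarithm inside, and controlling the resulting averaged log-likelihood via Tonelli and the Kullback--Leibler bound on $B_\delta$. The steps are sound, including the care you take with the positive and negative parts of $W_1$ before invoking Fubini/Tonelli. What this longer route buys is exactly what you advertise: it exhibits the matching inequality (\ref{eq:matching}) explicitly and makes the connection to Barron's notion transparent, whereas the paper's proof treats (\ref{eq:dompriorpred}) as a black-box consequence of Hellinger support and leaves the matching remark as narrative only. Conversely, the paper's route is cleaner for the bare corollary and avoids re-proving machinery that will reappear in any case when testing power is analysed later.
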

So under Schwartz's prior mass condition, one does not worry about
condition~(\ref{eq:dompriorpred}); it plays a role only if one is
interested in priors that are not Kullback-Leibler priors, like
that of example~\ref{ex:fixedwidthdomain}.
\begin{example} To illustrate the denominator problem by example,
consider the following semi-parametric regression problem: one observes
pairs $(X_i,Y_i)$, $i\geq1$, of real-valued random variables related
through $Y=f(X)+e$ for some regression function $f:\RR\rightarrow\RR$.
Assume for simplicity that $f\geq0$ and that the distribution of
the co-variate $X$ is such that for all $\delta>0$, $P(f(X)<\delta)>0$.
The errors $e_1,e_2,\ldots$ are independent of $X$ and \iid\ with a
distribution supported on $[\tht,\infty)$, for some $\tht\in\RR$.
The problem occurs when the statistician believes that his errors
are \emph{positive} with probability one, while their true distribution
assigns (small but) non-zero probability to \emph{negative} outcomes.
(In finance examples of this type abound, arising when one
anticipates lower-bounded returns (for example a hedged return,
the total return on a bond or an auction price) from an incomplete
or simplified model for downside risk.)

The statistician will make a choice for the prior $\Pi$ that reflects
his belief and not place mass around negative values of the parameter
$\tht$. When the experiment is conducted, sooner or later a negative
value of the error will occur in conjunction with a small value of
$f(X)$, resulting in a negative value for $Y$ that is impossible
according to the part of the model that receives any prior mass.
Consequently, the likelihood evaluates to zero $\Pi$-almost-everywhere
in the model, resulting in a posterior that is ill-defined. Clearly,
$\Pi$ does not satisfy (\ref{eq:KLprior}) and the support mismatch
shows that $P_0$ does not lie in the Hellinger support of $\Pi$ either.
\end{example}

\subsection{A sketch of the proof of theorem~\ref{thm:main}}

To prove consistency with a given prior, one tries to show that the
posterior concentrates all its mass in neighbourhoods of $P_0$
asymptotically, often metric balls centred on $P_0$.
The first lemma in this section asserts that, under the condition
that specific test-sequences for covers of the complement exist,
posterior concentration follows. The proof is inspired by
that of Schwartz's theorem \cite{Schwartz65,Ghosal00,Ghosh03} and
Le~Cam's \emph{dimensionality restrictions} \cite{LeCam73}. Central
is the existence of certain test sequences, in a construction 
Birg\'e refers to as {\it covering-a-ball-by-smaller-balls}
\cite{Birge83,Birge84}, which has its roots in \cite{LeCam73}.
The argument is essentially an application of
the minimax theorem (see, for example, section~16.4 of \cite{LeCam86},
or section~45 of \cite{Strasser85}): the specific form it takes
in this paper is an adaptation of methods developed in \cite{Kleijn06}.
The essential difference between lemma~\ref{lem:cons} and existing
Bayesian limit theorems is that posterior numerator and
denominator are dealt with simultaneously rather than separately.
As a result the prior $\Pi$ is one of the factors that determines 
testing power and can be balanced against model properties directly.

In the following lemma $V$ is a fixed set (\eg\ the complement of
an open neighbourhood of $P_0$) for which we
want to prove asymptotically vanishing posterior mass.
We cover $V$ by a finite number of model subsets $V_1,\ldots,V_N$
such that for each $V_i$, a special type of test sequence exists.
In the next subsection, we give conditions for the existence of
such sequences.
\begin{lemma}
\label{lem:cons}
Assume that $P_0^n\ll P_n^\Pi$ for all
$n\geq1$. For some $N\geq1$, let $V_1,\ldots,V_N$ be a finite
collection of measurable model subsets. If there exist constants
$D_i>0$ and test sequences $(\phi_{i,n})$ for all $1\leq i\leq N$
such that,
\begin{equation}
  \label{eq:testcond}
  P_0^n\phi_{i,n} +
  \sup_{P\in V_i} P_0^n \frac{dP^n}{dP_n^\Pi}(1-\phi_{i,n})
    \leq e^{-nD_i},
\end{equation}
for large enough $n$, then any $V\subset\bigcup_{1\leq i\leq N} V_i$
receives posterior mass zero asymptotically,
\begin{equation}
  \label{eq:aszero}
  \Pi(V|X_1,\ldots,X_n)\convas{P_0}0.
\end{equation}
\end{lemma}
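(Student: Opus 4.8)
The plan is to rewrite the posterior mass of each $V_i$ as a single $\Pi$-integral against the prior predictive distribution and then to run two Borel--Cantelli arguments driven by (\ref{eq:testcond}). Write $\Pi_n(\,\cdot\,)$ for the posterior $\Pi(\,\cdot\,|\,X_1,\ldots,X_n\,)$. Since $V\subset\bigcup_{i=1}^N V_i$ with $N$ finite, monotonicity and finite subadditivity give $\Pi_n(V)\le\sum_{i=1}^N\Pi_n(V_i)$, so (\ref{eq:aszero}) follows once we show $\Pi_n(V_i)\convas{P_0}0$ for each fixed $i$. The first step is the identity
\[
  \Pi_n(V_i)=\int_{V_i}\frac{dP^n}{dP_n^\Pi}(X_1,\ldots,X_n)\,d\Pi(P),
\]
valid $P_0^n$-almost-surely: by Tonelli's theorem the denominator of (\ref{eq:posterior}) is a density of $P_n^\Pi$ with respect to the $n$-fold product of the dominating measure, it vanishes precisely on a $P_n^\Pi$-null set and hence --- by the hypothesis $P_0^n\ll P_n^\Pi$, \cf\ (\ref{eq:dompriorpred}) --- on a $P_0^n$-null set, and off that set the integrand above is the ratio of the density of $P^n$ to that of $P_n^\Pi$ occurring in (\ref{eq:posterior}). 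The only delicate point here is the choice of jointly measurable versions of $(P,x)\mapsto(dP^n/dP_n^\Pi)(x)$ and the disposal of the $P_0^n$-null set on which the denominator of (\ref{eq:posterior}) vanishes; this is exactly what the dominated-model framework of Section~\ref{sec:cons} is set up to provide, and with that convention the symbol $dP^n/dP_n^\Pi$ in the display is the one occurring in (\ref{eq:testcond}).

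Fixing $i$, I split $\Pi_n(V_i)=\Pi_n(V_i)\,\phi_{i,n}+\Pi_n(V_i)\,(1-\phi_{i,n})$ and bound the two summands using $0\le\Pi_n(V_i)\le1$ and the fact that each test $\phi_{i,n}$ is $[0,1]$-valued. For the first summand, $\Pi_n(V_i)\,\phi_{i,n}\le\phi_{i,n}$, and (\ref{eq:testcond}) gives $P_0^n\phi_{i,n}\le e^{-nD_i}$ for $n$ large; hence $\sum_nP_0^n\phi_{i,n}<\infty$, and Markov's inequality together with the Borel--Cantelli lemma yields $\phi_{i,n}(X_1,\ldots,X_n)\convas{P_0}0$. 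For the second summand I take $P_0^n$-expectations of the identity above weighted by $(1-\phi_{i,n})$; the integrand $(dP^n/dP_n^\Pi)(1-\phi_{i,n})$ is nonnegative, so Tonelli's theorem permits interchanging the $P_0^n$-expectation with the $\Pi$-integral over $V_i$:
\begin{align*}
  P_0^n\bigl[\Pi_n(V_i)(1-\phi_{i,n})\bigr]
  &=\int_{V_i}P_0^n\Bigl[\frac{dP^n}{dP_n^\Pi}(1-\phi_{i,n})\Bigr]\,d\Pi(P)\\
  &\le\Pi(V_i)\,\sup_{P\in V_i}P_0^n\Bigl[\frac{dP^n}{dP_n^\Pi}(1-\phi_{i,n})\Bigr],
\end{align*}
which by (\ref{eq:testcond}) is at most $e^{-nD_i}$ for $n$ large. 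Hence $\sum_nP_0^n\bigl[\Pi_n(V_i)(1-\phi_{i,n})\bigr]<\infty$, and a second Markov--Borel--Cantelli step gives $\Pi_n(V_i)(1-\phi_{i,n})\convas{P_0}0$. Adding the two summands yields $\Pi_n(V_i)\convas{P_0}0$ for every $i$, and therefore $\Pi_n(V)\convas{P_0}0$, which is (\ref{eq:aszero}).

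The one substantive feature of this argument, and the reason (\ref{eq:testcond}) is the natural hypothesis, is that once the posterior is written as $\int_{V_i}(dP^n/dP_n^\Pi)\,d\Pi$ the prior predictive $P_n^\Pi$ --- and hence the prior --- already sits inside the testing-type quantity $P_0^n[(dP^n/dP_n^\Pi)(1-\phi_{i,n})]$, so there is no separate lower bound on the posterior denominator left to establish and numerator and denominator are controlled at once. I expect the only genuine obstacle to be the measure-theoretic bookkeeping of the first paragraph --- making the almost-sure posterior identity precise with jointly measurable Radon--Nikodym derivatives --- after which the argument reduces to two routine applications of Markov's inequality and the Borel--Cantelli lemma, with the harmless caveat that (\ref{eq:testcond}) is assumed only for large $n$, which affects none of the summability conclusions.
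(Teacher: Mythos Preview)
Your proof is correct and follows essentially the same route as the paper's: both rewrite $\Pi_n(V_i)$ as $\int_{V_i}(dP^n/dP_n^\Pi)\,d\Pi$, split with the test $\phi_{i,n}$, apply Fubini/Tonelli to bound the type-II part by $\Pi(V_i)\sup_{P\in V_i}P_0^n[(dP^n/dP_n^\Pi)(1-\phi_{i,n})]$, and finish with Markov plus Borel--Cantelli. The only cosmetic difference is that the paper first bounds $P_0^n\Pi_n(V_i)$ by $e^{-nD_i}$ and runs a single Markov--Borel--Cantelli step, whereas you run it separately on $\phi_{i,n}$ and on $\Pi_n(V_i)(1-\phi_{i,n})$; this changes nothing of substance.
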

The condition that covers of the model have to be of \emph{finite}
order is restrictive: problems arise already in parametric context,
for instance, if the $V_i$ are associated with fixed-radius metric balls
required to cover all of $\RR^k$. In such cases application of the
theorem requires a bit more refinement, for example through the
methods put forth in \cite{LeCam73} (see example~\ref{ex:lecamdim}).
Additionally we consider two other alternatives to
by-pass the finiteness assumption on the order of the cover in
section~\ref{sec:sep}.

\subsection{Existence and power of test sequences}
\label{sub:test}

Le~Cam \cite{LeCam73,LeCam75,LeCam86} and Birg\'e
\cite{Birge83,Birge84} put forth a versatile approach to
testing that combines the minimax theorem with the Hellinger geometry
of the model, in particular its Hellinger metric entropy
numbers. Below, we make a carefully chosen variation on this theme
that is technically close to the methods of \cite{Kleijn06}.
(Define $V^n=\{P^n:P\in V\}$ and denote its convex
hull by $\co{V^n}$; elements from $\co{V^n}$ are denoted $P_n$.)
\begin{lemma}
\label{lem:ntests}
Let $n\geq1$, $V\in\scrB$ be given; assume that
$P_0^n({dP^n}/{dP_n^\Pi})<\infty$ for all $P\in V$. Then there exists a test
sequence $(\phi_n)$ such that,
\begin{equation}
   \label{eq:testpower}
  P_0^n\phi_n +
  \sup_{P\in V} P_0^n \frac{dP^n}{dP_n^\Pi}(1-\phi_n)
  \leq \sup_{P_n\in\co{V^n}}\vinf_{\phantom{|}0\leq\al\leq1\phantom{|}}
      P_0^n\Bigl(\frac{dP_n}{dP_n^\Pi}\Bigr)^\al,
\end{equation}
\ie\ testing power is bounded in terms of \emph{Hellinger transforms}.
\end{lemma}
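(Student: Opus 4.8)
The plan is to recast the existence of a uniformly powerful test as a minimax problem --- tests against the convex hull of the alternatives --- with the prior predictive $P_n^\Pi$ entering through the likelihood ratios $dP^n/dP_n^\Pi$ (well defined $P_0^n$-a.s.\ since $P_0^n\ll P_n^\Pi$). Let $\Phi$ be the set of all $\scrA^n$-measurable $\phi:\scrX^n\to[0,1]$, viewed inside $L^\infty(\scrX^n,P_0^n)$ (functions agreeing $P_0^n$-a.s.\ being identified); as a weak-$*$ closed convex subset of the unit ball it is, by Banach--Alaoglu, convex and weak-$*$ compact. Write $\co{V^n}$ for the convex hull of $\{P^n:P\in V\}$, and for $P_n=\sum_j\lambda_jP_j^n\in\co{V^n}$ read $dP_n/dP_n^\Pi=\sum_j\lambda_j\,dP_j^n/dP_n^\Pi$ off densities. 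Define
\[
  K(\phi,P_n)=P_0^n\phi+P_0^n\frac{dP_n}{dP_n^\Pi}(1-\phi),
  \qquad \phi\in\Phi,\ P_n\in\co{V^n}.
\]
The hypothesis $P_0^n(dP^n/dP_n^\Pi)<\infty$ for each $P\in V$ makes $dP_n/dP_n^\Pi$ a finite sum of $P_0^n$-integrable functions, hence an element of $L^1(P_0^n)$; consequently $K(\cdot,P_n)$ is finite and weak-$*$ continuous on $\Phi$, while $K(\phi,\cdot)$ is affine on the convex set $\co{V^n}$. Affineness in the second argument also gives, for every test $\phi$,
\[
  P_0^n\phi+\sup_{P\in V}P_0^n\frac{dP^n}{dP_n^\Pi}(1-\phi)
  =\sup_{P\in V}K(\phi,P)=\sup_{P_n\in\co{V^n}}K(\phi,P_n),
\]
exhibiting the left-hand side of (\ref{eq:testpower}) as $\sup_{P_n\in\co{V^n}}K(\phi,P_n)$.

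I would then apply a minimax theorem (\eg\ section~45 of \cite{Strasser85} or section~16.4 of \cite{LeCam86}): $\Phi$ is convex and compact, $K(\cdot,P_n)$ is convex and lower semicontinuous, and $K(\phi,\cdot)$ is concave on $\co{V^n}$, whence
\[
  \vinf_{\phantom{|}\phi\in\Phi}\,\sup_{P_n\in\co{V^n}}K(\phi,P_n)
  =\sup_{P_n\in\co{V^n}}\,\vinf_{\phantom{|}\phi\in\Phi}K(\phi,P_n).
\]
For a fixed $P_n$ the $P_0^n$-integrand of $K(\phi,P_n)$, namely $\phi\bigl(1-dP_n/dP_n^\Pi\bigr)+dP_n/dP_n^\Pi$, is minimised pointwise by letting $\phi$ be the indicator of $\{dP_n/dP_n^\Pi>1\}$, so $\vinf_\phi K(\phi,P_n)=\int\min\bigl(1,dP_n/dP_n^\Pi\bigr)\,dP_0^n$; and since $\min(1,x)\le x^\al$ for all $x\ge0$ and $\al\in[0,1]$, optimising over $\al$ yields $\vinf_\phi K(\phi,P_n)\le\vinf_{0\le\al\le1}P_0^n(dP_n/dP_n^\Pi)^\al$. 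Chaining these three facts shows that $\inf_{\phi\in\Phi}\sup_{P_n\in\co{V^n}}K(\phi,P_n)$ is bounded by the right-hand side of (\ref{eq:testpower}).

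It remains to realise that infimum by an honest test. The map $\phi\mapsto\sup_{P_n\in\co{V^n}}K(\phi,P_n)$ is a supremum of weak-$*$ continuous functions, hence weak-$*$ lower semicontinuous, so it attains its minimum over the compact set $\Phi$ at some $\phi_n$; by the identity of the first paragraph this $\phi_n$ satisfies (\ref{eq:testpower}). The step I expect to require the most care is the verification of the minimax theorem's hypotheses --- in particular noticing that the moment condition $P_0^n(dP^n/dP_n^\Pi)<\infty$ is precisely what renders $K$ finite and $K(\cdot,P_n)$ weak-$*$ continuous on $\Phi$ --- whereas the inner reduction to a Hellinger transform (a Neyman--Pearson test plus the elementary bound $\min(1,x)\le x^\al$) is routine. (If $\co{V^n}$ is read as the \emph{closed} convex hull, continuity of $P_n\mapsto P_0^n(dP_n/dP_n^\Pi)^\al$ lets a short approximation close the remaining gap.)
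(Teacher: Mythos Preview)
Your proof is correct and follows essentially the same route as the paper: a minimax argument over tests versus the convex hull $\co{V^n}$, followed by the Neyman--Pearson choice $\phi=1\{dP_n/dP_n^\Pi>1\}$ and the elementary bound $\min(1,x)\le x^\al$. The paper outsources the minimax/existence step to lemma~6.1 of \cite{Kleijn06} and theorem~45.8 of \cite{Strasser85}, whereas you spell out the Banach--Alaoglu weak-$*$ compactness of $\Phi$ and the $L^1$-integrability that makes $K(\cdot,P_n)$ weak-$*$ continuous; otherwise the arguments coincide.
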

With the next definition, we localize the prior in a flexible sense
and cast the discussion into a frame that also features centrally in
Wong and Shen \cite{Wong95}; where their 
approximation of $P_0$ pertains to a sieve, here it is required
that the set $B$ approximate $P_0$ in the same technical sense.
Given $\Pi$ and a measurable $B$ such that $\Pi(B)>0$,
define the \emph{local} prior predictive distributions $P_n^{\Pi|B}$ by
conditioning the prior predictive on $B$:
\begin{equation}
  \label{eq:locpriorpred}
  P_n^{\Pi|B}(A) = \int Q^n(A)\,d\Pi(Q|B),
\end{equation}
for all $n\geq1$ and $A\in\sigma(X_1,\ldots,X_n)$. Barron localizes
his \emph{matching} criterion in a similar way \cite{Barron86} and
Walker defines \emph{restricted predictive densities} to localize
his analysis \cite{Walker04}. The following lemma
formulates a more easily accessible upper bound for the right-hand
side of inequality~(\ref{eq:testpower}), which prescribes the
($n$-independent) form of the central requirement of theorem~\ref{thm:main}. 
\begin{lemma}
\label{lem:HTBnd}
Let $\Pi$ be given, fix $n\geq1$. Let $V,B\in\scrB$ be such
that $\Pi(B)>0$ and for all $P\in V$, $\sup_{Q\in B} P_0(dP/dQ)<\infty$.
Then there exists a test function $\phi_n:\scrX^n\rightarrow[0,1]$ such that,
\begin{equation}
  \label{eq:iidpower}
  \begin{split}
  P_0^n\phi_n &+ \sup_{P\in V} P_0^n \frac{dP^n}{dP_n^\Pi}(1-\phi_n)\\
    &\leq \vinf_{\phantom{|}0\leq\al\leq1}\Pi(B)^{-\al} 
      \int\biggl[\sup_{P\in \co{V}} P_0\Bigl(\frac{dP}{dQ}\Bigr)^{\al}\biggr]^n
      d\Pi(Q|B).
  \end{split}
\end{equation}
\end{lemma}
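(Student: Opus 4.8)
The plan is to start from Lemma~\ref{lem:ntests}, whose right-hand side already bounds testing power by a supremum over $\co{V^n}$ of an infimum over $\al$ of $P_0^n(dP_n/dP_n^\Pi)^\al$. The first step is to discard prior mass outside $B$ in the denominator: since $P_n^\Pi(A)=\int Q^n(A)\,d\Pi(Q)\geq\int_B Q^n(A)\,d\Pi(Q)=\Pi(B)\,P_n^{\Pi|B}(A)$, we have $dP_n/dP_n^\Pi\leq\Pi(B)^{-1}\,dP_n/dP_n^{\Pi|B}$ pointwise ($P_0^n$-a.e., modulo the support conventions discussed in the two notes). Hence for each fixed $\al\in[0,1]$,
\[
  P_0^n\Bigl(\frac{dP_n}{dP_n^\Pi}\Bigr)^\al
  \leq \Pi(B)^{-\al}\,P_0^n\Bigl(\frac{dP_n}{dP_n^{\Pi|B}}\Bigr)^\al,
\]
and taking the infimum over $\al$ and then the supremum over $P_n\in\co{V^n}$ preserves the inequality. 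So it remains to bound, for each $\al$, the quantity $\sup_{P_n\in\co{V^n}}P_0^n(dP_n/dP_n^{\Pi|B})^\al$ by $\int[\sup_{P\in\co V}P_0(dP/dQ)^\al]^n\,d\Pi(Q|B)$.

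The second step is a convexity/duality manoeuvre on $\co{V^n}$. A generic element $P_n\in\co{V^n}$ is a finite convex combination $\sum_j\lambda_j P_j^n$ with $P_j\in V$; we want to pull the supremum inside so that it becomes a supremum over $P\in\co V$ of a \emph{product} bound. For fixed $Q$ and fixed $\al$, write $dP_n^{\Pi|B}=\int dR^n\,d\Pi(R|B)$ and use the elementary inequality $P_0^n(dP_n/dP_n^{\Pi|B})^\al\leq\int P_0^n(dP_n/dR^n)^\al\,d\Pi(R|B)$, valid because $x\mapsto x^\al$ is concave and $(dP_n^{\Pi|B})^{-1}$ is an average — this is the same step that underlies Lemma~\ref{lem:HTBnd}'s predecessor; more carefully, one applies Jensen's inequality to the measure $d\Pi(R|B)$ against the convex-on-$[0,\infty)$ map obtained after the substitution, exactly as in the Hellinger-transform estimates of appendix~\ref{app:hell}. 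Then $P_0^n(dP_n/dR^n)^\al$ with $R$ ranging over $B$ and the product structure of $P_0^n$, $R^n$ lets us factorize: for $P_n=\sum_j\lambda_j P_j^n$ one bounds $P_0^n(dP_n/dR^n)^\al$ by $\prod_{k=1}^n$ of a single-observation expectation, after replacing the mixture by its worst component and invoking the finiteness hypothesis $\sup_{Q\in B}P_0(dP/dQ)<\infty$ to license the term-by-term manipulation (this is where the absolute-continuity conventions $p_0>0\Rightarrow p>0\Rightarrow q>0$ are used). Thus each factor is at most $\sup_{P\in\co V}P_0(dP/dQ)^\al$, giving the $n$-th power, and the outer $\int\cdots\,d\Pi(Q|B)$ reappears from the Jensen step above.

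The third and final step is to reassemble: combining the two displays, for every $\al\in[0,1]$,
\[
  \text{(RHS of Lemma~\ref{lem:ntests})}\leq
  \Pi(B)^{-\al}\int\Bigl[\sup_{P\in\co V}P_0\Bigl(\frac{dP}{dQ}\Bigr)^\al\Bigr]^n d\Pi(Q|B),
\]
and taking $\inf_{0\leq\al\leq1}$ on the right yields exactly (\ref{eq:iidpower}); the left-hand side of (\ref{eq:iidpower}) is the left-hand side of (\ref{eq:testpower}), so Lemma~\ref{lem:ntests} supplies the test function $\phi_n$. The main obstacle is the second step: justifying the exchange of the supremum over $\co{V^n}$ with the single-coordinate supremum over $\co V$, i.e.\ showing that the worst convex combination of product measures is controlled coordinatewise. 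The clean way is to keep $\al$ fixed throughout (so that $x\mapsto x^\al$ is genuinely concave and Jensen applies cleanly to the mixture over $R$), bound the convex combination $\sum_j\lambda_j P_j^n$ against $R^n$ by $\sum_j\lambda_j\prod_k P_0(dP_j/dR)^{\ldots}$ only after noting that $P_0^n(dP_n/dR^n)^\al\leq\sum_j\lambda_j^\al\cdots$ is \emph{not} quite what we want — instead one should first observe $\bigl(\sum_j\lambda_j P_j^n\bigr)^\al\leq\sum_j\lambda_j(P_j^n)^\al$ fails, so the correct route is: $dP_n/dR^n=\sum_j\lambda_j\,dP_j^n/dR^n$, hence $(dP_n/dR^n)^\al\leq(\max_j dP_j^n/dR^n)^\al\leq\sum_j(dP_j^n/dR^n)^\al$ is too lossy; the workable bound is the reverse, using that $P_n\mapsto P_0^n(dP_n/dR^n)^\al$ is itself \emph{convex} in $P_n$ (a standard fact about the map $t\mapsto P_0^n(dP_n/dR^n)^\al$ on the convex set of measures, since the integrand is convex in the density $dP_n/dR^n$ when $\al\le1$... in fact $x\mapsto x^\al$ is concave, so this map is \emph{concave} in $P_n$) — the sup over a convex set of a concave function is attained at... this needs care, and the honest resolution is that one does \emph{not} reduce to components of $V$ but keeps $\co V$ and uses that $Q\mapsto P_0(dP/dQ)^\al$ interacts with the product structure directly via $P_0^n(dP_n/dR^n)^\al\le\bigl(\sup_{P\in\co V}P_0(dP/dR)^\al\bigr)^n$, which itself follows from the single-observation Hellinger-transform bound applied $n$ times together with the mutual independence. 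Getting this factorization airtight — the passage from the $n$-fold Hellinger transform on $\co{V^n}$ to the $n$-th power of the single-observation quantity on $\co V$, uniformly in $R\in B$ — is the crux, and I would isolate it as the one genuinely substantive estimate, handling the rest by the monotone/Jensen bookkeeping sketched above.
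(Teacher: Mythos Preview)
Your three-step structure matches the paper's proof exactly: (i) lower-bound $P_n^\Pi$ by $\Pi(B)\,P_n^{\Pi|B}$; (ii) apply Jensen via convexity of $x\mapsto x^{-\al}$ to pull the $\Pi(\cdot|B)$-integral outside; (iii) factorize the $n$-fold Hellinger transform over $\co{V^n}$ into the $n$-th power of the single-observation transform over $\co{V}$. Steps (i) and (ii) are fine; the gap is your step (iii), where your own discussion wanders and ends without an argument. You correctly notice that concavity of $x\mapsto x^\al$ makes $P_n\mapsto P_0^n(dP_n/dQ^n)^\al$ concave, so the supremum over $\co{V^n}$ is \emph{not} attained at the extreme points $P^n$, and none of the componentwise bounds you attempt (bounding by $\max_j$, by $\sum_j\lambda_j^\al$, etc.) can work.

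The paper closes this gap by invoking lemma~6.2 of \cite{Kleijn06}, whose content is a short conditioning/induction you did not find. Write $P_n=\sum_j\lambda_jP_j^n\in\co{V^n}$, so $dP_n/dQ^n(x_1,\ldots,x_n)=\sum_j\lambda_j\prod_i(dP_j/dQ)(x_i)$. Freeze $x_2,\ldots,x_n$, set $\mu_j=\lambda_j\prod_{i\geq2}(dP_j/dQ)(x_i)$ and $S=\sum_j\mu_j$; then the density factors as $S\cdot(d\bar P/dQ)(x_1)$ with $\bar P:=\sum_j(\mu_j/S)P_j\in\co{V}$. Integrating out $x_1$ against $P_0$ gives a factor at most $c:=\sup_{P\in\co{V}}P_0(dP/dQ)^\al$, while $S=dP_{n-1}/dQ^{n-1}$ with $P_{n-1}=\sum_j\lambda_jP_j^{n-1}\in\co{V^{n-1}}$, so induction on $n$ yields $P_0^n(dP_n/dQ^n)^\al\leq c^n$. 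That is the ``airtight'' passage you were looking for; it is precisely the fact that the convex hull $\co{V}$ (not $V$) appears on the right that makes the conditioning work. One further omission: you never verify the hypothesis of lemma~\ref{lem:ntests} that $P_0^n(dP^n/dP_n^\Pi)<\infty$ for $P\in V$; the paper does this at the end by applying the bound from steps (i)--(ii) with $\al=1$ and $P_n=P^n$, using $\sup_{Q\in B}P_0(dP/dQ)<\infty$.
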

\noindent Theorem~\ref{thm:main} is the conclusion of
lemma~\ref{lem:cons} upon substitution of lemmata~\ref{lem:ntests} and
\ref{lem:HTBnd}.


\section{Variations on Schwartz's theorem}
\label{sec:variations}

In this section we apply theorem~\ref{thm:main} to re-derive
Schwartz's theorem, sharpen its assertion to consistency in
Kullback-Leibler divergence and we consider model conditions
that allow priors charging metric balls rather than Kullback-Leibler
neighbourhoods.

\subsection{Schwartz's theorem and Kullback-Leibler priors}
\label{sub:KLpriors}

The strategy to prove posterior consistency in a certain topology
(or more generally, to prove posterior concentration outside a set $V$)
now runs as follows: one looks for a finite cover of $V$ by model subsets
$V_i$, ($1\leq i\leq N$) satisfying the inequalities (\ref{eq:power})
for subsets $B_i$ that are as large as possible and neighbourhoods
of $P_0$ in an appropriate sense. Subsequently
we try to find (a $\sigma$-algebra $\scrB$ on $\scrP$ and) a
prior $\Pi:\scrB\rightarrow[0,1]$ such that ($B_i\in\scrB$ and)
$\Pi(B_i)>0$ for all $1\leq i\leq N$. In this regard the following
lemma offers a great deal of guidance, because it relates testing power
to Kullback-Leibler separation of the sets $B$ and $W$ in
definition~(\ref{eq:testingpower}).
\begin{lemma}
\label{lem:KLequiv}
Let $P_0\in B\subset\scrP$ and $W\subset\scrP$ be given and assume that
there exists an $a\in(0,1)$ such that for all $Q\in B$ and $P\in W$, 
$P_0(dP/dQ)^a<\infty$. Then,
\begin{equation}
  \label{eq:powerbnd}
  \pi_{P_0}(W,B)<1,
\end{equation}
if and only if,
\begin{equation}
  \label{eq:KLsep}
  \sup_{Q\in B}-P_0\log\frac{dQ}{dP_0}
    < \vinf_{\phantom{|}P\in W}-P_0\log\frac{dP}{dP_0}.
\end{equation}
\end{lemma}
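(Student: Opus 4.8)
The plan is to analyse the single-variable function $\al\mapsto P_0(dP/dQ)^\al$ for fixed $P\in W$ and $Q\in B$, and then handle the suprema. Write $r = dP/dQ$, interpreted with the support conventions of the ``two notes on supports'' (so $r$ is defined $P_0$-a.s. and finite issues are absorbed into indicators). The key elementary fact is that $\phi(\al) := P_0 r^\al = P_0 e^{\al \log r}$ is convex in $\al$ on $[0,1]$ (H\"older / Jensen), with $\phi(0) = P_0 1_{\{p_0>0, p>0\}} \le 1$. One should first reduce to the case $\phi(0) = 1$ (i.e.\ $P_0 \ll P$ on the relevant set); if $\phi(0) < 1$ for some $P$ in the cover then $\pi_{P_0}(W,B) < 1$ is immediate from that $P$ alone, and on the KL side $-P_0\log(dP/dP_0) = +\infty$ for such $P$ is consistent with (\ref{eq:KLsep}) being vacuously easier, so the interesting regime is $\phi(0)=1$ throughout.

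Assuming $\phi(0)=1$: by convexity, $\phi$ has a right derivative at $0$, namely $\phi'(0^+) = P_0\bigl(r^\al \log r\bigr)\big|_{\al=0^+} = P_0 \log r = -P_0\log(dQ/dP) $ when this is well-defined, and more carefully $\phi'(0^+) = \lim_{\al\downarrow 0}(\phi(\al)-1)/\al$. The chain of equalities I want is
\begin{equation}
  \label{eq:KLsplit}
  P_0\log\frac{dP}{dQ} = -P_0\log\frac{dQ}{dP_0} + P_0\log\frac{dP}{dP_0},
\end{equation}
valid as an identity in $\bar{\RR}$ under the support conventions; here the left side is the relevant directional derivative. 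The first direction (``$\Leftarrow$'', KL-separation implies testing power $<1$): if (\ref{eq:KLsep}) holds, pick $P\in W$, $Q\in B$; then by (\ref{eq:KLsplit}) the quantity $P_0\log(dP/dQ)$ is bounded above by a strictly negative constant $-c$ uniform in $P,Q$ (the gap in (\ref{eq:KLsep})). Since $\phi(0)=1$ and $\phi'(0^+)\le P_0\log(dP/dQ) \le -c < 0$, there is $\al_0\in(0,1)$ with $\phi(\al_0) < 1$; the point is to make $\al_0$ \emph{uniform} over $P\in W$, $Q\in B$, which is where the hypothesis $P_0(dP/dQ)^a<\infty$ for a \emph{fixed} $a$ is used: convexity on $[0,a]$ gives $\phi(\al) \le (1-\al/a)\phi(0) + (\al/a)\phi(a) = 1 - (\al/a)(1 - \phi(a))$ for $\al\in[0,a]$, so it suffices to bound $\phi(a) = P_0(dP/dQ)^a$ away from... no --- $\phi(a)$ could be large. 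Instead use convexity the other way: $1 - \phi(\al) \ge -\al\,\phi'(0^+)\cdot(\text{something})$; more cleanly, $\phi(\al) \le \phi(0)^{1-\al/a}\phi(a)^{\al/a}$ is wrong, the correct convexity bound is the linear interpolation just written, so pick $\al = \al_0$ small enough that $\al_0\,\phi'(0^+)/?$... The efficient route is: $\log\phi$ is convex (it is $\log$ of a Hellinger-transform-type quantity), $\log\phi(0)=0$, $(\log\phi)'(0^+) = \phi'(0^+) \le -c$, hence $\log\phi(\al)\le -c\al + o(\al)$, and for the \emph{uniform} statement one invokes that $\log\phi(\al) \le \frac{\al}{a}\log\phi(a)$ fails in general, so instead bound the second-order behaviour — but actually the cleanest uniform argument is: for $\al\in[0,a]$, convexity of $\phi$ gives $\frac{\phi(\al)-\phi(0)}{\al}\le \frac{\phi(a)-\phi(0)}{a}$ is the \emph{wrong} inequality direction; convexity gives $\frac{\phi(\al)-\phi(0)}{\al - 0}$ is \emph{increasing} in $\al$, so $\frac{\phi(\al)-1}{\al}\ge\phi'(0^+)$, i.e.\ $\phi(\al)\ge 1+\al\phi'(0^+)$, which is a lower bound, useless. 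I therefore expect the genuine argument to pass through Appendix~\ref{app:hell}: the Hellinger transform $\al\mapsto\rho_\al(P,Q) := P_0(dP/dQ)^\al$ (with the $P_0$-twist) is log-convex, equals $1$ at $\al=0$, and $\inf_\al\rho_\al<1$ \emph{iff} the right derivative at $0$ is negative, \emph{iff} $P_0\log(dP/dQ)<0$; the uniform version over $W\times B$ then follows because the finiteness hypothesis at the single exponent $a$ gives equicontinuity / a uniform modulus for the family $\{\log\rho_\al\}$ on $[0,a]$ via convexity (a convex function on $[0,a]$ bounded above at $a$ and equal to $0$ at $0$ has slope at $0$ controlled by its value at $a$ only from above, but the \emph{infimum over $[0,a]$} of such a function is $\le 0$ and is $<-\delta$ as soon as the slope at $0$ is $<-\delta'$, uniformly — this is the lemma in the appendix I would cite).

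For the converse (``$\Rightarrow$'', testing power $<1$ implies KL-separation): suppose (\ref{eq:KLsep}) fails, i.e.\ $\sup_{Q\in B}-P_0\log(dQ/dP_0) \ge \inf_{P\in W}-P_0\log(dP/dP_0)$. Choose sequences $Q_k\in B$, $P_k\in W$ with $-P_0\log(dQ_k/dP_0)\to s := \sup_B$ and $-P_0\log(dP_k/dP_0)\to t := \inf_W$ with $s\ge t$; by (\ref{eq:KLsplit}), $P_0\log(dP_k/dQ_k) = -P_0\log(dQ_k/dP_0) + P_0\log(dP_k/dP_0) \to (-s) + (-(-t))$... careful with signs: $= \bigl(-P_0\log(dQ_k/dP_0)\bigr) - \bigl(-P_0\log(dP_k/dP_0)\bigr) \to s - t \ge 0$. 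Hence $\phi_k'(0^+) = P_0\log(dP_k/dQ_k) \to s-t\ge 0$, so for each $k$, since $\phi_k$ is convex with $\phi_k(0)=1$ and right-derivative at $0$ converging to a non-negative number, $\inf_{\al\in[0,1]}\phi_k(\al)\to 1$ (the infimum of a convex function that is $1$ at $0$ with slope $\to\ge 0$ there cannot drop below $1-o(1)$); taking $k\to\infty$ and then the sup over $W$ and $B$ gives $\pi_{P_0}(W,B) = \inf_\al\sup_{P\in W}\sup_{Q\in B}\phi(\al)\ge \sup_k\inf_\al\phi_k \to 1$, so $\pi_{P_0}(W,B)\ge 1$, contradicting (\ref{eq:powerbnd}). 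The only subtlety here is interchanging $\inf_\al$ with the suprema in the direction needed; since we only need the inequality $\pi_{P_0}(W,B)\ge\inf_\al\phi_k(\al)$ for each fixed $k$ (just restrict the suprema to the single pair $P_k,Q_k$, then $\inf_\al$ is larger), the interchange is harmless.

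The main obstacle I anticipate is making the ``$\Leftarrow$'' direction \emph{uniform} over $P\in W,\ Q\in B$ from a hypothesis that only controls $P_0(dP/dQ)^a$ pointwise at one exponent $a$: one needs that a strictly negative upper bound on the slope $\phi'(0^+) = P_0\log(dP/dQ)$, uniform in $(P,Q)$, upgrades to a single $\al_0$ and a single $\eta<1$ with $\sup_{P\in W}\sup_{Q\in B}\phi(\al_0)\le\eta$. I expect this to follow from a convexity lemma in Appendix~\ref{app:hell}: for a convex $\psi:[0,a]\to(0,\infty]$ with $\psi(0)=1$, one has $\psi(\al)\le 1$ for all $\al$ up to the first zero-crossing, and quantitatively $\inf_{[0,a]}\psi \le 1 + \tfrac{a}{2}\psi'(0^+)$ whenever $\psi'(0^+)<0$ — no finiteness of $\psi(a)$ needed beyond $\psi(a)<\infty$, which is exactly the standing hypothesis. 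Granting that lemma, everything else is bookkeeping with (\ref{eq:KLsplit}) and the support conventions, so I would present the proof as: (i) record log-convexity of $\al\mapsto P_0(dP/dQ)^\al$ and the value/slope at $0$; (ii) reduce to $\phi(0)=1$; (iii) do ``$\Leftarrow$'' via the appendix convexity lemma plus the uniform slope bound from (\ref{eq:KLsep}); (iv) do ``$\Rightarrow$'' by the sequential/contradiction argument above.
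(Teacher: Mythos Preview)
Your ``$\Rightarrow$'' direction is essentially right and matches the paper in spirit: the paper does it in one line via Jensen, $P_0(dP/dQ)^\al\ge\exp\bigl(\al\,P_0\log(dP/dQ)\bigr)$, then splits $P_0\log(dP/dQ)$ as in your (\ref{eq:KLsplit}) and takes suprema, so no sequences are needed. Your tangent-line lower bound $\phi_k(\al)\ge 1+\al\,\phi_k'(0^+)$ is exactly Jensen in disguise.

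Your ``$\Leftarrow$'' direction has a genuine gap. The convexity lemma you propose, namely that for convex $\psi$ on $[0,a]$ with $\psi(0)=1$ and $\psi'(0^+)<0$ one has $\inf_{[0,a]}\psi\le 1+\tfrac{a}{2}\psi'(0^+)$, is \emph{false}: take $\psi(\al)=\max\{1-\ep\al,\,M\al-K\}$ with $M$ large and $K$ chosen so the two lines meet in $(0,a)$. Then $\psi'(0^+)=-\ep$ is fixed while $\inf_{[0,a]}\psi\to 1$ as $M\to\infty$, so no bound of the form $1-c\,\ep$ can hold. This is precisely why a pointwise slope bound at $0$ does not by itself produce a uniform $\al_0$, and the appendix contains no lemma of the kind you invoke. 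The mechanism the paper actually uses is different: it writes (via the mean-value inequality and monotonicity of $\phi'_{P,Q}$) $\sup_{P,Q}\phi_{P,Q}(\al)\le 1+\al\,g(\al)$ where $g(\al):=\sup_{P,Q}P_0\bigl(dP/dQ\bigr)^\al\log(dP/dQ)$, observes that $g$ is itself convex (as a supremum of convex functions, since $\al\mapsto r^\al\log r$ has nonnegative second derivative) with $g(0)<0$ by (\ref{eq:KLsep}), and uses upper-semicontinuity of a convex function at the left endpoint of its domain to conclude $g(\al_0)<0$ for some $\al_0>0$. That is the missing idea: take the supremum over $(P,Q)$ \emph{inside} the derivative first, and exploit convexity of the resulting one-variable function, rather than trying to extract a uniform bound from convexity of each individual $\phi_{P,Q}$.
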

(Note that in applications of lemma~\ref{lem:KLequiv} the sets $W_i$ are
\emph{convex hulls} of model subsets $V_i$.)
Due to the fact that Kullback-Leibler divergence dominates Hellinger
distance, the proof of Schwartz's theorem is now immediate (at least,
for models that have $P_0(dP/dQ)<\infty$ for all $P\in V$
and all $Q$ in a Kullback-Leibler neighbourhood of $P_0$ that is small
enough.)
It is clear that Schwartz's theorem does not fully exploit the room that
(\ref{eq:KLsep}) offers because it does not prove posterior consistency
in Kullback-Leibler divergence. The following theorem provides such
an assertion without requiring more of the prior.
\begin{theorem}
\label{thm:KLDcons}
Let $P_0$ and the model be such that for some Kullback-Leibler
neighbourhood $B$ of $P_0$, 
$\sup_{Q\in B}P_0(dP/dQ)<\infty$ for all $P\in\scrP$. Let $\Pi$
be a Kullback-Leibler prior. For any $\ep>0$,
assume that $\{P\in\scrP:-P_0\log(dP/dP_0)\geq\ep\}$
is covered by a finite number $N\geq1$ of model subsets $V_1,\ldots,V_N$
such that,
\begin{equation}
  \label{eq:coKLD}
  \inf_{P\in\co{V_i}} -P_0\log\frac{dP}{dP_0} > 0,
\end{equation}
for all $1\leq i\leq N$. Then for \iid-$P_0$ distributed $X_1,X_2,\ldots$,
\begin{equation}
  \label{eq:KLDcons}
  \Pi\bigl(\,P\in\scrP\,:\,-P_0\log(dP/dP_0)<\ep
    \bigm| X_1,\ldots,X_n\,\bigr)\convas{P_0}1.
\end{equation}
\end{theorem}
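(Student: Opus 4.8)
The plan is to apply Theorem~\ref{thm:main} with the sets $V_i$ as given in the hypothesis and with each $B_i$ taken to be the Kullback-Leibler neighbourhood $B$ appearing in the statement (or a suitable shrinking of it). First I would set $V=\{P\in\scrP:-P_0\log(dP/dP_0)\geq\ep\}$, so that (\ref{eq:KLDcons}) is exactly the assertion $\Pi(V\mid X_1,\ldots,X_n)\convas{P_0}0$. Since $\Pi$ is a Kullback-Leibler prior, Corollary~\ref{cor:KLpriorpred} gives $P_0^n\ll P_n^\Pi$ for all $n\geq1$, so the domination hypothesis of Theorem~\ref{thm:main} is met. The finiteness hypothesis $\sup_{Q\in B_i}P_0(dP/dQ)<\infty$ for all $P\in V_i$ is inherited directly from the standing assumption that $\sup_{Q\in B}P_0(dP/dQ)<\infty$ for \emph{all} $P\in\scrP$.

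The crux is to verify (\ref{eq:power}), i.e.\ $\pi_{P_0}(\co{V_i},B_i)<1$ for each $i$. Here I would invoke Lemma~\ref{lem:KLequiv}: with $W=\co{V_i}$ and $B$ the Kullback-Leibler neighbourhood of $P_0$, and noting $P_0\in B$, the lemma says $\pi_{P_0}(\co{V_i},B)<1$ is equivalent to the strict separation
\[
  \sup_{Q\in B}-P_0\log\frac{dQ}{dP_0}
    < \vinf_{\phantom{|}P\in\co{V_i}}-P_0\log\frac{dP}{dP_0}.
\]
The right-hand side is strictly positive by the hypothesis (\ref{eq:coKLD}). The left-hand side can be made as small as desired — in particular strictly smaller than $\inf_{P\in\co{V_i}}(-P_0\log(dP/dP_0))$ — because $B$ is a Kullback-Leibler neighbourhood: shrinking $B$ to $B_\delta=\{Q:-P_0\log(dQ/dP_0)<\delta\}$ for $\delta$ smaller than the minimum of the (finitely many) positive numbers in (\ref{eq:coKLD}) forces $\sup_{Q\in B_\delta}-P_0\log(dQ/dP_0)\leq\delta<\inf_{P\in\co{V_i}}(-P_0\log(dP/dP_0))$ for every $i$ simultaneously. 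Since $\Pi$ is a Kullback-Leibler prior, $\Pi(B_\delta)>0$, so taking $B_i=B_\delta$ for all $i$ satisfies the remaining requirement $\Pi(B_i)>0$. The finiteness condition $P_0(dP/dQ)^a<\infty$ needed to invoke Lemma~\ref{lem:KLequiv} holds (with $a=1$) by the standing assumption, after possibly shrinking $B$ so that $B_\delta\subset B$.

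With all hypotheses of Theorem~\ref{thm:main} verified for the cover $V_1,\ldots,V_N$ of $V$, we conclude $\Pi(V\mid X_1,\ldots,X_n)\convas{P_0}0$, which is (\ref{eq:KLDcons}). The main obstacle is the uniformity over the finite cover: one needs a \emph{single} $\delta$ (hence a single neighbourhood $B_\delta$ with positive prior mass) that works for all $i\leq N$ at once; this is exactly where finiteness of $N$ is used, since $\min_{1\leq i\leq N}\inf_{P\in\co{V_i}}(-P_0\log(dP/dP_0))>0$. A minor technical point to check is that shrinking $B$ does not destroy the finiteness $\sup_{Q\in B_\delta}P_0(dP/dQ)<\infty$, but this is immediate since $B_\delta\subset B$ and the supremum over a smaller set is no larger.
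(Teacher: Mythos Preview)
Your proposal is correct and follows essentially the same route as the paper's proof: define $B_i$ as a Kullback-Leibler neighbourhood of $P_0$ of radius small enough to force the KL-separation (\ref{eq:KLsep}) with $W=\co{V_i}$, invoke Lemma~\ref{lem:KLequiv} to obtain (\ref{eq:power}), and apply Theorem~\ref{thm:main}. The only cosmetic difference is that the paper allows an $i$-dependent radius $b_i\wedge b$ for $B_i$, whereas you take a single $B_\delta$ with $\delta<\min_i b_i$; your version exploits the finiteness of $N$ slightly more explicitly but is otherwise equivalent. One tiny wording issue: Lemma~\ref{lem:KLequiv} asks for some $a\in(0,1)$ with $P_0(dP/dQ)^a<\infty$, not $a=1$; since $x^a\leq 1+x$ for $x\geq0$ and $a\in(0,1)$, finiteness at $a=1$ immediately gives it for all $a\in(0,1)$, so this is harmless.
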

Compare this formulation with theorem~2 of \cite{Walker04}, which also
asserts consistency in Kullback-Leibler divergence. Although Walker's proof
depends on completely different techniques, the Kullback-Leibler
separation condition of lemma~\ref{lem:KLequiv} appears there as well
(but in a different form, as a limiting lower bound for the Kullback-Leibler
divergence of Walker's `restricted predictive distributions' with
respect to $P_0$). The occurrence of Kullback-Leibler separation from
two perspectives that are technically so remote can be viewed as
mutually supportive and underlines the conclusion that Kullback-Leibler
separation is a fundamental aspect in this context.

To appreciate how a finite cover of Kullback-Leibler-neighbourhoods
may occur in models, consider the following example that relies on relative
compactness with respect to the uniform norm for log-densities.
\begin{example}
Let $\ep>0$ be given and
assume that the complement $V$ of a Kullback-Leibler ball of radius
$\ep>0$ contains $N$ points $P_1,\ldots,P_N$ such that the convex sets,
\[
  V_i=\bigl\{P\in\scrP:\|dP/dP_i-1\|_{\infty}<\ft12\ep\bigr\},
\]
cover $V$. Finiteness of the cover can be guaranteed, for example,
if the model describes data taking values in a fixed
bounded interval in $\RR$ and the associated family of log-densities
is bounded and equicontinuous, by virtue of the Ascoli-Arzel\`a
compactness theorem. (Other ways to find suitable covers refer to
$\|\cdot\|_{\infty}$-entropy or bracketing numbers for log-likelihood
ratios \cite{vdVaart96}.) Then any $P\in \co{V_i}$ satisfies
$\|dP/dP_i-1\|_{\infty}<\ft12\ep$ as well, and hence,
$\log(dP/dP_i)\leq\log(1+\ft12\ep)\leq\ft12\ep$. As a result,
\[
  -P_0\log\frac{dP}{dP_0} \geq \ep - P_0\log\frac{dP}{dP_i} \geq \ft12\ep,
\]
and (\ref{eq:coKLD}) holds. In such models, any prior $\Pi$ satisfying
(\ref{eq:KLprior}) leads to a posterior that is consistent in
Kullback-Leibler divergence.
\end{example}

\subsection{Priors that charge metric balls}
\label{sub:hellpriors}

Quite generally, lemma~\ref{lem:KLequiv} shows that model
subsets are consistently testable {\it if and only if} they can be
separated from neighbourhoods of $P_0$ in Kullback-Leibler
divergence. This illustrates the fundamental nature of Schwartz's prior
mass requirement and undermines hopes for useful priors that charge
different neighbourhoods of $P_0$  in general. However,
lemma~\ref{lem:KLequiv} does not cover all situations and does not
exclude the possibility of gaining freedom in the choice of the prior
by strengthening requirements on the model. In this subsection, we
give examples of `metric' priors together with model conditions that 
validate them in the sense of asymptotic posterior consistency.

Initially, given ($P_0$ and) a suitable neighbourhood $B$, we impose
that for all $Q\in B$ and any $P\in \scrP$, $p/q\in L_2(Q)$ (with
norm denoted $\|\cdot\|_{2,Q}$).
Under this condition the Cauchy-Schwarz inequality leads to,
\[
  \begin{split}
  P_0\Bigl(\frac{p}{q}\Bigr)^{1/2}
    &= \int \Bigl(\frac{p_0}{q}\Bigr)^{1/2}\,p_0^{1/2}p^{1/2}\,d\mu\\
    &= \int p_0^{1/2}\,p^{1/2}\,d\mu
      -\int\Bigl(1-\Bigl(\frac{p_0}{q}\Bigr)^{1/2}\Bigr)\,
      \Bigl(\frac{p_0}{q}\Bigr)^{1/2}\Bigl(\frac{p}{q}\Bigr)^{1/2}\,dQ\\
    &\leq  1-\frac{1}{2}H(P_0,P)^2
      +H(P_0,Q)\, \Bigl\|\frac{p_0}{q}\Bigr\|_{2,Q}^{1/2}\,
        \Bigl\|\frac{p}{q}\Bigr\|_{2,Q}^{1/2}.
  \end{split}
\]
To enable the use of priors that charge Hellinger balls instead of
KL-neighbourhoods, we strengthen
the above bound to a uniform bound over the model, making it possible
to separate $B$ from $V$ in Hellinger distance to prove existence of
uniform tests. Combined with lemma~\ref{lem:cons} this leads to the
following theorem.
\begin{theorem}
\label{thm:hell}
Let the model $\scrP$ be totally
bounded with respect to the Hellinger metric. Assume also that there
exists a constant $L>0$ and a Hellinger ball $B'$ centred on $P_0$ such
that for all $P\in\scrP$ and $Q\in B'$,
\begin{equation}
  \label{eq:elltwoball}
  \Bigl\|\frac{p}{q}\Bigr\|_{2,Q} = \Bigl(\int \frac{p^2}{q}\,d\mu\Bigr)^{1/2}
  < L.
\end{equation}
Finally assume that for any Hellinger neighbourhood $B$ of $P_0$,
$\Pi(B)>0$. Then the posterior is Hellinger consistent, $P_0$-almost-surely.
\end{theorem}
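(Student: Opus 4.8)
The plan is to apply Theorem~\ref{thm:main} for an arbitrary $\ep>0$, taking $V=\{P\in\scrP:H(P,P_0)>\ep\}$, a finite cover of $V$ by small Hellinger balls, and a single small Hellinger ball centred on $P_0$ in the role of every $B_i$. Since $\Pi$ charges all Hellinger neighbourhoods of $P_0$, Proposition~\ref{prop:dompriorpred} secures the standing hypothesis $P_0^n\ll P_n^\Pi$ for all $n$; and since $P_0\in\scrP$, bound~(\ref{eq:elltwoball}) applied at $P=P_0$ also gives $\|p_0/q\|_{2,Q}<L$ for every $Q\in B'$. Fix $\rho\in(0,\ep/4)$; by Hellinger total boundedness $V$ is contained in a finite union of balls $V_1,\dots,V_N$ of radius $\rho$, and after discarding those disjoint from $V$ we may write $V_i=\{P:H(P,P_i)<\rho\}$ with $H(P_i,P_0)>\ep-\rho$.

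Two convexity observations take care of the hulls $\co{V_i}$ occurring in condition~(\ref{eq:power}). First, Hellinger balls are convex: concavity of the Hellinger affinity $P\mapsto\rho(P,P_i)$ (itself a consequence of concavity of $\sqrt{\,\cdot\,}$) gives $H^2\bigl(\sum_j\lambda_jP_j,P_i\bigr)\le\sum_j\lambda_jH^2(P_j,P_i)$, so $\co{V_i}$ remains within Hellinger distance $\rho$ of $P_i$ and therefore, by the triangle inequality, $H(P,P_0)>\ep-2\rho>\ep/2$ for all $P\in\co{V_i}$. Second, convexity of $x\mapsto x^2$ propagates~(\ref{eq:elltwoball}) to mixtures: for $P=\sum_j\lambda_jP_j$ with $P_j\in V_i$ and $Q\in B'$, $\|p/q\|_{2,Q}^2\le\sum_j\lambda_j\|p_j/q\|_{2,Q}^2<L^2$. (Neither observation requires $\co{V_i}\subseteq\scrP$.)

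Now choose $\delta>0$ with $B:=\{Q:H(Q,P_0)<\delta\}\subseteq B'$ and $\tfrac12(\ep/2)^2-L^2\delta>0$; by hypothesis $\Pi(B)>0$. For $Q\in B$ and $P\in\co{V_i}$, the displayed estimate preceding the theorem, together with $\|p_0/q\|_{2,Q}<L$ and $\|p/q\|_{2,Q}<L$, yields
\[
  P_0\Bigl(\frac{p}{q}\Bigr)^{1/2}
  \le 1-\tfrac12 H(P_0,P)^2
      +H(P_0,Q)\,\Bigl\|\frac{p_0}{q}\Bigr\|_{2,Q}\Bigl\|\frac{p}{q}\Bigr\|_{2,Q}
  < 1-\tfrac12\Bigl(\frac{\ep}{2}\Bigr)^2+L^2\delta<1 ,
\]
so with $\al=\tfrac12$ in the infimum~(\ref{eq:testingpower}) we obtain $\pi_{P_0}(\co{V_i},B)<1$, i.e.~(\ref{eq:power}) holds with $B_i=B$. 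The remaining hypothesis of Theorem~\ref{thm:main}, $\sup_{Q\in B}P_0(p/q)<\infty$ for $P\in V_i$, follows from Cauchy--Schwarz: $P_0(p/q)=\int(p_0/q)(p/q)\,dQ\le\|p_0/q\|_{2,Q}\|p/q\|_{2,Q}<L^2$. Hence Theorem~\ref{thm:main} gives $\Pi(H(P,P_0)>\ep\mid X_1,\dots,X_n)\convas{P_0}0$; as $\ep>0$ was arbitrary, the posterior is Hellinger consistent at $P_0$, $P_0$-almost-surely.

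The only genuinely delicate point is the treatment of the convex hulls $\co{V_i}$: a mixture of distributions each far from $P_0$ might a priori sit close to $P_0$ (and might fall outside $\scrP$), which would wreck both the metric separation and the $L_2$-control needed for~(\ref{eq:power}). Convexity of Hellinger balls removes the first danger and convexity of $x\mapsto x^2$ the second, and it is exactly the \emph{uniform} constant $L$ in~(\ref{eq:elltwoball})---as opposed to mere pointwise finiteness of $\|p/q\|_{2,Q}$---that makes this second propagation, and hence the $n$-independent testing bound underlying Theorem~\ref{thm:main}, available.
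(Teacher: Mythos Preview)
Your proof is correct and follows essentially the same route as the paper's: cover the complement of a Hellinger ball by finitely many small Hellinger balls, exploit convexity of Hellinger balls so that $\co{V_i}$ stays bounded away from $P_0$, use the Cauchy--Schwarz estimate displayed just before the theorem with $\al=\tfrac12$, and choose $B$ to be a sufficiently small Hellinger ball inside $B'$. Your treatment is in fact a bit more explicit than the paper's on two points---the propagation of the $L_2$ bound~(\ref{eq:elltwoball}) to mixtures via convexity of $x\mapsto x^2$, and the use of $P_0\in\scrP$ to secure $\|p_0/q\|_{2,Q}<L$---both of which the paper uses but leaves implicit.
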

As a side-remark, note that it is possible that (\ref{eq:elltwoball})
is not satisfied without extra conditions on $Q$. In that case impose
that $B$ is \emph{included} in a Hellinger ball, while satisfying other
conditions as well; the theorem remains valid as long as we also change
the prior, \ie\ as long as $\Pi(B)>0$ is maintained.

Varying on this theme, choose $1\leq r<\infty$. Analogous to the
Hellinger metric ($r=2$), define, for
all $P,Q$ probability measures, Matusita's \emph{$r$-metric distance}
\cite{Matusita71},
\[
  d_r(P,Q) = \Bigl(\int \bigl|\,p^{1/r}-q^{1/r} \bigr|^rd\mu \Bigr)^{1/r},
\]
(based on any $\sigma$-finite $\mu$ that dominates $P$ and $Q$).
Applying H\"older's inequality where we applied Cauchy-Schwarz before
and dominating the constant of the rest-term in a different way, we arrive
at the following theorem concerning priors that charge $d_r$-balls.
\begin{theorem}
\label{thm:drcons}
Let $1\leq r<\infty$ be given and let the model $\scrP$ be
a totally bounded metric space with respect to $d_r$. Let $X_1, X_2,\ldots$
be \iid-$P_0$ distributed for some $P_0\in\scrP$. Assume that the prior
is such that $P_0^n \ll P_{n}^{\Pi}$, for all $n\geq1$ and satisfies,
\begin{equation}
  \label{eq:drprior}
  \Pi\bigl(\,P\in\scrP\,:\,d_r(P_0,P)<\delta\,\bigr) > 0,
\end{equation}
for all $\delta>0$. In addition, assume that there is an $L>0$ and
a $d_r$-ball $B$ such that for all $P\in\scrP$ and $Q\in B$,
$P_0({p/q})^{{s/r}\vee1} \leq L^s$, where $1/r+1/s=1$. Then the posterior
is consistent in the $d_r$-metric, $P_0$-almost-surely.
\end{theorem}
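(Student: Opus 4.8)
The plan is to mimic the proof structure used for theorem~\ref{thm:hell}, replacing the Cauchy-Schwarz step by H\"older's inequality and then invoking lemma~\ref{lem:cons} via theorem~\ref{thm:main}. First I would fix $\ep>0$ and set $V=\{P\in\scrP:d_r(P_0,P)\geq\ep\}$; since $(\scrP,d_r)$ is totally bounded, $V$ is covered by finitely many $d_r$-balls $V_1,\ldots,V_N$ of radius $\delta<\ep/3$ (say). For each $i$ I would take $B_i=B\cap\{Q:d_r(P_0,Q)<\delta\}$, which by (\ref{eq:drprior}) has $\Pi(B_i)>0$, and which inherits the uniform bound $P_0(p/q)^{(s/r)\vee1}\leq L^s$ from $B$. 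It then suffices, by theorem~\ref{thm:main}, to verify $\pi_{P_0}(\co{V_i},B_i)<1$ together with the finiteness condition $\sup_{Q\in B_i}P_0(dP/dQ)<\infty$ for $P\in V_i$; the latter follows from the hypothesised $L^s$-bound (note $(s/r)\vee1\geq1$, so the first moment is controlled) and H\"older.

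The analytic heart is the estimate of $P_0(p/q)^{1/r}$ for $Q\in B_i$ and $P\in\co{V_i}$, paralleling the displayed Cauchy-Schwarz computation preceding theorem~\ref{thm:hell}. I would write
\[
  P_0\Bigl(\frac{p}{q}\Bigr)^{1/r}
  = \int \Bigl(\frac{p_0}{q}\Bigr)^{1/r}p_0^{1-1/r}p^{1/r}\,d\mu
  = \int p_0^{1-1/r}p^{1/r}\,d\mu
   -\int\Bigl(1-\Bigl(\frac{p_0}{q}\Bigr)^{1/r}\Bigr)
     \Bigl(\frac{p_0}{q}\Bigr)^{1/r}\Bigl(\frac{p}{q}\Bigr)^{1/r}\,dQ.
\]
The first term I would bound using the elementary inequality relating $\int p_0^{1-1/r}p^{1/r}\,d\mu$ to $1-c_r\,d_r(P_0,P)^r$ for a constant $c_r>0$ (this is the $r$-metric analogue of the affinity bound $\int\sqrt{p_0p}\,d\mu\leq1-\thalf H(P_0,P)^2$; it follows from convexity of $t\mapsto|t^{1/r}-1|^r$ or directly from expanding $|p_0^{1/r}-p^{1/r}|^r$). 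For the rest-term I would apply H\"older with exponents $r$ and $s$: the factor $(p/q)^{1/r}$ lies in $L^s(Q)$ with $\|(p/q)^{1/r}\|_{s,Q}^s=P_0(\cdots)$—wait, more carefully, $\int(p/q)^{s/r}\,dQ=\int p^{s/r}q^{1-s/r}\,d\mu$, which is dominated by the hypothesised bound $L^s$ (after absorbing the $p_0$-indicator and checking the exponent $s/r$ against $(s/r)\vee1$); and the factor $(1-(p_0/q)^{1/r})(p_0/q)^{1/r}$ is controlled in $L^r(Q)$ by a multiple of $d_r(P_0,Q)$, which is smaller than $\delta$ on $B_i$. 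Combining, $\sup_{P\in\co{V_i}}\sup_{Q\in B_i}P_0(p/q)^{1/r}\leq 1-c_r\ep^r/3^r+C\,L\,\delta$, which is $<1$ once $\delta$ is chosen small; taking $\al=1/r$ in the definition of $\pi_{P_0}$ then gives (\ref{eq:power}).

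The main obstacle I anticipate is pinning down the two inequalities that replace the Hellinger-specific ones: namely (i) the lower affinity-type bound $\int p_0^{1-1/r}p^{1/r}\,d\mu\leq 1-c_r\,d_r(P_0,P)^r$ with an explicit $c_r$ and, crucially, its uniform validity as $P$ ranges over $\co{V_i}$ rather than $V_i$ (convexity of $P\mapsto d_r(P_0,P)^r$, or rather a Jensen argument on the affinity functional, should handle the convex hull); and (ii) the bookkeeping on the exponent in the H\"older rest-term, making sure that the quantity $\int p^{s/r}q^{1-s/r}p_0$-weighted integral that appears is genuinely dominated by the hypothesised $P_0(p/q)^{(s/r)\vee1}\leq L^s$ in both regimes $s\leq r$ and $s>r$ (i.e.\ $r\leq2$ and $r\geq2$), since $1/r+1/s=1$ forces $s\geq r$ exactly when $r\leq2$. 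Once these are in hand the remainder is the routine extraction of a test sequence via lemmas~\ref{lem:ntests} and~\ref{lem:HTBnd} and an appeal to theorem~\ref{thm:main}, exactly as in the proof of theorem~\ref{thm:hell}.
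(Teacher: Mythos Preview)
Your overall strategy---decompose $P_0(p/q)^{1/r}$ as the $r$-affinity $\rho_{1/r}(P_0,P)=\int p_0^{1-1/r}p^{1/r}\,d\mu$ plus a rest term, bound the affinity away from $1$ via $d_r(P_0,P)$, control the rest term by H\"older in terms of $d_r(P_0,Q)$, then invoke theorem~\ref{thm:main}---is exactly the paper's route. The two points you flag as obstacles are also precisely where your execution diverges from the paper's.

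For obstacle~(i), the paper does not attempt an elementary inequality of the form $\rho_{1/r}\leq1-c_r\,d_r^r$ (which is an identity only at $r=2$ and not obviously available for general~$r$). Instead it cites Toussaint's bound,
\[
  \rho_{1/r}(P_0,P)\leq\Bigl[\frac{2}{r(r-1)}\Bigr]^{1/2}
    \bigl(1-\ft14\,d_r(P_0,P)^{2r}\bigr)^{1/2},
\]
and then uses $(1-x)^{1/2}\leq1-\ft12 x$. Your convex-hull worry is disposed of in the paper by taking the $V_i$ to be $d_r$-balls and asserting these are convex, so $\co{V_i}=V_i$.

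For obstacle~(ii), your H\"older pairing in $L^r(Q)\times L^s(Q)$ does not land on the stated hypothesis: $\|(p/q)^{1/r}\|_{s,Q}^s=\int(p/q)^{s/r}\,dQ$ is \emph{not} $P_0(p/q)^{s/r}$, and your other factor computes to $\int|q^{1/r}-p_0^{1/r}|^r(p_0/q)\,d\mu$, which equals $d_r(P_0,Q)^r$ only after an extra $\|p_0/q\|_\infty$-type bound you have not assumed. The paper's ``different way'' of dominating the rest term avoids both problems: write
\[
  P_0\Bigl(\frac{p}{q}\Bigr)^{1/r}-\rho_{1/r}(P_0,P)
  =\int \bigl(p_0^{1/r}-q^{1/r}\bigr)\,p_0^{1-1/r}
    \Bigl(\frac{p}{q}\Bigr)^{1/r}\,d\mu,
\]
and apply H\"older with exponents $r,s$ against $\mu$. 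Since $(1-1/r)s=1$, the second factor is exactly $\bigl(P_0(p/q)^{s/r}\bigr)^{1/s}$ and the first is $d_r(P_0,Q)$; this is why the hypothesis is phrased in terms of $P_0(p/q)^{s/r\vee1}$.
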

\begin{remark}
\label{rem:netprior}
For the models under discussion, we note the following general
construction of so-called \emph{net priors}
\cite{LeCam72b,Ghosal97,Ghosal00,Kleijn04}: denote the metric on
$\scrP$ by $d$. Initially, assume that $\scrP$ has finite $d$-metric
entropy numbers. Let $(\eta_m)$ be any sequence such that
$\eta_m>0$ for all $m\geq1$ and $\eta_m\downarrow0$. For fixed $m\geq1$,
let $P_1,\ldots,P_{M_m}$ denote an $\eta_m$-net for $\scrP$
and define $\Pi_m$ to be the measure that places mass $1/M_m$ at every
$P_i$, ($1\leq i\leq M_m$). Choose a sequence $(\lambda_m)$ such that
$\lambda_m>0$ for all $m\geq1$ and $\sum_{m\geq1}\lambda_m=1$, to define
the net prior $\Pi=\sum_{m\geq1}\lambda_m\,\Pi_m$.
In case $\scrP$ is not totally bounded, one may generalize the above
construction by choosing an increasing sequence $(K_m)$ of compact
submodels, each of which is totally bounded so that for every
$m\geq1$, a $\Pi_m$ with finite support inside $K_m$ can be defined
like above. Any net prior is Radon by construction and if $\scrP$ is
totally bounded (or, if $\scrP$ is separable and $\scrP$ equals the
closure of $\cup_mK_m$) a net prior assigns non-zero mass to every
open set. In addition, lower-bounds for prior mass in metric balls
are proportional to inverses of upper bounds for metric entropy
numbers, provided we choose $(\lambda_m)$ appropriately, which is
very helpful when one is interested in rates of convergence
\cite{Ghosal00,Kleijn04}. In the Hellinger case, a net prior satisfies
(\ref{eq:dompriorpred}) and theorem~\ref{thm:hell} applies if
(\ref{eq:elltwoball}) holds.
\end{remark}
Net priors, or more generally, Borel priors of \emph{full support}
(that is, $\Pi(U)>0$ for every open $U\subset\scrP$) are also helpful
if one is interested in the construction of Kullback-Leibler priors,
at least, if the corresponding topology is fine enough.
\begin{lemma}
\label{lem:KLcont}
Let $\scrP$ be a topological space. If for every $P\in\scrP$,
the Kullback-Leibler divergence $\scrP\rightarrow\RR:Q\mapsto-P\log(dQ/dP)$
is continuous, then a Borel prior of full support is a Kullback-Leibler
prior. If, in addition, $\scrP$ is metrizable, all net priors of
full support are Kullback-Leibler priors.
\end{lemma}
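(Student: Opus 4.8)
The plan is to observe that the continuity hypothesis converts each Kullback-Leibler neighbourhood of $P_0$ into an \emph{open} set, after which full support of the prior does all the remaining work. Note first that the statement "$\Pi$ is a Kullback-Leibler prior" is made relative to a fixed $P_0\in\scrP$, and that the hypothesis "for every $P\in\scrP$" is exactly what is needed to cover an arbitrary such $P_0$; in the argument below I fix one $P_0\in\scrP$ and use only continuity of the single map $g\colon Q\mapsto -P_0\log(dQ/dP_0)$.

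\textbf{First claim.} For $\delta>0$ put $U_\delta=\{Q\in\scrP:-P_0\log(dQ/dP_0)<\delta\}=g^{-1}((-\infty,\delta))$. Since $g$ is continuous and $(-\infty,\delta)$ is open, $U_\delta$ is open. Moreover $g(P_0)=-P_0\log 1=0<\delta$, so $P_0\in U_\delta$ and $U_\delta$ is a nonempty open subset of $\scrP$; being open it is in particular Borel, so $\Pi(U_\delta)$ is defined. As $\Pi$ has full support, $\Pi(U_\delta)>0$. Since $\delta>0$ was arbitrary, this is precisely condition~(\ref{eq:KLprior}), so $\Pi$ is a Kullback-Leibler prior.

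\textbf{Second claim.} Suppose in addition that $\scrP$ is metrizable. A net prior in the sense of remark~\ref{rem:netprior} is, by construction, a countable convex combination $\Pi=\sum_m\lambda_m\Pi_m$ of finitely supported discrete measures on $\scrP$, hence a Borel probability measure. If such a net prior has full support, the first claim applies verbatim and shows it is a Kullback-Leibler prior.

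There is essentially no obstacle of substance here; the only points that require a moment's care are \emph{(i)} the interpretation of the continuity hypothesis when $-P_0\log(dQ/dP_0)$ can equal $+\infty$ — the argument uses only that $g^{-1}([0,\delta))$ is open and contains $P_0$, which holds under continuity into $\RR$ or into $[0,\infty]$ with its order topology — and \emph{(ii)} checking that a net prior really is a Borel measure, which is immediate once $\scrP$ carries a metric so that singletons, and hence the supporting nets, are Borel. Neither requires further work.
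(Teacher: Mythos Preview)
Your proof is correct and follows the same approach as the paper. The paper's proof is a single sentence---``Continuity means that every Kullback-Leibler neighbourhood of $P_0$ contains an open neighbourhood of $P_0$''---and your argument is a careful expansion of exactly this observation, together with the immediate consequence that full support gives positive mass; your additional remarks on the $+\infty$ issue and on net priors being Borel are sound but not needed beyond what the paper assumes implicitly.
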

When discussing consistency, requirements on the model like
(\ref{eq:elltwoball}) are present to guarantee continuity of the
Kullback-Leibler-divergence. For example, the perceptive reader
may have recognized in
(\ref{eq:elltwoball}) sufficiency to invoke theorem~5 of \cite{Wong95}
which provides an upper bound for the Kullback-Leibler divergence
in terms of the Hellinger distance. The latter is a stronger,
Lipschitz-like variation on the continuity condition of the above
lemma.


\section{Posterior consistency on separable models}
\label{sec:sep}

Requiring \emph{finiteness} of the order of the cover in
theorem~\ref{thm:main} and lemma~\ref{lem:cons} is somewhat crude.
Besides Le~Cam's construction of example~\ref{ex:lecamdim}, there are
several ways out: firstly, in subsection~\ref{sub:barron} we explore
the possibility of letting a sieve of totally bounded submodels
approximate the full model analogous to Barron's theorem. Secondly,
Hellinger consistency of the posterior on separable models formed
the assertion of a remarkable theorem of Walker for a Kullback-Leibler
prior that also satisfies a summability condition \cite{Walker04}. In
subsection~\ref{sub:walker} we show that variations on Walker's
theorem can be derived with the methods of section~\ref{sec:cons}.

\subsection{Generalization to sieves}
\label{sub:barron}

If the prior is Radon (\eg\ when the model is a Polish space), inner
regularity says that the model can be approximated in prior measure
by compact submodels. Since the latter are totally bounded, a proof is
conceivable based on an approximating sieve of relatively compact submodels.
If we require that the ingredients of the above argument satisfy
certain bounds, a theorem of this nature is possible.
\begin{theorem}
\label{thm:barron}
Let $X_1, X_2, \ldots$ be $\iid-P_0$ for some $P_0\in\scrP$ and let
$V$ be given. Assume that $P_0^n\ll P_n^\Pi$ for all $n\geq1$ and that
there exist constants $K,L>0$ and a
sequence of submodels $(\scrP_n)$ such that for large
enough $n\geq1$,
\begin{itemize}
\item[(i.)]
there is a cover $V_1,\ldots,V_{N_n}$ for $V\cap\scrP_n$ of order
$N_n\leq\exp(\ft12Ln)$ with tests $\phi_{1,n},\ldots,\phi_{N_n,n}$
such that,
\[
P_0^n\phi_{i,n} +
  \sup_{P\in V_i} P_0^n \frac{dP^n}{dP_n^\Pi}(1-\phi_{i,n})
    \leq e^{-nL},
\]
for all $1\leq i\leq N_n$;
\item[(ii.)] the prior mass $\Pi(\scrP\setminus\scrP_n)\leq\exp(-nK)$ and,
\begin{equation}
  \label{eq:barron}
  \sup_{P\in V\setminus\scrP_n}\sup_{Q\in B}
    P_0\Bigl(\frac{dP}{dQ}\Bigr)\leq e^{\ft{K}2},
\end{equation}
for some model subset $B$ such that $\Pi(B)>0$.
\end{itemize}
Then $\Pi(\,V\,|\,X_1,\ldots,X_n\,)\convas{P_0}0$.
\end{theorem}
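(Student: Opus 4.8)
The plan is to split the posterior mass $\Pi(V|X_1,\ldots,X_n)$ into a contribution from the sieve $\scrP_n$ and a contribution from its complement, and to handle the two pieces by the two hypotheses respectively. Write $\Pi(V|X_1,\ldots,X_n) = \Pi(V\cap\scrP_n|X_1,\ldots,X_n) + \Pi(V\cap\scrP_n^c|X_1,\ldots,X_n)$. The first term is bounded using condition (i.) exactly as in lemma~\ref{lem:cons}: the cover $V_1,\ldots,V_{N_n}$ has order $N_n\leq\exp(\ft12 Ln)$ and each piece admits a test with testing power $\leq e^{-nL}$, so summing over the cover and running the argument of lemma~\ref{lem:cons} (splitting on the event where the denominator of the posterior is not exponentially small, using the Markov/Borel--Cantelli machinery on $P_0^n\phi_{i,n}$ and on the numerator terms) gives a bound of order $N_n e^{-nL}\leq e^{-\ft12 Ln}$ for $P_0^n\Pi(V\cap\scrP_n|X_1,\ldots,X_n)$, which is summable and hence yields $\Pi(V\cap\scrP_n|X_1,\ldots,X_n)\convas{P_0}0$ by Borel--Cantelli.

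The second term is where condition (ii.) and Barron's negligible-prior-mass idea enter. The point is that $V\cap\scrP_n^c$ is a \emph{single} model subset against which we can test $P_0$ using $B$ via lemma~\ref{lem:HTBnd}: since $\Pi(B)>0$ and $\sup_{Q\in B}P_0(dP/dQ)\leq e^{K/2}$ on $V\cap\scrP_n^c$ by (\ref{eq:barron}), lemma~\ref{lem:HTBnd} (with $\co{V\cap\scrP_n^c}$ in place of $\co{V}$, noting the Hellinger-transform bound is controlled by the first moment bound) furnishes a test $\psi_n$ with
\[
  P_0^n\psi_n + \sup_{P\in V\cap\scrP_n^c} P_0^n\frac{dP^n}{dP_n^\Pi}(1-\psi_n)
  \leq \vinf_{0\leq\al\leq1}\Pi(B)^{-\al}\Bigl(e^{\al K/2}\Bigr)^n.
\]
Here one must be slightly careful: the crude choice $\al=1$ gives only $\Pi(B)^{-1}e^{nK/2}$, which grows; instead one observes that $\pi_{P_0}(\co{V\cap\scrP_n^c},B)\leq 1$ automatically (a first moment $\leq$ something is not enough for strict inequality), so the test alone does not kill this term. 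The resolution, following Barron, is to combine the test bound with the crude prior-mass bound $\Pi(\scrP_n^c)\leq e^{-nK}$: on the complement of the test and the complement of the small-denominator event, $\Pi(V\cap\scrP_n^c|X_1,\ldots,X_n)\leq e^{nc}\int_{\scrP_n^c}(dP^n/dP_n^\Pi)\,d\Pi(P)\cdot(\ldots)$, and taking $P_0^n$-expectation the numerator integrates to at most $e^{n\cdot K/2}\Pi(\scrP_n^c)\leq e^{-nK/2}$, again summable. The constant $c$ from the denominator lower bound must be chosen smaller than $K/2$ and smaller than $L/2$; this is legitimate because the denominator lower bound in lemma~\ref{lem:cons} holds for \emph{every} $c>0$ on an event of probability $\to1$ (in fact with $P_0^n$-summable complement when $c$ is tied to the Kullback-Leibler or predictive-matching structure), so one simply fixes $c=\ft14(K\wedge L)$ at the outset.

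I expect the main obstacle to be the bookkeeping in the second term: unlike in lemma~\ref{lem:cons}, the subset $V\cap\scrP_n^c$ changes with $n$, and the bound (\ref{eq:barron}) gives control of the first moment $P_0(dP/dQ)$ rather than strict testing power, so one cannot simply invoke theorem~\ref{thm:main}. One must interleave three estimates at the same time — the test $\psi_n$, the denominator lower bound $P_n^\Pi(X_1,\ldots,X_n)\geq e^{-nc}$ (valid off a $P_0^n$-summable event), and the Markov bound on $\int_{\scrP_n^c}(dP^n/dP_n^\Pi)\,d\Pi$ — choosing $c$ strictly between $0$ and $\ft12(K\wedge L)$ so that every exponential rate comes out negative, then concluding by Borel--Cantelli. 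The first term is essentially a restatement of lemma~\ref{lem:cons} with the $n$-dependent cover, which is routine once one checks that $N_n e^{-nL}$ is summable; the delicate interplay of the two exponential budgets $K$ and $L$ in the second term is the real content.
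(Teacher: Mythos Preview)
Your treatment of the first term (the sieve part $V\cap\scrP_n$) is essentially correct in outcome: summing the test bounds over the cover gives $P_0^n\Pi(V\cap\scrP_n\mid X_1,\ldots,X_n)\leq N_n e^{-nL}\leq e^{-nL/2}$, which is summable. (Your description of lemma~\ref{lem:cons} as ``splitting on the event where the denominator is not exponentially small'' is a misreading, though; that lemma never isolates the denominator, it works directly with the ratio $dP^n/dP_n^\Pi$ in $P_0^n$-expectation. But the conclusion you reach for the first term is the same.)

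The second term is where your proposal has a genuine gap. You invoke an event $\{\text{denominator}\geq e^{-nc}\}$ whose complement is $P_0^n$-summable, and you attribute this to lemma~\ref{lem:cons} or to ``Kullback--Leibler or predictive-matching structure''. But theorem~\ref{thm:barron} assumes \emph{neither} a KL prior nor any matching condition; the only information about $B$ is $\Pi(B)>0$ and the moment bound~(\ref{eq:barron}). With these hypotheses alone there is no guarantee that $\int\prod p(X_i)\,d\Pi(P)\geq e^{-nc}\prod p_0(X_i)$ eventually almost surely for any $c>0$, so the event you condition on may fail to have summable complement, and the argument does not close.

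The paper's proof avoids this entirely by never separating numerator and denominator. One writes
\[
  P_0^n\Pi(V\cap\scrP_n^c\mid X_1,\ldots,X_n)
  =\int_{V\cap\scrP_n^c} P_0^n\Bigl(\frac{dP^n}{dP_n^\Pi}\Bigr)\,d\Pi(P)
  \leq \frac{1}{\Pi(B)}\int_{\scrP_n^c}
    P_0^n\Bigl(\frac{dP^n}{dP_n^{\Pi|B}}\Bigr)\,d\Pi(P),
\]
using $dP_n^\Pi\geq\Pi(B)\,dP_n^{\Pi|B}$. By Jensen (convexity of $x\mapsto x^{-1}$) and Fubini, $P_0^n(dP^n/dP_n^{\Pi|B})\leq\sup_{Q\in B}\bigl[P_0(dP/dQ)\bigr]^n\leq e^{nK/2}$ for $P\in V\cap\scrP_n^c$ by~(\ref{eq:barron}), so the whole expression is at most $\Pi(B)^{-1}\Pi(\scrP_n^c)\,e^{nK/2}\leq\Pi(B)^{-1}e^{-nK/2}$. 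No test $\psi_n$, no denominator event, no KL prior: the factor $\Pi(\scrP_n^c)$ appears automatically because the integral is over $\scrP_n^c$, and it beats the growth $e^{nK/2}$ precisely because of the balance built into condition~(ii.). This is the step you were reaching for but did not quite locate; once you use the local prior predictive $P_n^{\Pi|B}$ in place of a denominator lower bound, the proof is a two-line computation.
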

Condition {\it (i.)} of theorem~\ref{thm:barron} represents
condition (\ref{eq:testcond}) in the present context, embedding
the construction illustrated previously in a sequence of submodels
$\scrP_n$. Consequently existence proofs for tests and upper bounds
for testing power of the preceding subsections remain applicable. More
particularly, condition {\it (i.)} has the following alternative.
\begin{itemize}
\item[{\it (i'.)}] 
{\it there exist a model subset $B$ with $\Pi(B)>0$ and a cover
$V_1,\ldots,V_{N_n}$ for $V\cap\scrP_n$ of order $N_n\leq\exp(\ft12Ln)$,
such that for every $1\leq i\leq N_n$,
\[
  \pi_{P_0}\bigl(\,\co{V_i},B\,)\leq e^{-L},
\]
and $\sup_{Q\in B} P_0(dP/dQ)<\infty$ for all $P\in V_i$.}
\end{itemize}
Condition {\it (ii.)} of theorem~\ref{thm:barron} requires negligibly
small prior mass outside the sieve, where `negligibility' is
determined by inequality (\ref{eq:barron}). If we think of $B$ as a
small neighbourhood around $P_0$, it appears that the
freedom to choose $B$ enables upper bounds for the \lhs\ of
(\ref{eq:barron}) arbitrarily close to one (\ie\ to satisfy
(\ref{eq:barron}), $K$ can be chosen arbitrarily close to zero).  In
such cases, condition {\it (ii.)} reduces to the requirement that
$\Pi(\scrP\setminus\scrP_n)$ decreases exponentially, which is
Barron's original
requirement on the prior mass outside the sieve (see, for example,
\cite{Barron88a}). The following example illustrates this point.
\begin{example}
Assume that $X_1,X_2,\ldots$ are \iid-$P_0$ for some $P_0$ in a
model $\scrP$ that is dominated by a $\sigma$-finite measure $\mu$. 
Consider a prior $\Pi$ that charges all $L_\infty(\mu)$-balls around
$\log p_0$ (where $p_0,p$ denote the $\mu$-densities for $P_0,P$
respectively):
\[
  \Pi\bigl(\,P\in\scrP\,:\,\|\log p - \log p_0\|_{\infty}<\ep\,\bigr)>0,
\]
for all $\ep>0$. Note that, for all $P\in\scrP$,
\[
  P_0\Bigl(\frac{dP}{dQ}\Bigr)=\int\frac{p_0\,p}{q}\,d\mu
    =\int\frac{p_0}{q}\,dP\leq e^\ep,
\]
whenever $\|\log q - \log p_0\bigr\|_{\infty}\leq\ep$. Hence, a sieve
$(\scrP_n)$ satisfying condition {\it (i.)} such that
$\Pi(\scrP\setminus\scrP_n)\leq\exp(-nK')$ for \emph{some} small
$K'>0$ would suffice in this case and similar ones.
\end{example}
A generalization of condition {\it (ii.)} of theorem~\ref{thm:barron}
involving $n$-dependent choices for $B$ can be found in
appendix~\ref{app:proofs}. Theorem~\ref{thm:barron} is applied in
the support boundary problem of section~\ref{sec:boundary}, see
remark~\ref{rem:barron}.

\subsection{Variations on Walker's theorem}
\label{sub:walker}

In this subsection we abandon constructions based on finite covers 
altogether and require only that the cover is \emph{countable}.  
A natural setting arises when we consider models that are
\emph{separable} in some metric topology, in which case countable
covers by balls of any radius exist.
Like theorem~\ref{thm:barron}, the most notable change
in perspective that the relevant consistency theorem implies, is that,
aside from \emph{lower} bounds for prior mass (\eg\
Kullback-Leibler-priors, net priors, etc.), conditions also include an
\emph{upper} bound. 
\begin{theorem}
\label{thm:sep}
Let $\scrP$ and $\Pi$ be given and
assume that $P_0^n\ll P_n^\Pi$ for all $n\geq1$. Let $V$ be a
model subset, with a countable cover $V_1,V_2,\ldots$
and $B_1,B_2,\ldots$ such that for all $i\geq1$,
we have $\Pi(B_i)>0$ and for all $P\in V_i$,
$\sup_{Q\in B_i} P_0(dP/dQ)<\infty$. Then,
\begin{equation}
  \label{eq:prewalker}
  P_0^n \Pi(V|X_1,\ldots,X_n) 
  \leq \sum_{i\geq1} \inf_{0\leq\al\leq1} \frac{\Pi(V_i)^\al}{\Pi(B_i)^\al}
    \pi_{P_0}\bigl(\co{V_i}, B_i;\al\bigr)^n.
\end{equation}
\end{theorem}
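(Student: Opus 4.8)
The plan is to reduce the countable-cover statement to a sum of the finite-cover estimates already in hand, by first proving the bound for each $V_i$ separately and then summing. Fix $i\geq1$. By Lemma~\ref{lem:HTBnd} applied with $V=V_i$ and $B=B_i$ — whose hypotheses $\Pi(B_i)>0$ and $\sup_{Q\in B_i}P_0(dP/dQ)<\infty$ for all $P\in V_i$ are exactly what we have assumed — there is a test $\phi_{i,n}$ with
\[
  P_0^n\phi_{i,n}
  + \sup_{P\in V_i}P_0^n\frac{dP^n}{dP_n^\Pi}(1-\phi_{i,n})
  \leq \vinf_{\phantom{|}0\leq\al\leq1}\Pi(B_i)^{-\al}
    \int\Bigl[\sup_{P\in\co{V_i}}P_0\Bigl(\frac{dP}{dQ}\Bigr)^\al\Bigr]^n d\Pi(Q|B_i).
\]
Bounding the inner supremum over $Q\in B_i$ and then the integral trivially gives the right-hand side of~(\ref{eq:prewalker}) \emph{without} the $\Pi(V_i)^\al$ factor; the missing factor is supplied in the next step.

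The key extra input is the elementary bound $P_0^n\Pi(V_i\mid X_1,\ldots,X_n)\leq \Pi(V_i)\,(\text{something})$, obtained by inserting $\Pi(V_i)^\al$ by hand. Concretely, I would run the denominator argument of Lemma~\ref{lem:cons} but keep the factor $\Pi(V_i)$ that arises from $\Pi(V_i\mid X_1,\ldots,X_n)=\int_{V_i}(dP^n/dP_n^\Pi)\,d\Pi(P)\big/\int_\scrP(dP^n/dP_n^\Pi)\,d\Pi(P)$, rather than bounding it by $1$ as in the proof of that lemma. Splitting on the test event $\{\phi_{i,n}=1\}$ versus its complement, using $P_0^n\ll P_n^\Pi$ to control the denominator exactly as in Lemma~\ref{lem:cons}, and using that on a set of asymptotically full $P_0^n$-measure the denominator is bounded below (the matching/denominator estimate underlying Lemma~\ref{lem:cons}), one arrives at
\[
  P_0^n\Pi(V_i\mid X_1,\ldots,X_n)
  \leq \Pi(V_i)\,\vinf_{\phantom{|}0\leq\al\leq1}\Pi(B_i)^{-\al}
    \Bigl[\sup_{P\in\co{V_i}}\sup_{Q\in B_i}P_0\Bigl(\frac{dP}{dQ}\Bigr)^\al\Bigr]^n.
\]
Since $\Pi(V_i)\leq\Pi(V_i)^\al$ is false in general, the correct bookkeeping is to carry $\Pi(V_i)^\al$ directly: in the splitting, the contribution of $V_i$ through the term $\sup_{P\in V_i}P_0^n(dP^n/dP_n^\Pi)(1-\phi_{i,n})$ already contains the relevant mass, and a Hölder/Jensen step on $\int_{V_i}(dP^n/dP_n^\Pi)d\Pi(P)=\Pi(V_i)\int(dP^n/dP_n^\Pi)d\Pi(P\mid V_i)$ produces the factor $\Pi(V_i)^\al$ after raising to the power $\al$ inside the Hellinger-transform estimate. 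This mirrors exactly how $\Pi(B_i)^{-\al}$ appears in Lemma~\ref{lem:HTBnd}, now on the numerator side.

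Finally, sum over $i$: since $\{V_i\}$ covers $V$, $\Pi(V\mid X_1,\ldots,X_n)\leq\sum_{i\geq1}\Pi(V_i\mid X_1,\ldots,X_n)$ pointwise, so
\[
  P_0^n\Pi(V\mid X_1,\ldots,X_n)
  \leq \sum_{i\geq1}\inf_{0\leq\al\leq1}\frac{\Pi(V_i)^\al}{\Pi(B_i)^\al}
    \Bigl[\sup_{P\in\co{V_i}}\sup_{Q\in B_i}P_0\Bigl(\frac{dP}{dQ}\Bigr)^\al\Bigr]^n,
\]
which is~(\ref{eq:prewalker}). The main obstacle I anticipate is the denominator: with a countable rather than finite cover one can no longer simply intersect finitely many "good denominator" events, so I would either (a) prove the displayed bound on $P_0^n\Pi(V_i\mid\cdot)$ by an argument that controls numerator and denominator \emph{simultaneously} (as the excerpt emphasises is the novelty of Lemma~\ref{lem:cons}), avoiding any event-intersection over $i$, or (b) use the crude but uniform lower bound $\int(dP^n/dP_n^\Pi)d\Pi(P)\geq (dP_n^{\Pi|B_i}/dP_n^\Pi)\cdot\Pi(B_i)^{?}$-type inequality that already appears implicitly in Lemma~\ref{lem:HTBnd}; option (a) is cleaner and is what I would write up. Everything else is Hölder's inequality, the Hellinger-transform bound of Lemma~\ref{lem:ntests}, and monotone convergence for the sum.
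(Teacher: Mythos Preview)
Your overall architecture is right: sum over the countable cover, bound each $P_0^n\Pi(V_i\mid X_1,\ldots,X_n)$ individually via the simultaneous numerator/denominator argument, and collect. But the heart of the matter is the factor $\Pi(V_i)^\al$, and your derivation of it is not executed. Applying Lemma~\ref{lem:HTBnd} first and then trying to ``insert'' $\Pi(V_i)^\al$ afterwards does not work, as you yourself note ($\Pi(V_i)\leq\Pi(V_i)^\al$ fails), and the ``H\"older/Jensen step on $\int_{V_i}(dP^n/dP_n^\Pi)\,d\Pi(P)$'' you sketch does not produce the power $\al$ either: the $\al$ lives inside the integrand $(dP_n/dP_n^\Pi)^\al$, not on the outer $\Pi$-integral. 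The analogy with how $\Pi(B_i)^{-\al}$ appears is also imprecise, since that factor comes from the pointwise inequality $dP_n^\Pi\geq\Pi(B_i)\,dP_n^{\Pi|B_i}$, a mechanism with no ``numerator-side'' counterpart.

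The clean device, and what the paper does, is to put $\Pi(V_i)$ into the minimax problem \emph{before} choosing the test. Starting from
\[
  P_0^n\Pi(V_i\mid X_1,\ldots,X_n)
  \leq P_0^n\phi_{i,n}
    + \Pi(V_i)\,\sup_{P\in V_i}P_0^n\frac{dP^n}{dP_n^\Pi}(1-\phi_{i,n}),
\]
one applies the minimax theorem to the weighted risk $P_0^n\phi+\Pi(V_i)\,P_0^n(dP_n/dP_n^\Pi)(1-\phi)$. The optimal test is then the indicator $\phi=1\{\Pi(V_i)\,dP_n/dP_n^\Pi>1\}$, and the elementary bound $1\{x>1\}+x\,1\{x\leq1\}\leq x^\al$ applied with $x=\Pi(V_i)\,dP_n/dP_n^\Pi$ gives
\[
  P_0^n\Bigl(\Pi(V_i)\,\frac{dP_n}{dP_n^\Pi}\Bigr)^\al
  = \Pi(V_i)^\al\,P_0^n\Bigl(\frac{dP_n}{dP_n^\Pi}\Bigr)^\al,
\]
after which the proof of Lemma~\ref{lem:HTBnd} continues verbatim to produce $\Pi(B_i)^{-\al}$ and the $n$-th power of the Hellinger-transform supremum. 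This is the one idea your proposal is missing.

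Your worry about the denominator under a countable cover is misplaced. The bound on each $P_0^n\Pi(V_i\mid X_1,\ldots,X_n)$ is an \emph{expectation} inequality, not an almost-sure one, so nothing needs to be intersected over $i$; the sum over $i$ is just monotone convergence applied to $\Pi(V\mid X_1,\ldots,X_n)\leq\sum_i\Pi(V_i\mid X_1,\ldots,X_n)$ followed by linearity of $P_0^n$.
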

The following two corollaries show how theorem~\ref{thm:sep} is
related to Walker's theorem~\ref{thm:walker}. The first is based on 
a prior that satisfies a condition of the form~(\ref{eq:walker}),
but does \emph{not} make the assumption that the prior is also
a Kullback-Leibler prior yet. Instead, a model condition with a
role similar to that of (\ref{eq:power}) is imposed. 
\begin{corollary}
\label{cor:varwalker}
Let $\scrP$ and $\Pi$ be given and
assume that $P_0^n\ll P_n^\Pi$ for all $n\geq1$. Let $V$ be a
model subset, with a countable cover $V_1,V_2,\ldots$.
and a $B\subset\scrP$ such that $\Pi(B)>0$ and for all
$i\geq1$, $P\in V_i$, $\sup_{Q\in B} P_0(dP/dQ)<\infty$. Furthermore,
assume that,
\begin{equation}
  \label{eq:tstpwr}
  \sup_{i\geq1} \sup_{P\in\co{V_i}} \sup_{Q\in B}
    P_0\Bigl(\frac{dP}{dQ}\Bigr)^{1/2} < 1.
\end{equation}
If the prior satisfies the summability condition,
\begin{equation}
  \label{eq:walkercond}
  \sum_{\phantom{|}i\geq1} \Pi(V_i)^{1/2} < \infty,
\end{equation}
then the posterior satisfies, $\Pi(V|X_1,\ldots,X_n)\convas{P_0}0$.
\end{corollary}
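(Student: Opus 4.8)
The plan is to read the corollary off from theorem~\ref{thm:sep} by specializing to a constant sequence of localizing sets and then estimating the infimum over $\al$ crudely. First I would check that the hypotheses of theorem~\ref{thm:sep} are met with the choice $B_i=B$ for every $i\geq1$: the cover $V_1,V_2,\ldots$ of $V$ is countable by assumption, $\Pi(B)>0$, and $\sup_{Q\in B}P_0(dP/dQ)<\infty$ for every $P\in V_i$ — precisely the standing hypotheses of the corollary. Theorem~\ref{thm:sep} then gives, for every $n\geq1$,
\[
  P_0^n\Pi(V|X_1,\ldots,X_n)
  \leq \sum_{i\geq1}\inf_{0\leq\al\leq1}\frac{\Pi(V_i)^\al}{\Pi(B)^\al}
    \Bigl[\sup_{P\in\co{V_i}}\sup_{Q\in B}P_0\Bigl(\frac{dP}{dQ}\Bigr)^\al\Bigr]^n .
\]

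Next I would bound the infimum over $\al\in[0,1]$ above by its value at $\al=\thalf$, which is legitimate since it only weakens the estimate. Put $\rho:=\sup_{i\geq1}\sup_{P\in\co{V_i}}\sup_{Q\in B}P_0(dP/dQ)^{1/2}$; condition~(\ref{eq:tstpwr}) is exactly the statement $\rho<1$. Hence the $i$-th summand is at most $\Pi(B)^{-1/2}\Pi(V_i)^{1/2}\rho^n$, and summing over $i$ while invoking the summability condition~(\ref{eq:walkercond}) yields
\[
  P_0^n\Pi(V|X_1,\ldots,X_n)
  \leq \frac{\rho^n}{\sqrt{\Pi(B)}}\sum_{i\geq1}\Pi(V_i)^{1/2}
  = \frac{C\,\rho^n}{\sqrt{\Pi(B)}},
\]
where $C:=\sum_{i\geq1}\Pi(V_i)^{1/2}<\infty$. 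Since $\rho<1$, the right-hand side is summable in $n$.

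Finally I would convert this first-moment bound into the almost-sure statement in the standard way: from $\sum_{n\geq1}P_0^n\Pi(V|X_1,\ldots,X_n)<\infty$ and $\Pi(V|X_1,\ldots,X_n)\geq0$, Tonelli gives $\sum_{n\geq1}\Pi(V|X_1,\ldots,X_n)<\infty$ $\Pas$, so in particular $\Pi(V|X_1,\ldots,X_n)\convas{P_0}0$. I do not expect any genuine obstacle beyond theorem~\ref{thm:sep} itself; the only points worth a remark are that replacing the $\al$-infimum by the fixed value $\al=\thalf$ is harmless, and that it is the \emph{uniform} (in $i$) testing-power bound~(\ref{eq:tstpwr}) that produces a geometric factor $\rho^n$ with $\rho<1$ — this decay in $n$ is what makes the $n$-series converge, so that the prior-mass summability~(\ref{eq:walkercond}) can be combined with the Borel--Cantelli step.
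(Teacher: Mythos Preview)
Your proposal is correct and follows essentially the same route as the paper: specialize theorem~\ref{thm:sep} with $B_i=B$, replace the infimum over $\al$ by $\al=\thalf$, invoke the uniform bound~(\ref{eq:tstpwr}) to extract a geometric factor, and combine with~(\ref{eq:walkercond}). The only cosmetic difference is the passage to almost-sure convergence: the paper alludes to the Markov/Borel--Cantelli step used in the proof of lemma~\ref{lem:cons}, whereas you sum the first-moment bounds directly and apply Tonelli---both are valid and equivalent here.
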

The second corollary does not impose model conditions like
(\ref{eq:tstpwr}), and, instead, requires a Kullback-Leibler prior
that satisfies a summability condition that is slightly stronger than
condition (\ref{eq:walker}) in theorem~\ref{thm:walker}.
\begin{corollary}
\label{cor:onlysum}
Let $\scrP$ be separable in the Hellinger topology. Assume that
there is Kullback-Leibler neighbourhood $B$ of $P_0$ such that for
all $P\in\scrP$, $\sup_{Q\in B} P_0(dP/dQ)<\infty$.
Let $\Pi$ be a Kullback-Leibler prior such that for all $\beta>0$,
\begin{equation}
  \label{eq:walkerprime}
  \sum_{i\geq1} \Pi(V_i)^{\beta} < \infty,
\end{equation}
where the $V_i$, ($i\geq1$) are any cover of $\scrP$ by Hellinger
balls of a fixed radius. Then the posterior is $P_0$-almost-surely
Hellinger consistent.
\end{corollary}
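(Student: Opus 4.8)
Proof of Corollary~\ref{cor:onlysum}\\
The plan is to deduce this from Corollary~\ref{cor:varwalker} by choosing the countable cover of
$V=\{P\in\scrP:H(P,P_0)>\ep\}$ finely enough and by shrinking the Kullback--Leibler neighbourhood
$B$ so that the uniform testing-power bound \eqref{eq:tstpwr} holds. Since $\scrP$ is Hellinger
separable, fix $\ep>0$ and pick $0<\delta<\ep$; there is a countable cover $\{V_i:i\geq1\}$ of $V$
by $H$-balls of radius $\delta$. The first task is to show that, after possibly replacing $B$ by a
smaller Kullback--Leibler ball $B'\subset B$ around $P_0$ (which still has $\Pi(B')>0$ because
$\Pi$ is a Kullback--Leibler prior, and still has $\sup_{Q\in B'}P_0(dP/dQ)<\infty$ since
$B'\subset B$), one has
\[
  \sup_{i\geq1}\ \sup_{P\in\co{V_i}}\ \sup_{Q\in B'}\ P_0\Bigl(\frac{dP}{dQ}\Bigr)^{1/2} < 1 .
\]
For this I would argue as in the proof of Schwartz's theorem via Lemma~\ref{lem:KLequiv}: each
$\co{V_i}$ is a Hellinger ball of radius $\delta$, so it is bounded away from $P_0$ in Hellinger
distance uniformly in $i$ (every $P\in\co{V_i}$ has $H(P,P_0)>\ep-\delta$ once $V_i$ meets $V$, or
we simply discard the finitely-many-irrelevant $V_i$'s that do not), hence bounded away from $P_0$
in Kullback--Leibler divergence is \emph{not} automatic, but the Hellinger transform
$\sup_{Q\in B'}P_0(dP/dQ)^{1/2}$ can be made uniformly $<1$ by combining the elementary bound
$P_0(p/q)^{1/2}\leq 1-\tfrac12 H(P_0,P)^2 + H(P_0,Q)\,(\text{rest term})$-type estimate used just
before Theorem~\ref{thm:hell} (or, more directly, by the Cauchy--Schwarz computation there together
with $\sup_{Q\in B}P_0(dP/dQ)<\infty$) with the observation that shrinking $B'$ drives
$\sup_{Q\in B'}H(P_0,Q)\to0$, so the $P_0$-independent deficiency $\tfrac12\inf_i H(P_0,\co{V_i})^2
\geq\tfrac12(\ep-\delta)^2>0$ eventually dominates. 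This gives \eqref{eq:tstpwr} for this choice of
cover and of $B'$.

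The second task is the summability condition \eqref{eq:walkercond}. Here I would invoke the
hypothesis \eqref{eq:walkerprime} with the \emph{specific} exponent $\beta=\tfrac12$: the corollary
assumes $\sum_{i\geq1}\Pi(V_i)^{\beta}<\infty$ for \emph{all} $\beta>0$, and in particular for
$\beta=\tfrac12$, so $\sum_{i\geq1}\Pi(V_i)^{1/2}<\infty$ holds for the cover just fixed. Strictly,
\eqref{eq:walkerprime} is stated for a generic countable cover, so one should read it as: for the
$H$-ball cover $\{V_i\}$ of the fixed set $V$ the sum is finite for every $\beta>0$; this is exactly
the form in which it will be used. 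With \eqref{eq:tstpwr} and \eqref{eq:walkercond} both in hand,
Corollary~\ref{cor:varwalker} yields $\Pi(V|X_1,\ldots,X_n)\convas{P_0}0$, i.e.
$\Pi(H(P,P_0)>\ep|X_1,\ldots,X_n)\convas{P_0}0$. Since $\ep>0$ was arbitrary, this is precisely
Hellinger consistency of the posterior, $P_0$-almost-surely.

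The main obstacle I anticipate is the first task: making the Hellinger-transform bound
\eqref{eq:tstpwr} \emph{uniform over all $i$} simultaneously with a \emph{single} neighbourhood
$B'$. The radius-$\delta$ balls $V_i$ are uniformly separated from $P_0$, which controls the
leading term $1-\tfrac12 H(P_0,P)^2$ uniformly; the subtlety is the rest term, which in the
computation preceding Theorem~\ref{thm:hell} carries a factor $\|p/q\|_{2,Q}$ that is not obviously
bounded uniformly over $P\in\scrP$ without an assumption like \eqref{eq:elltwoball}. Under the
present hypothesis we only have $\sup_{Q\in B}P_0(dP/dQ)<\infty$ \emph{for each fixed $P$}, not
uniformly; however, one checks that $P_0(dP/dQ)^{1/2}\le (P_0(dP/dQ))^{?}$-type interpolation is not
needed — what is needed is only that, for $Q$ ranging over the shrinking ball $B'$ and $P$ over
$\co{V_i}$, the quantity $P_0(dP/dQ)^{1/2}$ stays below $1-c$ for a fixed $c>0$. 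This follows
because $P_0(dP/dQ)^{1/2}$ is, by dominated convergence as $B'\downarrow\{P_0\}$, close to
$P_0(dP/dP_0)^{1/2}=\int p_0^{1/2}p^{1/2}\,d\mu=1-\tfrac12H(P_0,P)^2\le 1-\tfrac12(\ep-\delta)^2$,
uniformly in $P\in V$ by the uniform separation; the one genuine point to nail down is that this
convergence is uniform over $P$, which one gets from the uniform integrability implied by
$\sup_{Q\in B}P_0(dP/dQ)<\infty$ applied along the sieve, or simply by noting $P_0(dP/dQ)^{1/2}\le
\tfrac12\bigl(P_0(dP_0/dQ)+P_0(dP/dP_0)\bigr)$-style splitting. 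Once that uniformity is secured the
rest is bookkeeping.
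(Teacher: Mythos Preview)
Your reduction to Corollary~\ref{cor:varwalker} does not go through, and the obstacle you yourself flag is real rather than cosmetic. The patches you propose fail: the AM--GM splitting $(p/q)^{1/2}\le\tfrac12(p_0/q+p/p_0)$ gives $P_0(dP/dQ)^{1/2}\le\tfrac12\bigl(P_0(dP_0/dQ)+1\bigr)$, and by Jensen $P_0(dP_0/dQ)=\int p_0^2/q\,d\mu\ge1$ always, so this bound is $\ge1$ and useless. The dominated-convergence route is no better: the hypothesis $\sup_{Q\in B}P_0(dP/dQ)<\infty$ is stated \emph{per $P$}, not uniformly, and even taking $P=P_0$ to get $\sup_{Q\in B}\int p_0^2/q\,d\mu<\infty$ only bounds the relevant remainder, it does not force $\int(p_0/q)(p_0^{1/2}-q^{1/2})^2\,d\mu\to0$ as the KL-ball shrinks. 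Without the $L_2$-condition~\eqref{eq:elltwoball} there is no reason for the Hellinger-transform bound~\eqref{eq:tstpwr} to hold at the specific exponent $\al=\tfrac12$.

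The paper's proof avoids this by \emph{not} fixing $\al=\tfrac12$. It covers $V=\{H(P,P_0)\ge\ep\}$ by Hellinger balls of radius $\tfrac14\ep$ centred in $V$, so that $\inf_i\inf_{P\in\co{V_i}}H(P,P_0)\ge\tfrac34\ep$, hence a uniform lower bound on $-P_0\log(dP/dP_0)$ over all $\co{V_i}$. Choosing $B$ to be a KL-ball of smaller radius gives KL-separation uniformly in $i$, and an inspection of the proof of Lemma~\ref{lem:KLequiv} then yields \emph{some} $\al'\in(0,1)$ with
\[
  \sup_{i\ge1}\sup_{P\in\co{V_i}}\sup_{Q\in B}P_0\Bigl(\frac{dP}{dQ}\Bigr)^{\al'}<1.
\]
Plugging this $\al'$ into the bound of Theorem~\ref{thm:sep} produces
$P_0^n\Pi(V|X_1,\ldots,X_n)\le\Pi(B)^{-\al'}(1-\gamma)^{n\al'}\sum_i\Pi(V_i)^{\al'}$,
and one now needs $\sum_i\Pi(V_i)^{\al'}<\infty$. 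Since $\al'$ is determined by the KL-separation argument and may be arbitrarily small (depending on $\ep$), this is exactly why the hypothesis demands~\eqref{eq:walkerprime} for \emph{all} $\beta>0$. Your argument uses only $\beta=\tfrac12$, which both misreads the role of the hypothesis and signals that the route through Corollary~\ref{cor:varwalker} is not the intended one.
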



\section{Posterior rates of convergence}
\label{sec:rates}

Minimax rates of convergence for (estimators based on) posterior
distributions were considered more or less simultaneously in
\cite{Ghosal00} and \cite{Shen01}, with conditions that display very
close resemblance. Both pose (\ref{eq:GGV}) as the condition on the
prior and both appear to be inspired by contemporary results
regarding Hellinger rates of convergence for sieve MLE's, as well as
on \cite{Barron99}, which concerns posterior consistency based on
controlled bracketing entropy for a sieve, up to subsets of negligible
prior mass, following ideas that were first laid down in
\cite{Barron88a}. (Although formulated for fixed-radius Hellinger balls,
it is remarked already in \cite{Barron99} that their main theorem can
also be formulated for $\ep_n\downarrow0$, with reference to
\cite{Shen01}.) More recently, Walker, Lijoi and Pr\"unster
\cite{Walker07} have added to these considerations with a theorem
for Hellinger rates of posterior concentration in models that are
separable for the Hellinger metric, with a central condition that
calls for summability of square-roots of prior masses of covers
of the model by Hellinger balls, based on analogous consistency
results in \cite{Walker04}.

Note that methods proposed in the preceding sections hold at
finite values of $n\geq1$: the hypothesis $B,V$ as well as the
constant $\al$ can be made $n$-dependent without changing the
basic building blocks. As such, not much needs to be adapted to
preceding results to extend also to rates of posterior convergence.
Below we follow Barron's ideas again and sharpen
theorem~\ref{thm:barron} to accomodate rates of posterior convergence.
For the theorem below, we endow the model
with a metric $d$ and assume that the prior is Borel with respect
to the associated metric topology.
\begin{theorem}
\label{thm:sieve}
Let $X_1, X_2, \ldots$ be $\iid-P_0$ for some $P_0\in\scrP$. Assume
that the prior $\Pi$ is such that $P_0^n\ll P^{\Pi}_n$ for all
$n\geq1$. Let $(\ep_n)$ be a sequence with $\ep_n\downarrow 0$ and
$n\ep_n^2\rightarrow\infty$. Define
$V_n=\{P\in\scrP:d(P,P_0)>\ep_n\}$, a sequence of measurable
submodels $\scrP_n\subset\scrP$ and measurable model subsets $B_n$
such that $\sup_{Q\in B_n}P_0(dP/dQ)<\infty$ for all $P\in V_n$.
Assume that, for sufficiently large $n\geq 1$,
\begin{itemize}
\item[(i)]
  there is an $L>0$ such that $V_n\cap\scrP_n$ has a cover
  $V_{n,1},V_{n,2},\ldots,V_{n,N_n}\subset\scrP_n$
  of order $N_n\leq \exp(\ft12 L n\ep_n^2)$, such that for
  all $1\leq i\leq N_n$,
  \begin{equation}
    \label{eq:pitwo}
    \pi_{P_0}\bigl(\,\co{V_{n,i}},B_n\,\bigr)\leq e^{-L\ep_n^2},
  \end{equation}
\item[(ii)]
  there is a $K>0$ such that 
  $\Pi(\scrP\backslash\scrP_n) \leq e^{-K n \ep_n^2}$
  and $\Pi(B_n)\geq e^{-\frac{K}{2} n \ep_n^2}$, while also,
  \begin{equation}
  \label{eq:sievemodelcond}
    \sup_{P\in\scrP\setminus\scrP_n} \sup_{Q\in B_n}
    P_0\Bigl(\frac{dP}{dQ}\Bigr) < e^{\frac{K}{4}\ep_n^2}.
  \end{equation}
\end{itemize}
Then,
$\Pi(\,P\in\scrP:\,d(P,P_0)>\ep_n\,|\,X_1,\ldots,X_n\,)
\convprob{P_0}0$.
\end{theorem}
This theorem has been formulated generally and this
generality obscures the interpretation of conditions somewhat: the
first condition plays the same role as the entropy condition in the
Ghosal-Ghosh-van~der~Vaart theorem; it enables construction of a
suitable minimax test.
Sufficiency of prior mass around $P_0$ forms part of the second
condition, which also assures that the sieve approximates the model
closely enough, by upper-bounding prior mass outside the sieve.
However, both conditions do not illustrate these points with clarity,
so our first goal is to indicate how theorem~\ref{thm:sieve} relates
to more familiar conditions.

Under a mild integrability condition, condition
(\ref{eq:pitwo}) for the sets $\co{V_{n,i}}$ and $B_n$ follows from
a minimal amount of separation of $\co{V_{n,i}}$ and $B_n$ in
Kullback-Leibler divergence. 
\begin{lemma}
\label{lem:KLseparation}
Consider two model subsets $B,W$ such that $P_0\in B$. Suppose that
for some $a\in(0,1)$, $P_0(dP/dQ)^a$ is finite for all $P\in W$, $Q\in
B$. If, for some $\Delta>0$,
\begin{equation}
  \label{eq:KLsepDelta}
  \sup_{Q\in B}-P_0\log\frac{dQ}{dP_0}
  \leq \inf_{P\in W}-P_0\log\frac{dP}{dP_0}-\Delta,
\end{equation}
then there exists an $\alpha\in(0,1)$ such that,
\[
  \pi_{P_0}(B,W) \leq e^{-\alpha\Delta}.
\]
Conversely, if for some $\Delta>0$,
\[
  \sup_{Q\in B}-P_0\log\frac{dQ}{dP_0}
  > \inf_{P\in W}-P_0\log\frac{dP}{dP_0}-\Delta,
\]
then $\pi_{P_0}(B,W;\al) > e^{-\alpha\Delta}$ for all $\al\in(0,1)$.
\end{lemma}
Lemma~\ref{lem:KLseparation} says that if $B$ and $W$ are separated
in Kullback-Leibler divergence by some small difference $\Delta$,
then the logarithm of the Hellinger transform $\log\pi_{P_0}(B,W)$ is
upper-bounded by a multiple of $-\Delta$. This emphasizes the
fundamental role played by the
Kullback-Leibler divergence and it illustrates the associated
limitations: not all models have integrable likelihood ratios,
and Kullback-Leibler divergences that are infinite make inequality
(\ref{eq:KLsepDelta}) void.

With lemma~\ref{lem:KLseparation} in hand, we can simplify and specify
theorem~\ref{thm:sieve} considerably, to bring us closer to
the Ghosal-Ghosh-van~der~Vaart theorem.
\begin{corollary}
\label{cor:altGGV}
Let $X_1, X_2, \ldots$ be \iid-$P_0$ for some $P_0\in\scrP$. 
Specify that the metric on $\scrP$ is the Hellinger metric $H$;
define $(\ep_n)$ with $\ep_n\downarrow0$ and
$n\ep_n^2\rightarrow\infty$, and take
$V_n=\{P\in\scrP:H(P_0,P)>M\ep_n\}$, for $M>0$, and
$B_n=\{Q\in\scrP:-P_0\log(dQ/dP_0)<\ep_n^2\}$. Assume that
for $n$ large enough and all $P\in V_n$, $\sup\{P_0(dP/dQ):Q\in
B_n\}<\infty$. If, for large enough $n\geq1$,
\begin{itemize}
\item[(i)] there is an $L>0$, such that
  $N(\ep_n,\scrP,H)\leq e^{Ln\ep_n^2}$;
\item[(ii)] there is a $K>0$, such that 
  \begin{equation}
  \label{eq:altGGV}
    \Pi\Bigl(\,P\in\scrP\,:\,-P_0\log\frac{dP}{dP_0}<\ep_n^2\,\Bigr)
    \geq e^{-K n\ep_n^2},
  \end{equation}
\end{itemize}
then $\Pi(\,P\in\scrP:\,H(P,P_0)>M\ep_n\,|\,X_1,\ldots,X_n\,)
\convprob{P_0}0$, for $M$ large enough.
\end{corollary}
Comparison with theorem~\ref{thm:GGV} shows that the requirement on
the prior is now formulated
in terms of Schwartz's KL-neighbourhoods rather than the second-order
neighbourhoods of (\ref{eq:GGV}), at the expense of an
integrability condition. For an analysis of example~\ref{ex:noGGVpriors}
using corollary~\ref{cor:altGGV}, see example~\ref{ex:heavytail}.


\section{Marginal consistency}
\label{sec:boundary}

Semi-parametric statistics presents a
well-developed frequentist theory of finite-dimensional parameter
estimation in infinite-dimensional models, including notions of
optimality for parameters that are smooth functionals of model
distributions. By comparison, Bayesian semi-parametric methods are
still in the early stages of development \cite{Bickel12}.
In this section a method of demonstrating marginal consistency is
formulated, based on the material in preceding sections.

The basic problem is set as follows: let $\Tht$
be an open subset of $\RR^k$ parametrizing the \emph{parameter of
interest} $\tht$ and let $H$ be a measurable (and typically
infinite-dimensional) parameter space for the
\emph{nuisance parameter} $\eta$.
The model is $\scrP=\{P_{\tht,\eta}:\tht\in\Tht,\eta\in H\}$ where
$\Tht\times H\to\scrP:(\tht,\eta)\mapsto P_{\tht,\eta}$ is a Markov kernel
on the sample space $(\scrX,\scrA)$ describing the distributions
of individual points from an infinite \iid\ sample $X_1,X_2,
\ldots\in\scrX$. Given a metric $g:\Tht\times\Tht\rightarrow[0,\infty)$
and a prior measure $\Pi$ on $\Tht\times H$ we say that the posterior
is \emph{marginally consistent} for the parameter of interest,
if for all $\ep>0$,
\begin{equation}
  \label{eq:marg}
  \Pi\bigl(\,P_{\tht,\eta}\in\scrP:g(\tht,\tht_0)>\ep,\eta\in H
    \,\bigm|\,X_1,\ldots,X_n\,\bigr) \convas{P_{\tht_0,\eta_0}}0,
\end{equation}
for all $\tht_0\in\Tht$ and $\eta_0\in H$. Marginal consistency
amounts to consistency with
respect to the pseudo-metric $d:\scrP\times\scrP\rightarrow[0,\infty)$,
$d\bigl(P_{\tht,\eta},P_{\tht',\eta'}\bigr) = g(\tht,\tht')$,
for all $\tht,\tht'\in\Tht$ and $\eta,\eta'\in H$. The following
theorem is a formulation of theorem~\ref{thm:main} specific to marginal
consistency.
\begin{theorem}
\label{thm:marginal}
Let $\scrP=\{P_{\tht,\eta}:\tht\in\Tht,\eta\in H\}$ be a model
for data $X_1, X_2, \ldots$ assumed distributed \iid-$P_0$ for
some $P_0\in\scrP$ in the Hellinger support of $\Pi$. Let
$\ep>0$ be given,
define $V=\{P_{\tht,\eta}\in\scrP:g(\tht,\tht_0)>\ep,\eta\in H\}$ and assume
that $V_1,\ldots,V_N$ form a finite cover of $V$. If there exist
model subsets $B_1,\ldots, B_N$ such that for every $1\leq i\leq N$,
\[
  \pi_{P_0}(\,\co{V_i},\,B_i\,) < 1,
\]
$\Pi(B_i)>0$ and $\sup_{Q\in B_i} P_0(dP/dQ)<\infty$ for all $P\in V_i$,
then the posterior is marginally consistent, $P_0$-almost-surely.
\end{theorem}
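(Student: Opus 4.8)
The plan is to read this statement off directly from Theorem~\ref{thm:main}. First I would dispatch the domination hypothesis: since $P_0$ is assumed to lie in the Hellinger support of $\Pi$, Proposition~\ref{prop:dompriorpred} yields $P_0^n\ll P_n^\Pi$ for every $n\geq1$, which is precisely the standing assumption of Theorem~\ref{thm:main}. (This is the only place the Hellinger-support hypothesis enters; in the form stated it could be weakened to the bare requirement $P_0^n\ll P_n^\Pi$.)

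Next I would check that the remaining hypotheses of Theorem~\ref{thm:main} are in force with $V=\{P_{\tht,\eta}\in\scrP:\|\tht-\tht_0\|>\ep,\,\eta\in H\}$, where $\tht_0$ is the $\Tht$-coordinate of $P_0=P_{\tht_0,\eta_0}$. By assumption the $V_1,\ldots,V_N$ form a finite cover of $V$, so that $V\subset\bigcup_{i=1}^N V_i$, and the $B_1,\ldots,B_N$ satisfy $\pi_{P_0}(\co{V_i},B_i)<1$, $\Pi(B_i)>0$, and $\sup_{Q\in B_i}P_0(dP/dQ)<\infty$ for every $P\in V_i$, which is exactly condition (\ref{eq:power}) together with the accompanying requirements. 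Theorem~\ref{thm:main} then delivers $\Pi(V\mid X_1,\ldots,X_n)\to0$, $P_0$-almost-surely.

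Finally I would unwind the definition of $V$: the set $\{P_{\tht,\eta}\in\scrP:\|\tht-\tht_0\|>\ep,\,\eta\in H\}$ is identical to the event appearing in the definition~(\ref{eq:marg}) of marginal consistency for the metric $g(\tht,\tht')=\|\tht-\tht'\|$. Since $\ep>0$ is arbitrary, the limit just obtained holds for every $\ep>0$, which is exactly marginal consistency of the posterior at $P_0$; identifiability of $\tht$ plays no role in this argument, only in the interpretation of (\ref{eq:marg}) as $d$-consistency with the pseudo-metric $d(P_{\tht,\eta},P_{\tht',\eta'})=g(\tht,\tht')$.

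Since essentially everything is inherited from Theorem~\ref{thm:main}, there is no substantive obstacle here; the one point deserving a line of care is measurability, namely that the $\tht$-slice $V$ belongs to the model $\sigma$-algebra $\scrB$ so that $\Pi(V\mid\,\cdot\,)$ is well defined and Theorem~\ref{thm:main} is genuinely applicable to it. This is guaranteed by the assumed measurable structure on $\scrP$: $(\tht,\eta)\mapsto P_{\tht,\eta}$ is a Markov kernel and $\scrB$ is taken rich enough to resolve the $\Tht$-coordinate, so that $\tht$-slices of the above form are measurable subsets of $\scrP$.
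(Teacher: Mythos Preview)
Your proposal is correct and matches the paper's approach: the paper presents Theorem~\ref{thm:marginal} explicitly as ``a formulation of theorem~\ref{thm:main} specific to marginal consistency'' and does not give a separate proof, so your derivation---invoking Proposition~\ref{prop:dompriorpred} for the domination condition and then applying Theorem~\ref{thm:main} verbatim---is exactly what is intended.
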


\subsection{Density support boundaries}

Consistent support boundary estimation (see \cite{Ibragimov81},
or \cite{Reiss17} for a more recent, Bayesian reference),
though easy from the perspective of
point-estimation, is not a triviality when using Bayesian
methods because one is required to specify a nuisance space
\cite{Ritov14}. The Bernstein-Von~Mises phenomenon for this type
of problem is studied in Kleijn and Knapik \cite{Kleijn15} and
leads to exponential rather than normal limiting form for
the posterior. Below, we prove consistency using
theorem~\ref{thm:main}. 

Consider the following simple model: for some constant $\sigma>0$
define the parameter of interest to lie in the space
$\Tht=\{\tht=(\tht_1,\tht_2)\in\RR^2:0<\tht_2-\tht_1<\sigma\}$ equipped with
the Euclidean norm $\|\cdot\|$. Let $H$ be a collection of Lebesgue
probability densities {$\eta:[0,1]\rightarrow[0,\infty)$} for which
there are a constant $a>0$ and a continuous, monotone increasing
$f:(0,a)\rightarrow(0,\infty)$ with $f(0+)=0$, such that,
\begin{equation}
  \label{eq:lower}
  \inf_{\eta\in H}
  \min\Bigl\{\int_0^\ep\eta\,d\mu, \int_{1-\ep}^1\eta\,d\mu\Bigr\}
  \geq f(\ep),\quad(0<\ep<a).
\end{equation}
The model $\scrP = \{ P_{\tht,\eta}:\tht\in\Tht,\eta\in H\}$ is defined
in terms of Lebesgue densities of the following semi-parametric
form,
\[
  p_{\tht,\eta}(x) =\frac{1}{\tht_2-\tht_1}\,
    \eta\Bigl(\frac{x-\tht_1}{\tht_2-\tht_1}\Bigr)\,
     1_{\{\tht_1\leq x\leq\tht_2\}},
\]
for some $(\tht_1,\tht_2)\in\Tht$ and $\eta\in H$. A condition
like (\ref{eq:lower}) is necessarily part of the analysis,
because questions concerning support boundary points make sense
\emph{only} if the distributions under consideration put mass in
every neighbourhood of $\tht_1$ and $\tht_2$.(Let
$\|\cdot\|_{s,Q}$ denote the $L_s(Q)$-norm, for $s\geq1$.)
\begin{theorem}
\label{thm:domain}
For some $\sigma>0$, let ${\Tht}$ be $\{(\tht_1,\tht_2)\in\RR^2:
0<\tht_2-\tht_1<\sigma\}$ and let the space $H$ with associated
function $f$ as in (\ref{eq:lower}) be given. Assume that there
exists an $s\geq1$ such that the sets $B$,
\[
  B=\Bigl\{Q\in\scrP:\Bigl\|\frac{dP_0}{dQ}-1\Bigr\|_{s,Q}<\delta
    \Bigr\},
\]
satisfy $\Pi(B)>0$ for all $\delta>0$. Also assume there exists a
constant $K>0$ such that for all $P\in\scrP$ and $Q\in B$,
$\|dP/dQ\|_{r,Q}\leq K$, where $1/r+1/s=1$. If $X_1,X_2,\ldots$
form an \iid-$P_0$ sample for $P_0=P_{\tht_0,\eta_0}\in\scrP$
then,
\begin{equation}
  \label{eq:consdomain}
  \Pi\bigl(\,\tht\in\Tht\,:\,\|\tht-\tht_0\|<\ep\bigm|X_1,\ldots,X_n\,\bigr)
  \convas{P_0}1,
\end{equation}
for all $\ep>0$.
\end{theorem}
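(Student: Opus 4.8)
The plan is to apply Theorem~\ref{thm:marginal} (equivalently Theorem~\ref{thm:main}) with $g$ the Euclidean metric on $\Tht$, so that the job reduces to three tasks: (i) checking that $P_0$ lies in the Hellinger support of $\Pi$ — hence $P_0^n\ll P_n^\Pi$ by Proposition~\ref{prop:dompriorpred}; (ii) producing a \emph{finite} cover $V_1,\ldots,V_N$ of $V=\{P_{\tht,\eta}:\|\tht-\tht_0\|>\ep,\ \eta\in H\}$; and (iii) exhibiting $B_i$ (here a single common $B$, the one in the statement) with $\Pi(B)>0$, with $\sup_{Q\in B}P_0(dP/dQ)<\infty$ for all $P\in V_i$, and with $\pi_{P_0}(\co{V_i},B)<1$. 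For (i), note that the hypothesis $\|dP/dQ\|_{r,Q}\le K$ together with $\|dP_0/dQ-1\|_{s,Q}<\delta$ and H\"older's inequality gives, in the manner of the display preceding Theorem~\ref{thm:hell}, an upper bound on $-P_0\log(dQ/dP_0)$ (or directly on $H(P_0,Q)$) that tends to $0$ as $\delta\downarrow0$; since $\Pi(B)>0$ for every $\delta>0$, every Hellinger ball around $P_0$ has positive mass, so $P_0$ is in the Hellinger support.

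For (ii) I would use the parametrization explicitly. Write $W=\{\tht\in\Tht:\|\tht-\tht_0\|>\ep\}$; this is a subset of the bounded set $\Tht$ (since $0<\tht_2-\tht_1<\sigma$ forces $\tht$ to lie in a strip, but the strip is not bounded, so one must be slightly careful — cover $\Tht$ itself by a countable family of small Euclidean balls and note that only finitely many meet a fixed ball $\{\|\tht-\tht_0\|\le R\}$, then dispatch the far-away part of $W$ separately using that $dP_0/dQ$ has bounded $s$-norm so $\pi_{P_0}$ is small there too; alternatively observe that for the relevant consistency statement one only needs to test $V$ against $P_0$ and the part of $\Tht$ with $\|\tht\|$ large has, for $Q$ near $P_0$, supports nearly disjoint from that of $P_0$, forcing $P_0(dP/dQ)^{\al}$ small). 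Having covered the parameter set by finitely many Euclidean balls $U_1,\ldots,U_N$ of small radius $\rho$ avoiding $\tht_0$, set $V_i=\{P_{\tht,\eta}:\tht\in U_i,\ \eta\in H\}$. The key structural point is that $\co{V_i}$ consists of mixtures of densities all supported on intervals $[\tht_1,\tht_2]$ with $(\tht_1,\tht_2)\in U_i$; since $U_i$ is a small ball disjoint from $\tht_0$, either the left endpoint or the right endpoint of every such interval is bounded away from the corresponding endpoint of $P_0$'s support $[\tht_{0,1},\tht_{0,2}]$ by a fixed amount $c>0$.

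For (iii), the crux is the bound $\pi_{P_0}(\co{V_i},B)<1$. Fix $P\in\co{V_i}$ and $Q\in B$. By the two-notes-on-supports convention the relevant mass is carried where $p_0>0$, i.e.\ on (a subset of) $[\tht_{0,1},\tht_{0,2}]$. Because of the endpoint separation just described, there is an interval $I$ of length at least $c$ adjacent to one endpoint of $[\tht_{0,1},\tht_{0,2}]$ on which $p=0$ while, by the lower bound (\ref{eq:lower}), $\int_I p_0\,d\mu\ge f'(c')>0$ for a suitable $c'>0$ (rescaling $\eta$'s boundary mass from $[0,\ep]$ to a genuine sub-interval of $[\tht_{0,1},\tht_{0,2}]$). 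Hence $P_0(\,p>0\,)\le 1-f(c')<1$ uniformly over $P\in\co{V_i}$ and, a fortiori, for every $\al\in(0,1]$,
\[
 P_0\Bigl(\frac{dP}{dQ}\Bigr)^{\al}
 = P_0\Bigl[\Bigl(\frac{p}{q}\Bigr)^{\al}1_{\{p>0\}}\Bigr]
 \le P_0\Bigl(\frac{p}{q}\Bigr)\,^{?}
\]
— more carefully, apply H\"older with exponents $1/\al$ and its conjugate to split off the indicator, giving $P_0(p/q)^\al\le \bigl(P_0(p/q)\bigr)^{\al}\,P_0(p>0)^{1-\al}\le K'^{\al}\,(1-f(c'))^{1-\al}$, where $K'=\sup_{P\in\scrP,Q\in B}P_0(dP/dQ)<\infty$ follows from H\"older applied to $\|dP_0/dQ\|$, $\|dP/dQ\|_{r,Q}\le K$ and boundedness of $\|dP_0/dQ-1\|_{s,Q}$. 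The right-hand side is strictly below $1$ for $\al$ close enough to $1$, so $\pi_{P_0}(\co{V_i},B)<1$; and the same H\"older estimate gives $\sup_{Q\in B}P_0(dP/dQ)\le K'<\infty$, discharging the finiteness hypothesis of Theorem~\ref{thm:marginal}. Then (\ref{eq:consdomain}) is exactly the conclusion (\ref{eq:marg}) of Theorem~\ref{thm:marginal}.

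The main obstacle is the finiteness of the cover: $\Tht$ is an open strip, not relatively compact, so the naive Euclidean cover of $W$ is countable rather than finite. The cleanest fix is to argue that for $\|\tht\|$ large (say $|\tht_1|+|\tht_2|>R$ for suitable $R$ depending on $\tht_0$ and $\sigma$) the support $[\tht_1,\tht_2]$ is disjoint from $[\tht_{0,1},\tht_{0,2}]$, whence $p\cdot p_0\equiv 0$ and $P_0(dP/dQ)^\al=0$, so a \emph{single} extra set $V_0=\{P_{\tht,\eta}:\|\tht\|>R\}$ has $\pi_{P_0}(\co{V_0},B)=0<1$ trivially (convexity preserves the support being outside $[\tht_{0,1},\tht_{0,2}]$ only if $R$ is taken large enough that even mixtures stay clear, which holds since all constituent intervals have length $<\sigma$ and are far from the origin on the same side). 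The remaining part $\{\ep<\|\tht-\tht_0\|,\ \|\tht\|\le R\}$ is relatively compact and admits a finite sub-cover. A secondary technical point is the rescaling in (\ref{eq:lower}): the lower bound is stated for $\eta$ on $[0,1]$ near its own endpoints, and one must transfer it through the affine change of variables $x\mapsto(x-\tht_{0,1})/(\tht_{0,2}-\tht_{0,1})$ to get a genuine positive lower bound for $\int_I p_0$; this is routine but is where the hypothesis on $f$ (continuity, monotonicity, $f(0+)=0$, and implicitly $f(\ep)>0$ for $\ep>0$) gets used.
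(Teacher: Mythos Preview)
Your argument has a genuine gap in step~(iii), and the paper's proof avoids your cover-finiteness struggle altogether by a much simpler decomposition.

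The gap is in the inference ``either the left or right endpoint of every interval in $U_i$ is bounded away from the corresponding endpoint of $[\tht_{0,1},\tht_{0,2}]$, therefore there is an interval $I\subset[\tht_{0,1},\tht_{0,2}]$ on which $p=0$.'' Bounded away in \emph{which direction}? If $\tht_1<\tht_{0,1}-c$ and $\tht_2>\tht_{0,2}+c$ (the alternative's support \emph{strictly contains} $P_0$'s support --- perfectly possible, since both widths are only constrained to lie in $(0,\sigma)$), then $p$ need not vanish anywhere on $[\tht_{0,1},\tht_{0,2}]$, and your bound $P_0(p>0)\le 1-f(c')$ fails. Your H\"older split $P_0(p/q)^\al\le(P_0(p/q))^\al P_0(p>0)^{1-\al}$ then gives nothing (and incidentally this bound is $<1$ for $\al$ near $0$, not near $1$). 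What is true in that case is the \emph{dual} statement $P(p_0>0)\le 1-f(c')$, which controls the $\al\to1-$ limit of the Hellinger transform, not the $\al\to0+$ limit you use.

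The paper exploits exactly this duality and thereby sidesteps your finiteness problem entirely. Working with the equivalent max-metric $g(\tht,\tht')=\max\{|\tht_1-\tht_1'|,|\tht_2-\tht_2'|\}$, the set $\{g(\tht,\tht_0)>\ep\}$ is covered by just \emph{four} (unbounded) half-spaces $V_{\pm,j}=\{\pm(\tht_j-\tht_{0,j})\ge\ep\}$, $j=1,2$. On $V_{+,1}$ and $V_{-,2}$ one has $P_0(p>0)\le 1-f(\ep/\sigma)$ via the rescaled lower bound~(\ref{eq:lower}), so the $\al=0+$ limit from Lemma~\ref{lem:ht} gives the needed $\pi_{P_0}<1$. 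On $V_{-,1}$ and $V_{+,2}$ it is $P(p_0>0)\le 1-f(\ep/\sigma)$ (checked directly for convex combinations), and Lemma~\ref{lem:ht} at $\al=1-$ gives $P_0(dP/dQ)^{1-}=\int(dP_0/dQ)1_{\{p_0,p,q>0\}}\,dP\le P(p_0>0)+\|dP_0/dQ-1\|_{s,Q}\|dP/dQ\|_{r,Q}$, which is where the H\"older hypotheses of the theorem enter, yielding $\pi_{P_0}\le 1-\tfrac12 f(\ep/\sigma)$ for $\delta$ small enough. No compactness, no small balls, no separate treatment of far-away $\tht$ --- the four half-spaces are already the finite cover Theorem~\ref{thm:marginal} needs.
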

\begin{example}
\label{ex:domain}
To apply theorem~\ref{thm:domain}, let $P_0=P_{\tht_0,\eta_0}$ 
be a distribution on $\RR$ with Lebesgue density
$p_0:\RR\mapsto[0,\infty)$ supported on an interval
$[\tht_{0,1},\tht_{0,2}]$ of a width smaller than
or equal to a (known) constant $\sigma>0$. Furthermore, let
$g:[0,1]\rightarrow[0,\infty)$ be a known Lebesgue probability density
such that $g(x)>0$ for all $x\in(0,1)$. For some constant
$M>0$ consider the subset $C_M$
of $C[0,1]$ of all continuous $h:[0,1]\rightarrow[0,\infty)$
such that $e^{-M}\leq h\leq e^M$. To define the model's
dependence on the nuisance parameter $h$, let $H$ contain all
$\eta:[0,1]\rightarrow[0,\infty)$ that are Esscher transforms
\cite{Leonard78} of the form,
\[
  \eta(x) = \frac{g(x)\,h(x)}{\int_0^1g(y)\,h(y)\,dy},
\]
for some $h\in C_M$ and all $x\in[0,1]$. To define a prior on $H$,
let $U\sim U[-M,M]$ be uniformly distributed
on $[-M,M]$ and let $W=\{W(x):x\in[0,1]\}$ be Brownian motion on
$[0,1]$, independent of $U$. Note that it is possible to
condition the process $Z(x)=U+W(x)$ on $-M\leq Z(x)\leq M$
for all $x\in[0,1]$ (or reflect $Z$ in $z=-M$ and $z=M$).
Define the distribution of $\eta$ under the prior $\Pi_H$ by
taking $h=e^Z$. On $\Tht$ let $\Pi_{\Tht}$ denote a prior with
a Lebesgue density that is continuous and strictly positive
on $\Tht$. One verifies easily that the model satisfies
(\ref{eq:lower}) with $f$ defined by,
\[
  f(\ep) = e^{-2M}\,\min\Bigl\{ \int_0^\ep g(x)\,dx,
    \int_{1-\ep}^1g(x)\,dx\Bigr\},
\]
for all $\ep>0$ small enough. The prior mass requirement is
satisfied because the distribution of the process
$Z$ has full support relative to the uniform norm in the
collection of all continuous functions on $[0,1]$ bounded by $M$.
\end{example}
\begin{remark}
\label{rem:barron}
If the assumed bound $\sigma>0$ is set to infinity, testing power is
lost (see the proof of theorem~\ref{thm:domain}, or note that if one
pictures distributions $P$ of wider and wider support, the minimal
mass bound~(\ref{eq:lower}) implies less and less mass
remains to lower-bound $P(p_0=0)$ and $P_0(p=0)$). To see that the
bound is of a technical rather than essential nature, note that if a
model of bounded-support distributions satisfies (\ref{eq:lower}) and
is uniformly tight, such a constant $\sigma>0$ exists. Consequently,
a sequence of models with growing $\sigma$'s can be used: for given
$P_0=P_{\tht_0,\eta_0}$, there is a lower bound $\bar{\sigma}>0$ such
that the model of theorem~\ref{thm:domain} is well-specified for
all $\sigma>\bar{\sigma}$. So if $\sigma_m\rightarrow\infty$, the
corresponding models $\scrP_m$ are well-specified for large enough
$m$ and the posteriors on those $\scrP_m$ are consistent, \cf\
theorem~\ref{thm:domain}. By diagonalization there exists a sequence
$(\sigma_{m(n)})_{n\geq1}$ that traverses $(\sigma_m)$ slowly enough
in order to guarantee that consistency obtains while we increase
$m(n)$ with the sample size $n$.

To know exactly how slowly we should let $\sigma$ go to infinity,
we use theorem~\ref{thm:barron}: let $\sigma_n$ increase with $n$ and define
$\scrP_n=\{P_{\tht,\eta}\in\scrP:|\tht_1-\tht_2|<\sigma_n,\eta\in H\}$.
Since $N_n=4$ for all $n\geq1$ (namely the sets $V_{+,1}$, $V_{-,1}$,
$V_{+,2}$ and $V_{-,2}$ in the proof of theorem~\ref{thm:domain})
any constant $L>0$ will do, as long as,
\[
  n\,f(\ep/\sigma_n)\rightarrow\infty.
\]
A glance at inequality (\ref{eq:bndPnullPoverQ}) suggests that
condition~(\ref{eq:barron}) applies, if we choose $\Pi$ such that,
\[
  \Pi\bigl( P_{\tht,\eta}\in\scrP : |\tht_1-\tht_2|\geq\sigma_n,\eta\in H \bigr)
  \leq e^{-n\,K},
\]
for some $K>0$. For example, if the family $H$ consists of densities
that display jumps at both $\tht_1$ and $\tht_2$ of some minimal
size $\delta>0$, then $f(x)\geq \ft12\delta\,x$ for values of $x>0$
that are close enough to $x=0$. Consequently, for a model in which
support boundaries represent discontinuous jumps, marginal posterior
consistency obtains if we let $\sigma_n=o(n)$. If $H$
consists of densities that are continuous ($k=0$) or $k\geq1$
times continuously differentiable at the
boundary points, then $f(x)$ is lower-bounded by a multiple of
$x^{k+2}$, which implies that $\sigma_n$ must be of order $o(n^{1/k+2})$.
\end{remark}


\section{Some examples, conclusions and discussion}
\label{sec:disc}

Schwartz's theorem is absolutely central to the frequentist
perspective on Bayesian non-parametric statistics and it has
been in place for more than fifty years: it is beautiful
and powerful, in that it applies to a very wide class of models.
However, its generality with respect to the model implies that
it is rather stringent with respect to the prior. Since choices
for non-parametric priors are usually not abundant, overly
stringent criteria form a problem.

In this paper, an attempt has been made to demonstrate that there is
more flexibility in the criteria for the prior, if one is willing to
accept more strict model conditions. The proposed method applies to
well- and mis-specified models in the form stated, implies
Schwartz's theorem and gives rise to a consistency theorem in
Kullback-Leibler divergence, as well as a metric consistency
theorem for priors that charge metric balls, \eg\ applicable to
the Hellinger metric. Generalizations to sieved models with
Barron's prior mass negligibility condition, to separable
models along the lines of Walker's theorem and to rates of posterior
convergence also fall within the range of the methods proposed.
What remains is to demonstrate practical value based on
applications, to add to examples already given.

\subsection{Some examples}

Because Hellinger consistent density estimation using mixtures is
a well-studied subject, especially with Dirichlet priors, we discuss
that example below in quite some generality, to illustrate
practicality of the proposed methods.
\begin{example}
Consider a model $\scrP$ for observation of one of two real-valued,
dependent random variables $X,Z$, assuming that \emph{if} we would
observe $Z$, the distribution for $X$ would be known: $X|Z=z$
is assumed to have a Lebesgue density $p(\cdot|z):\RR\rightarrow\RR$
such that $z\mapsto p(x|z)$ is bounded and continuous for every $x$.
We observe only an \iid\ sample $X_1,X_2,\ldots$ from $P_0\in\scrP$
and the corresponding $Z_1,Z_2,\ldots$ remain hidden. The model
$\scrP$ then consists of distributions $P_F$ for $X$ with Lebesgue
densities of the form,
\[
  p_{F}(x)=\int_{\RR}p(x|z)\,dF(z)
\]
where the parameter $F$ represents the unknown distribution
of $Z$. For reasons explained below,
assume that $Z\in[0,1]$, so that the space $\scrD$ of all
distributions on $[0,1]$ is compact in the weak topology by
Prokhorov's theorem. Note that for any fixed $x\in\RR$, $F\mapsto p_F(x)$
is weakly continuous. By Scheff\'e's lemma this pointwise continuity
implies weak-to-total-variational continuity of the map
$F\mapsto P_F$, which is equivalent to weak-to-Hellinger
continuity. Since $\scrD$ is weakly compact, this implies that
the model $\scrP$ is Hellinger compact. Note that it is not
necessary for the kernel $z\mapsto p(x|z)$ to be continuous:
for example, models represented by scale-mixtures of uniform
kernels (to represent families of monotone densities), or
exponential densities (to represent densities of completely
monotone distribution functions) also give rise to
weak-to-Hellinger continuous maps $F\mapsto p_F$.

Next we make the additional assumption that the
$L_2$-condition (\ref{eq:elltwoball}) is satisfied;
for example in the well-known normal location mixture model,
where $X|Z=z$ is distributed normally with mean $z$ \cite{Ghosal99},
the family $\scrP=\{p_F:F\in \scrD\}$ is contained in an
envelope that allows straightforward verification
of (\ref{eq:elltwoball}) (for details, see the proof of theorem~3.2
in \cite{Kleijn06}). For mixtures arising from other kernels (like the
scale mixtures of uniform and exponential kernels mentioned already)
condition (\ref{eq:elltwoball}) has to be verified separately and
does not appear to be overly stringent.

With totally boundedness and (\ref{eq:elltwoball}) established,
note that any prior $\Pi$ on $\scrD$ that is Borel for the weak topology
induces a prior that is Borel for the Hellinger topology on the model
$\scrP$. If the weak support of $\Pi$ equals $\scrD$ then the induced
Hellinger support includes $\scrP$. For instance, a Dirichlet prior
for $F$ with base measure of full support on $[0,1]$ will suffice to
conclude from theorem~\ref{thm:hell} that the posterior is Hellinger
consistent,
\[
  \Pi\bigl(\,P\in\scrP\,:\,H(P_F,P_0)>\ep\,\bigm|\,X_1,\ldots,X_n\,\bigr)
  \convas{P_0}0.
\]
for every $\ep>0$. Other priors on $\scrD$, like Gibbs-type measures
of full weak support \cite{DeBlasi13} would also suffice, a result that
is perhaps not so well known. In fact, consistency applies for any
bounded, continuous (and some semi-continuous) kernel(s)
$x\mapsto p(x|z)$ such that mixture densities satisfy (\ref{eq:elltwoball}).
\end{example}
To demonstrate that the approach advocated in this
paper applies where Schwartz's theorem fails, we look at the domain
boundary problem of example~\ref{ex:noKLpriors}.
\begin{example}
\label{ex:fixedwidthdomain}
Assume that the width of the support of $p_0$ is
equal to one. The model consists of densities $\eta$ supported on
$[0,1]$ shifted over $\tht$ in (some subset $D$ of) $\RR$,
\[
  p_{\tht,\eta}(x) = \eta(x-\tht)\,1_{ [\tht,\tht+1] }(x).
\]
Consider $H$ with some prior $\Pi_H$ and a prior $\Pi_{\Tht}$ on
$\Tht=\RR$ with a Lebesgue density that is continuous and strictly
positive on all of $\RR$. 
Note that if $\tht\neq\tht'$ the Kullback-Leibler divergence
of $P_{\tht,\eta}$ with respect to $P_{\tht',\eta'}$ is
infinite, for all $\eta,\eta'\in H$. Hence, for given
$P_0=P_{\tht_0,\eta_0}\in\scrP$, Kullback-Leibler neighbourhoods
do not have any extent in the $\tht$-direction:
\[
  \Bigl\{P_{\tht,\eta}\in\scrP:-P_0\log\frac{dP_{\tht,\eta}}{dP_0}<\ep\Bigr\}
  \subset
  \bigl\{P_{\tht_0,\eta}\in\scrP:\eta\in H\bigr\}.
\]
In order for Schwartz's theorem to apply, a prior satisfying
(\ref{eq:KLprior}) is required: in this example, that requirement
implies that,
\[
  \Pi\bigl(P_{\tht,\eta}:\eta\in H\bigr)>0,
\]
for all $\tht$, which is not possible (unless $D$ is countable).

The construction of example~\ref{ex:domain} remains applicable,
however. In fact, in the present, fixed-width simplification the
situation is more transparent: if we write $P_0=P_{\tht_0,\eta_0}$ and
$V=V_+\cup V_-$ with $V_+=\{P_{\tht,\eta}:\tht>\tht_0+\ep,\eta\in H\}$
and $V_-=\{P_{\tht,\eta}:\tht<\tht_0-\ep,\eta\in H\}$ for some $\ep>0$,
then we choose $B_+=\{P_{\tht,\eta}:\tht_0+\ft12\ep<\tht<\tht_0+\ep,
\eta\in H\}$ and $B_-=\{P_{\tht,\eta}:\tht_0-\ep<\tht<\tht_0-\ft12\ep,
\eta\in H\}$, so that $\Pi(B_{\pm})>0$. Consider only $\al=0$ and
notice that the mismatch in extent of supports implies that,
\[
  P_0(p>0)\leq 1-f(\ep)<1,
\]
for all $P\in\co{V_{\pm}}$, based on (\ref{eq:lower}). If $H$ is
chosen such that for all
$P\in V_{\pm}$, $\sup_{Q\in B_{\pm}}P_0(p/q)<\infty$, then
(\ref{eq:consdomain}) follows (even regardless of the prior on $H$).
Larger spaces $H$ can be considered if the sets $B_{\pm}$ are
restricted appropriately while maintaining $\Pi(B_{\pm})>0$.
Conclude that for the estimation of an unknown $\tht_0\in\RR$,
Schwartz's theorem does not apply, while example~\ref{ex:domain}
remains in effect. 
\end{example}
To show that our proposed approach continues to apply in cases where
the Ghosal-Ghosh-van~der~Vaart theorem does not, we look at the parametric
heavy-tails problem of example~\ref{ex:noGGVpriors}.
\begin{example}
\label{ex:heavytail}
Consider example~\ref{ex:noGGVpriors}: the
sample $X_1, X_2,\ldots$ consists of integers drawn independently
from a distribution $P_a$, $(a\geq1)$, defined by,
\[
  p_a(k) = P_a(X=k) = \frac{1}{Z_a}\frac{1}{k^a(\log k)^3}
\]
for all $k\geq2$, where $Z_a$ is the normalization constant. The
parameter $a$ is smooth and the Fisher information is non-singular,
so $a$ can be estimated at parametric rate, but as noted, there
exists no prior for the parameter $a$ such that condition
(\ref{eq:GGV}) can be satisfied for all $P_0$ in the model.

Corollary~\ref{cor:altGGV} remains valid, however, and demonstrates
that the posterior converges at $\sqrt{n}$-rate. Because
corollary~\ref{cor:altGGV} is formulated for totally-bounded parameter
spaces only, without a negligiblility condition like
(\ref{eq:sievemodelcond}), we restrict the parameter $a$ to a
bounded interval $I$, \ie\ the model is $\scrP=\{P_a:a\in I=[1,L]\}$,
for some $L>1$. (But the result below is expected to hold also without
this restriction.)

For any rate $\ep_n$ that is slower than $n^{-1/2}$, write
$\ep_n=n^{-1/2}M_n$, with $M_n\rightarrow\infty$ and note that we
only have to consider $M_n$ that diverge very slowly, \ie\ $\ep_n$
that are arbitrarily close to the parametric rate. Also note
that there exist constants $M_1,M_2>0$ such that,
\begin{equation}
  \label{eq:bndKL}
  M_1^2(b-a)^2 \leq -P_a\log(p_b/p_a) \leq M_2^2(b-a)^2,
\end{equation}
(because scores have expectation zero and the Fisher information
is non-singular). Define
$V_n=\{P:H(P,P_0)\geq M\ep_n\}$ for some $M>0$. We cover $V_n$
with Hellinger balls
$V_{n,i}$ ($1\leq i\leq N_n$) of radius $\ft12M\ep_n$. Note that
$H(P_b,P_c)\leq M_2|c-b|$ for all $b,c\in I$, so
$N_n=N(\ft12M\ep_n,\scrP,H)\leq N((M/2M_2)\ep_n,I,|\cdot|)\leq
2M_2|I|/(M\ep_n)$. 

Defining also $B_n=\{Q:-P_0\log(dQ/dP_0)<\ep_n^2\}$, we note
that $B_n\subset\{P_b:|b-a|<\ep_n/M_1\}$. Hence, for any
$a\geq1$, any $P_c\in V_n$ and any $P_b$ with $|b-a|<\ep_n/M_2$,
we have,
\[
  P_a\Bigl(\frac{p_c}{p_b}\Bigr)
    = \frac{Z_b}{Z_c}\sum_{k\geq2}\frac{1}{Z_a}\frac{1}{k^a(\log k)^3}
      \frac{k^b}{k^c}
    \leq \frac{Z_b}{Z_c}
\]
because $b<c$ if $M$ is chosen large enough. Since $I$ is compact
and $I\rightarrow\RR:b\mapsto Z_b$ is continuous, $b\mapsto Z_b$ is
bounded, so that for every $P_c\in V_{n,i}$, the integrability condition
$\sup\{P_a(dP_c/dQ): Q\in B_n\}<\infty$ holds. Due to the second
inequality of (\ref{eq:bndKL}), any Borel prior on $I$ of
full support is a KL prior. More specifically, if we choose the
uniform prior on $I$, $\Pi(B_n)\geq \Pi(b\in I:|b-a|\leq \ep_n/M_2)
\geq (|I|M_2)^{-1}\ep_n$. Conclude that theorem~\ref{thm:GGV} does
not apply, but the conditions of corollary~\ref{cor:altGGV} are met
for any rate above $n^{-1/2}$, so the posterior for $a$ converges
at parametric rate.
\end{example}

\subsection{Conclusions and discussion}

It appears safe to conclude that the approach proposed is
versatile where Schwartz's theorem and the Ghosal-Ghosh-van~der~Vaart
theorem are not applicable: the extra flexibility allows that
we `tailor' the prior to the problem using model properties,
rather than being forced to deal with Kullback-Leibler
neighbourhoods or second-order sub-neighbourhoods.

Technically, our proposal is based on the same quantities that
play a central role in \cite{Wong95} (which makes extensive use
of Hellinger transforms to control sieve approximations) and
\cite{Kleijn06}, which applies the minimax theorem in various
ways to prove existence of tests. (In that sense, this work is
more specific than \cite{Kleijn18} which is not limited to
\iid\ data and does not apply the minimax theorem.) 
Regarding the connection with \cite{Kleijn06}, note that there
is a form of mis-specification
\cite{Kleijn04,Kleijn06,Kleijn12b} that applies: $P_0^n$ is
\emph{not equal} to (localised versions of) the prior
predictive distribution, so
frequentist use of Bayesian methodology implies a marginal
distribution for the data that does \emph{not} coincide with
$P_0$. It appears that the asymptotic manifestation of this mismatch
depends on the local prior predictive distributions $P_n^{\Pi|B}$:
if those match $P_0^n$ closely enough (see lemma~\ref{lem:HTBnd}),
testability is maintained and consistency obtains (but see
\cite{Kleijn18} for much more). For analogies
at the conceptual level, compare with Walker's notion of a
\emph{restricted predictive density} in \cite{Walker04} and the
concept of \emph{data-tracking} introduced in \cite{Walker05}, 
relating to Barron's counterexample \cite{Barron99} and that of
Diaconis and Freedman \cite{Diaconis86}: if the prior assigns much
weight to neighbourhoods of $P_0$ that are sensitive to
\emph{data-tracking}, as defined in section~3.3 of \cite{Walker05},
inconsistency of the posterior may occur. From the perspective of
this paper, it appears that data-tracking is controlled sufficiently 
whenever $\pi_{P_0}(W,B)<1$, for a collection of convex sets
$W$ that cover the alternative and a suitably chosen $B$
with $\Pi(B)>0$.


\section*{Acknowledgements}
\label{sec:ack}

The authors would like to thank P. Gr\"unwald for a very useful
suggestion and the referees for this paper for many
helpful comments. BK also thanks Y.~D.~Kim and S.~Petrone
and the {\it Statistics Department of Seoul National University,
South Korea}, and the {\it Dipartimento di Scienze delle Decisioni,
Univesita Bocconi, Milano, Italy} for their kind hospitality. YYZ thanks
the {\it Korteweg-de~Vries Institute} of the {\it University of
Amsterdam} for its hospitality.


\appendix
\section{Some properties of Hellinger transforms}
\label{app:hell}

Given two finite measures $\mu$ and $\nu$,
the Hellinger transform is defined as follows for all $0\leq\al\leq1$:
\[
  \rho_\al(\mu,\nu) = \int \Bigl(\frac{d\mu}{d\sigma}\Bigr)^\al
    \Bigl(\frac{d\nu}{d\sigma}\Bigr)^{1-\al}\,d\sigma,
\]
where $\sigma$ is a $\sigma$-finite measure that dominates both
$\mu$ and $\nu$ (\eg\ $\sigma=\mu+\nu$). 

For $P$ and $Q$ such that $P_0(dP/dQ)<\infty$ define
$d\nu_{P,Q}=(dP/dQ)dP_0$ and note that,
\[
  P_0\Bigl(\frac{dP}{dQ}\Bigr)^\al = \rho_{\al}(\nu_{P,Q},P_0)
    = \rho_{1-\al}(P_0,\nu_{P,Q}).
\]
Properties of the Hellinger transform that are used in the main text
are listed in the following lemma, which extends lemma~6.3 in Kleijn
and van~der~Vaart (2006).
\begin{lemma}
\label{lem:ht}
For a probability measure $P$ and a finite measure $\nu$
(with densities $p$ and $r$ respectively), the function
$\rho:[0,1]\rightarrow\RR:\alpha\mapsto \rho_\alpha(\nu,P)$ is convex
on $[0,1]$ with:
\[
  \rho_\alpha(\nu,P)\rightarrow P(r>0),\quad
    \text{as $\alpha\downarrow 0$},
  \qquad\rho_\alpha(\nu,P)\rightarrow \nu(p>0),\quad
    \text{as $\alpha\uparrow 1$}.
\]
Furthermore, the function $\al\mapsto\rho_\al(\nu,P)$ is continuously
differentiable on $[0,1]$ with derivative,
\[
  \frac{d\rho_\alpha(\nu,P)}{d\alpha}
    = P\,1_{r>0}\,\Bigl(\frac{r}{p}\Bigr)^\al\log(r/p),
\]
(which may be equal to $-\infty$).
\end{lemma}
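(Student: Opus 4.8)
The plan is to fix one $\sigma$-finite measure $\sigma$ dominating both $\nu$ and $P$ (say $\sigma=\nu+P$) and to write $\rho(\alpha):=\rho_\alpha(\nu,P)=\int r^\alpha p^{1-\alpha}\,d\sigma$, where $r=d\nu/d\sigma$ and $p=dP/d\sigma$. The single estimate that does most of the work is the weighted arithmetic--geometric mean inequality $r^\alpha p^{1-\alpha}\le\alpha r+(1-\alpha)p\le r+p$, with $r+p\in L_1(\sigma)$ since $\nu$ is finite and $P$ is a probability; in particular every $\rho(\alpha)$ is finite (H\"older sharpens this to $\rho(\alpha)\le\nu(\scrX)^\alpha$). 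For convexity I would observe that on $\{p>0\}$ the integrand is $p\,e^{\alpha\log(r/p)}$, the exponential of an affine function of $\alpha$, hence convex, whereas on $\{p=0\}$ it is identically $0$ on $[0,1)$; integrating a family of convex functions with finite integrals gives a convex function, so $\rho$ is convex on $[0,1)$, and the extension to $[0,1]$ follows once continuity at $\alpha=1$ is established.

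For the two endpoint limits I would use dominated convergence against $r+p$. As $\alpha\downarrow 0$ one has $r^\alpha\to 1_{\{r>0\}}$ pointwise, hence $r^\alpha p^{1-\alpha}\to 1_{\{r>0\}}\,p$ and $\rho(\alpha)\to\int 1_{\{r>0\}}\,dP=P(r>0)$; as $\alpha\uparrow 1$ one has $p^{1-\alpha}\to 1_{\{p>0\}}$ pointwise, hence $r^\alpha p^{1-\alpha}\to 1_{\{p>0\}}\,r$ and $\rho(\alpha)\to\int 1_{\{p>0\}}\,d\nu=\nu(p>0)$. The latter supplies the continuity at $\alpha=1$ used to finish the convexity claim.

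The heart of the matter, and the step I expect to be the only genuine obstacle, is differentiability together with the derivative formula: since the derivative is permitted to equal $-\infty$ one cannot differentiate under the integral by dominated convergence, so I would argue by monotone convergence instead. On $\{r>0,p>0\}$ the difference quotient of the integrand is $r^{\alpha_0}p^{1-\alpha_0}\cdot\frac{(r/p)^h-1}{h}$, and the elementary fact that for fixed $u\in\RR$ the map $h\mapsto\frac{e^{hu}-1}{h}$ is non-decreasing on $\RR\setminus\{0\}$ and tends to $u$ at $0$ (seen by writing it as $\int_0^1 u\,e^{thu}\,dt$) shows that as $h\downarrow 0$ this difference quotient decreases to $r^{\alpha_0}p^{1-\alpha_0}\log(r/p)$, while as $h\uparrow 0$ it increases to the same limit. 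The monotone convergence theorem for decreasing sequences applies in the first case because for a fixed small $h_1>0$ the difference quotient (the largest in the family) is $\sigma$-integrable, its absolute value being at most $2h_1^{-1}(r+p)$; the monotone convergence theorem for increasing sequences applies in the second because for a fixed $h_1<0$ the difference quotient (the smallest in the family) is bounded below by $-2|h_1|^{-1}(r+p)$. In either direction,
\[
  \lim_{h\to 0}\frac{\rho(\alpha_0+h)-\rho(\alpha_0)}{h}
  =\int_{\{r>0,\,p>0\}}r^{\alpha_0}p^{1-\alpha_0}\log(r/p)\,d\sigma ,
\]
a quantity in $[-\infty,\infty)$ for $\alpha_0<1$: its positive part is integrable because $\log t\le\epsilon^{-1}t^{\epsilon}$ (valid for all $t>0$, $\epsilon>0$), taken with $\alpha_0+\epsilon\le 1$, yields $r^{\alpha_0}p^{1-\alpha_0}\bigl(\log(r/p)\bigr)^{+}\le\epsilon^{-1}(r+p)$. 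Rewriting this integral against $P=p\,d\sigma$ (using $p>0$ $P$-almost surely) produces the asserted expression for $d\rho_\alpha(\nu,P)/d\alpha$; at $\alpha=0$ and $\alpha=1$ the one-sided versions of this computation give the right and left derivatives as the corresponding, possibly infinite, monotone limits.

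Finally, for continuity of the derivative I would note that on each compact subinterval $[\epsilon,1-\epsilon]\subset(0,1)$ the integrand defining $\rho'$ is dominated by $2\epsilon^{-1}(r+p)$ (via $|\log t|\le\epsilon^{-1}(t^{\epsilon}+t^{-\epsilon})$), so dominated convergence gives continuity of $\rho'$ on $(0,1)$; alternatively, a finite convex function that is differentiable throughout an interval is automatically continuously differentiable there. The endpoint one-sided derivatives are then the monotone limits of $\rho'$ from the interior, agreeing with the formula evaluated at $0$ and $1$, the value at $0$ possibly being $-\infty$; together with the previous paragraphs this gives continuous differentiability on $[0,1]$ in the stated sense.
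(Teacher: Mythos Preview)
Your proof is correct and follows essentially the same route as the paper's: convexity of $\alpha\mapsto e^{\alpha y}$ for the convexity claim, pointwise convergence of the integrand for the endpoint limits, and monotonicity of the difference quotients $h\mapsto (e^{hu}-1)/h$ together with monotone convergence for the derivative formula. Your treatment of the endpoint limits is in fact slightly more economical than the paper's---the single dominant $r+p$ obtained from the arithmetic--geometric mean inequality handles both limits at once, whereas the paper splits into the regions $\{0<p<r\}$ and $\{p\ge r\}$ and uses monotone convergence on one and dominated convergence on the other---but the underlying ideas are the same.
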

\begin{proof}
The function $\al\mapsto e^{\al y}$ is convex on $(0,1)$
for all $y\in[-\infty,\infty)$,
implying the convexity of $\al\mapsto \rho_\al(\nu,P)=P(r/p)^\al$
on $(0,1)$. The function $\al\mapsto y^\al=e^{\al\log y}$ is
continuous on $[0,1]$ for any $y>0$, is decreasing for $y<1$, increasing
for $y>1$ and constant for $y=1$. By monotone convergence, as
$\al\downarrow 0$,
\[
  \nu\Bigl(\frac{p}{r}\Bigr)^\al1_{\{0<p<r\}}
  \uparrow \nu\Bigl(\frac{p}{r}\Bigr)^01_{\{0<p<r\}}
  =\nu(0<p<r).
\]
By the dominated convergence theorem (note that $(p/r)^{1/2}1_{\{p\geq r\}}$
upper-bounds $(p/r)^\al1_{\{p\geq r\}}$
for $\al\leq 1/2$) we have,
\[
  \nu\Bigl(\frac{p}{r}\Bigr)^\al1_{\{p\geq r\}}
  \rightarrow \nu\Bigl(\frac{p}{r}\Bigr)^01_{\{p\geq r\}}
  =\nu(p\geq r),
\]
as $\al\downarrow 0$. Combining the two preceding displays, we see that
$\rho_\al(\nu,P)=P(p/r)^\al\rightarrow P(r>0)$ as
$\alpha\downarrow 0$. Upon substitution of $\al$ by $1-\al$, one
finds that $\rho_\al(\nu,P)\rightarrow \nu(p>0)$ as $\al\uparrow1$. 

Let $\al_0\in[0,1]$ be given. By the convexity of $\al\mapsto e^{\al y}$
for all $y\in\RR$, the map
$\al\mapsto f_\al(y)=(e^{\al y}-e^{\al_0 y})/(\al-\al_0)$ decreases to
$y\,e^{\al_0 y}$ as $\al\downarrow\al_0$, and it increases to
$y\,e^{\al_0 y}$ as $\al\uparrow\al_0$. First consider the case that
$\al\geq\al_0$: for $y\leq 0$ we have $f_\al(y)\leq 0$, while for
$y\geq 0$,
\[
  f_\al(y)\leq \sup_{\al_0<\al'\leq\al} y e^{\al' y}\leq y e^{\al y}
    \leq \frac {1}{\ep} e^{(\al+\ep)y},
\]
so that $f_\al(y)\leq 0\vee\ep^{-1}e^{(\al+\ep)y}1_{y\geq 0}$.
Consequently, we have:
\[
  \Bigl(\frac{r}{p}\Bigr)^{\al_0}\frac{e^{(\al-\al_0)\log(r/p)}-1}{\al-\al_0}
    \downarrow\Bigl(\frac{r}{p}\Bigr)^{\al_0}\log\Bigl(\frac{r}{p}\Bigr),
    \quad\text{as $\al\downarrow\al_0$},
\] and is bounded above by
$0\vee \ep^{-1}(r/p)^{\al_0+2\ep}1_{r\geq p}$ for small
$\ep>\al-\al_0>0$, which is $P$-integrable for small enough $\ep$.
We conclude that,
\[
  \frac{1}{\al-\al_0}(\rho_\al(\nu,P)-\rho_{\al_0}(\nu,P))
  \downarrow P\,1_{r>0}\,\Bigl(\frac{r}{p}\Bigr)^{\al_0}
    \log\Bigl(\frac{r}{p}\Bigr),\quad\text{as $\al\downarrow\al_0$},
\]
by monotone convergence. For $\al<\al_0$ a similar argument can be given.
Convexity of $\al\mapsto P\,1_{r>0}\,(r/p)^\al\log(r/p)$ implies
continuity of the derivative.
\end{proof}

\section{Proofs}
\label{app:proofs}

This section contains all proofs of theorems and lemmata in the main text,
as well as some remarks and side-notes.

\subsection{Proofs for section~\ref{sec:cons}}

\begin{proof}{\it (Proposition~\ref{prop:dompriorpred})}\\
For any $A\in\sigma_n:=\sigma(X_1,\ldots,X_n)$ and any
model subset $U'$ such that $\Pi(U')>0$,
\[
  P_0^n(A) \leq \int P^n(A)\,d\Pi(P|U')+\sup_{P\in U'} |P^n(A)-P_0^n(A)|.
\]
Now assume that $A$ is a null-set of $P_n^{\Pi}$; since $\Pi(U')>0$,
$\int P^n(A)\,d\Pi(P|U')=0$. For some $\ep>0$, take
$U'=\{P:|P^n(A)-P_0^n(A)|<\ep\}$, note that $U'$
contains a total-variational neighbourhood and therefore
a Hellinger neighbourhood, to conclude that
$P_0^n(A)<\ep$ for all $\ep>0$.
\end{proof}
\begin{remark}
Sets of the form $U'$ in the proof of
proposition~\ref{prop:dompriorpred} form a sub-basis for a
topology $\scrT_n$ that is weaker than the Hellinger
topology. So the condition of proposition~\ref{prop:dompriorpred} can be
weakened to ``all $\scrT_n$-neighbourhoods of $P_0$''.
(See remark~3.6\,(2) in \cite{Strasser85}.)
\end{remark}
\begin{proof}{\it (Corollary~\ref{cor:KLpriorpred})}\\
Since $-P_0\log(p/p_0)\geq H(P_0,P)$, every Hellinger ball contains a
Kullback-Leibler-ball. So (\ref{eq:KLprior}) implies that $\Pi(U)>0$
for every Hellinger ball $U$. According to
proposition~\ref{prop:dompriorpred}, this implies (\ref{eq:dompriorpred}).
\end{proof}
\begin{proof}{\it (Lemma~\ref{lem:cons})}\\
For a set $V$ covered by measurable $V_1,\ldots,V_N$, almost-sure 
convergence per individual $V_i$ implies the assertion. So we fix
some $1\leq i\leq N$ and note that,
\[
  P_0^n\Pi(V_i|X_1,\ldots,X_n)
  \leq P_0^n\phi_{i,n} + P_0^n\Pi(V_i|X_1,\ldots,X_n)(1-\phi_{i,n}).
\]
By Fubini's theorem,
\begin{equation}
  \label{eq:pivi}
  \begin{split}
  P_0^n\Pi(&V_i|X_1,\ldots,X_n)(1-\phi_{i,n})
    = P_0^n\int_{V_i}\frac{dP^n}{dP_n^{\Pi}}(1-\phi_{i,n})\,d\Pi(P)\\
    &\leq \Pi(V_i)\,\sup_{P\in V_i} P_0^n \frac{dP^n}{dP_n^\Pi}(1-\phi_{i,n}).
  \end{split}
\end{equation}
From (\ref{eq:testcond}) we conclude that
$P_0^n\Pi(V_i|X_1,\ldots,X_n)\leq e^{-nD_i}$, for
large enough $n$. Apply Markov's inequality to find that,
\[
  P_0^n\bigl(\, \Pi(V_i|X_1,\ldots,X_n) \geq e^{-\ft{n}2D_i}\, \bigr)
    \leq e^{-\ft{n}2D_i},
\]
so that the first Borel-Cantelli lemma guarantees,
\[
  P_0^\infty\Bigl(\, \limsup_{n\rightarrow\infty}
    \bigl( \Pi(V_i|X_1,\ldots,X_n) - e^{-\ft{n}2D_i} \bigr) >0 \,\Bigr) = 0.
\]
Replicating this argument for all $1\leq i\leq N$, assertion
(\ref{eq:aszero}) follows.
\end{proof}
\begin{remark}
\label{rem:pivi}
In going from (\ref{eq:pivi})
to the next bound, a factor $\Pi(V_i)$ was (upper bounded by one and)
omitted. This factor does not play a role in the proof of
theorem~\ref{thm:main}, but it is crucial in
subsection~\ref{sub:walker}.
\end{remark}
\begin{remark} The exponential bound for testing power is sufficient for
$P_0$-almost-sure convergence of the posterior for $V$,
\cf\ (\ref{eq:aszero}). If only convergence in $P_0$-probability
is required, the right-hand side of (\ref{eq:testcond}) may be relaxed
to be of order $o(1)$.
\end{remark}
\begin{example}
\label{ex:lecamdim}
Suppose that we wish to prove consistency relative to
some metric $d$ on $\scrP$ but coverings of the model by $d$-balls are
not finite. Then we may try the following construction: for $\ep>0$,
we define $W=\{P\in\scrP:d(P,P_0)>\ep\}$ and $W_k=\{P\in\scrP:2^{k-1}\,\ep
\leq d(P,P_0)<2^k\,\ep\}$, ($k\geq1$). Assume that the covering numbers
$N_k:=N_k(2^{k-2}\ep,W_k,d)$ of the model subsets $W_k$ (related to the
so-called \emph{Le~Cam dimension} of the model \cite{LeCam73}) are
finite. Let $V_{k,1},\ldots,V_{k,N_k}$ be $d$-balls of radius $2^{k-2}\ep$
covering $W_k$. Assume that for every $d$-ball
$V_{k,i}$, ($1\leq i\leq N_k$), there exists a test sequence
$(\phi_{k,i,n})_{n\geq1}$ such that (\ref{eq:testcond}) is satisfied
with $D_{k,i}\geq d^2(V_{k,i},P_0)$. Then, for every $n\geq1$,
\[
  \begin{split}
  P_0^n\Pi(&W|X_1,\ldots,X_n)\\[1ex]
  &\leq \sum_{k\geq1} \sum_{1\leq i\leq N_k} P_0^n\Pi(V_{k,i}|X_1,\ldots,X_n)
  \leq \sum_{k\geq1} N_k\,e^{-2^{2k-4}\,n\,\ep^2}.
  \end{split}
\]
If we show that the right-hand side goes to zero as
$n\rightarrow\infty$, the posterior is $d$-consistent.
\end{example}
\begin{proof}{\it (Lemma~\ref{lem:ntests})}\\
We appeal to an argument from \cite{Kleijn06} based on the minimax
theorem (see, \eg, theorem~45.8 in Strasser (1985) \cite{Strasser85}).
According to lemma~6.1 of \cite{Kleijn06} there exists a
test $(\phi_n)$ that minimizes the \lhs\ of (\ref{eq:testpower}) and,
\[
  \sup_{P\in V} \Bigl( P_0^n\phi_n +
   P_0^n \frac{dP^n}{dP_n^\Pi}(1-\phi_n) \Bigr)
    \leq \sup_{P_n\in\co{V^n}} \vinf_{\phantom{|}\phi\phantom{|}} \Bigl( P_0^n\phi +
      P_0^n\frac{dP_n}{dP_n^\Pi}(1-\phi) \Bigr).
\]
The infimal $\phi$ on the right-hand side may now be chosen
specifically tuned to $P_n$, and equals the indicator
$\phi=1_{\{ dP_n/dP_n^\Pi > 1\}}$. For any $\al\in[0,1]$,
\[
  \int 1_{\{ dP_n/dP_n^\Pi > 1\}}\,dP_0^n +
      \int \frac{dP_n}{dP_n^\Pi}\,1_{\{ dP_n/dP_n^\Pi \leq 1\}}\,dP_0^n
  \leq
  \int \Bigl(\frac{dP_n}{dP_n^\Pi}\Bigr)^\al\,dP_0^n,
\]
which enables an upper-bound for testing power,
\[
  \begin{split}
    \sup_{P_n\in\co{V^n}} \Bigl( P_0^n\,&1_{\{ dP_n/dP_n^\Pi > 1\}} +
      P_0^n \frac{dP_n}{dP_n^\Pi}\,1_{\{ dP_n/dP_n^\Pi \leq
      1\}} \Bigr)\\
    &\leq \sup_{P_n\in\co{ V^n}}\vinf_{\phantom{|}0\leq\al\leq1\phantom{|}}
      P_0^n\Bigl(\frac{dP_n}{dP_n^\Pi}\Bigr)^\al,
  \end{split}
\]
 in terms of the Hellinger transform.
\end{proof}
\begin{remark}
For some background on Hellinger transforms, see section~3.6 of
\cite{LeCam86} as well as the lemma of appendix~\ref{app:hell}.
Hellinger transforms occur naturally in minimax problems: compare,
for example, with remark~2 in section~16.4 of \cite{LeCam86}, or
with inequality~(6.39) in \cite{Kleijn06}.
\end{remark}
The proofs of lemmata~\ref{lem:cons} and
\ref{lem:ntests} do not depend on \iid-ness of the data and could have
been formulated in the same form for models of dependent
or non-identically distributed samples. The following lemma \emph{does}
rely on the \iid\ property of the sample and reduces the testing
criterion to an $n$-independent condition, \cf\ (\ref{eq:power}).\par
\begin{proof}{\it (Lemma~\ref{lem:HTBnd})}\\
Let $0\leq\al\leq1$ be given. Note that for all $n\geq1$, 
$P_n^\Pi(A)\geq \Pi(B)\,P_n^{\Pi|B}(A)$ for all $A\in\sigma(X_1,\ldots,X_n)$.
Combining that with the convexity of $x\mapsto x^{-\al}$ on $(0,\infty)$,
we see that,
\begin{equation}
  \label{eq:PQ}
  P_0^n\Bigl(\frac{dP_n}{dP_n^\Pi}\Bigr)^\al
    \leq \Pi(B)^{-\al}\,P_0^n\Bigl(\frac{dP_n}{dP_n^{\Pi|B}}\Bigr)^\al
    \leq \Pi(B)^{-\al}\,P_0^n\int \Bigl(\frac{dP_n}{dQ^n}\Bigr)^{\al}
      \,d\Pi(Q|B).
\end{equation}
With the use of Fubini's theorem and lemma~6.2 in Kleijn and
van~der~Vaart (2006) \cite{Kleijn06} which
says that Hellinger transforms factorize when taken over convex hulls
of products, we find:
\[
  \begin{split}
  \sup_{P_n\in\co{V^n}}\vinf_{\phantom{|}0\leq\al\leq1\phantom{|}}\Pi(B)^{-\al}
    &\int P_0^n\Bigl(\frac{dP_n}{dQ^n}\Bigr)^{\al} d\Pi(Q|B)\\
    &\leq \inf_{0\leq\al\leq1}\Pi(B)^{-\al}\int \sup_{P_n\in\co{V^n}}
      P_0^n\Bigl(\frac{dP_n}{dQ^n}\Bigr)^{\al} d\Pi(Q|B)\\
    &\leq \inf_{0\leq\al\leq1}\Pi(B)^{-\al}\int\Bigl[\sup_{P\in \co{V}}
      P_0\Bigl(\frac{dP}{dQ}\Bigr)^{\al}\Bigr]^n d\Pi(Q|B).
  \end{split}
\]
Applying (\ref{eq:PQ}) with $\al=1$, $P_n=P^n$, and using that 
for all $P\in V$, $\sup_{Q\in B} P_0(dP/dQ)<\infty$, we see that also
$P_0^n({dP^n}/{dP_n^\Pi})<\infty$. By (\ref{eq:testpower}), we obtain
(\ref{eq:iidpower}).
\end{proof}

\subsection{Proofs for section~\ref{sec:variations}}

\begin{proof}{\it (Lemma~\ref{lem:KLequiv})}\\
Assume that (\ref{eq:KLsep}) holds. Lemma~\ref{lem:ht} says that
$\al\mapsto P_0(dP/dQ)^\al$ is convex and
continuously differentiable on $(0,a)$. So for all $\al\in(0,a)$,
\begin{equation}
  \label{eq:abscont}
  \sup_{Q\in B}\sup_{P\in W} P_0\Bigl(\frac{dP}{dQ}\Bigr)^\al
    \leq 1 + \al\,\sup_{Q\in B} \sup_{P\in W}
     P_0\Bigl(\frac{dP}{dQ}\Bigr)^{\al}\log\frac{dP}{dQ}.
\end{equation}
The function
\[
  \al\mapsto \sup_{Q\in B}\sup_{P\in W}
    P_0\Bigl(\frac{dP}{dQ}\Bigr)^{\al}\log\frac{dP}{dQ},
\]
is convex (hence continuous on $(0,a)$ and upper-semi-continuous
at $0$) and, due to (\ref{eq:KLsep}), strictly negative at $\al=0$.
As a consequence, there exists an interval $[0,\al_0]$ on which the
function in the above display is strictly negative. Based on
(\ref{eq:abscont}) there exists an $\al_0\in[0,1]$ such that
$\sup_{P,Q}P_0({dP/dQ})^{\al_0}<1$ and we conclude that (\ref{eq:powerbnd})
holds. Conversely, assume that (\ref{eq:KLsep}) does not hold. Let
$P\in W$, $Q\in B$ and $\al\in[0,1]$ be given; by Jensen's inequality,
\[
  P_0\Bigl(\frac{dP}{dQ}\Bigr)^\al
    \geq \exp\Bigl(\al P_0\log\frac{dP}{dQ}\Bigr)
    = \exp\biggl( \al \Bigl( P_0\log\frac{dP}{dP_0}
        - P_0\log\frac{dQ}{dP_0}\Bigr)\biggr).
\]
Therefore,
\[
  \sup_{Q\in B}\sup_{P\in W}P_0\Bigl(\frac{dP}{dQ}\Bigr)^\al
  \geq 
    \exp\Bigl(\al\sup_{Q\in B} -P_0\log\frac{dQ}{dP_0}\Bigr)
    \exp\Bigl(-\al\inf_{P\in W} -P_0\log\frac{dP}{dP_0}\Bigr),
\]
which is greater than or equal to one for all $\al\in[0,1]$.
\end{proof}
\begin{proof}{\it (Theorem~\ref{thm:schwartz})}\\
Let $\ep>0$ be given. If one covers the complement of a
Hellinger ball $V$ of radius $2\ep$ centred on $P_0$ by a finite,
convex cover of Hellinger balls $V_i$ ($1\leq i\leq N$) of radii $\ep$,
then for every $P\in V_i$, $-P_0\log(dP/dP_0)\geq H(P,P_0) >
\ep$. Together with the upper bound on the Kullback-Leibler-divergence of
$Q$ with respect to $P_0$ for all $Q\in B$ formulated by (\ref{eq:KLprior}),
this implies that (\ref{eq:KLsep}) is satisfied.
Under the assumption that there exists a Kullback-Leibler neighbourhood
$B$ such that $\sup_{Q\in B}P_0(dP/dQ)<\infty$ for all $P\in V$ and
(\ref{eq:KLprior}), lemma~\ref{lem:HTBnd} guarantees the existence of
tests that satisfy the testing condition of lemma~\ref{lem:cons}.
Because every total-variational ball contains a Kullback-Leibler
neighbourhood, proposition~\ref{prop:dompriorpred} asserts that
(\ref{eq:dompriorpred}) is satisfied and we conclude that 
the posterior satisfies (\ref{eq:Schwartzconsistency}).
\end{proof}
\begin{proof}{\it (Theorem~\ref{thm:KLDcons})}\\
For every $1\leq i\leq N$, there exists a constant $b_i>0$ such that
for every $P\in W_i:=\co{V_i}$, $-P_0\log(dP/dP_0) > b_i$.
Denoting the Kullback-Leibler radius of $B$ by $b>0$, we define
$B_i=\{P\in\scrP:-P_0\log(dP/dP_0)<b_i\wedge b\}$ to satisfy
(\ref{eq:KLsep}). Note that, by assumption, $\Pi(B_i)>0$ and
$\sup_{Q\in B_i}P_0(dP/dQ)<\infty$ for all $P\in\scrP$.
Every total-variational neighbourhood of $P_0$ contains a
Kullback-Leibler neighbourhood, so combination of lemma~\ref{lem:KLequiv},
lemma~\ref{lem:HTBnd} and theorem~\ref{thm:main} proves posterior
consistency in Kullback-Leibler divergence \cf\ (\ref{eq:KLDcons}).
\end{proof}
\begin{proof}{\it (Theorem~\ref{thm:hell})}\\
Proposition~\ref{prop:dompriorpred} guarantees that $P_0^n \ll P_{n}^{\Pi}$,
for all $n\geq1$.
For given $\ep>0$, let $V$ denote $\{P\in\scrP:H(P,P_0)>2\ep\}$.
Since $\scrP$ is totally bounded in the Hellinger metric, there exist
$P_1,\ldots,P_N$ such that the model subsets 
$V_i=\{P\in\scrP:H(P,P_i)<\ep\}$ form a cover of $V$. On the basis of
the constant $L$ of (\ref{eq:elltwoball}), define
$B=\{Q\in\scrP:H(Q,P_0)<\ep^2/(4L)\wedge\ep'\}$, where $\ep'$ is the
Hellinger radius of $B'$. Since Hellinger balls are
convex, we have for all $1\leq i\leq N$,
\[
  \sup_{P\in \co{V_i}} \sup_{Q\in B} P_0\Bigl(\frac{p}{q}\Bigr)^{1/2}
    \leq 1-\ft14\ep^2 \leq e^{-\ft14\ep^2}.
\]
By the Cauchy-Schwarz inequality, for every $P\in V$,
\[
  \sup_{Q\in B} P_0\Bigl(\frac{p}{q}\Bigr)\leq \sup_{Q\in B}
    \Bigl\|\frac{p_0}{q}\Bigr\|_{2,Q}\,
    \Bigl\|\frac{p}{q}\Bigr\|_{2,Q} < L^2 <\infty.
\]
According to lemmata~\ref{lem:HTBnd} and~\ref{lem:cons},
the posterior is consistent.
\end{proof}
\begin{proof}{\it (Theorem~\ref{thm:drcons})}\\
Reasoning like in the introduction of subsection~\ref{sub:hellpriors},
but now with H\"older's inequality, one finds,
\[
  P_0\Bigl(\frac{p}{q}\Bigr)^{1/r}
    \leq \rho_{1/r}(P,P_0)
      +d_r(P_0,Q)\,\bigl(P_0(p/q)^{s/r}\bigr)^{1/s}
\]
Let $\ep>0$ be given and let $V$ be the complement of a $d_r$-ball of
radius $2\ep$. Cover $V$ by $N$ $d_r$-balls $V_1,\ldots,V_N$ of radii
$\ep$ (which are convex) and note that for all $1\leq i\leq N$ and
$P\in V_i$, $d_r(P,P_0)\geq\ep$. It is shown in the corollary of
theorem~1 of \cite{Toussiant72} that,
\[
  \rho_{1/r}(P_0,P)\leq\Bigl(\frac{2(r-1)}{r}\Bigr)^{1/2}
    \bigl( 1-\ft14\,d_r(P,P_0)^{2r}\bigr)^{1/2},
\]
and with $K=(2(r-1)/r)^{1/2}$, it follows that, 
\[
  P_0\Bigl(\frac{p}{q}\Bigr)^{1/r}
    \leq K(1-\ft14\,\ep^{2r})^{1/2} + L^{s/r\wedge1}\,d_r(P_0,Q).
\]
Since $(1-x)^{1/2}\leq1-\ft12x$ for $x\in(0,1)$, the choice
$\delta=(K/16)L^{-(s/r\wedge1)}\ep^{2r}$ in (\ref{eq:drprior})
guarantees that $P_0(p/q)^{1/r}\leq K(1-(1/16)\ep^{2r})$ for all
$1\leq i\leq N$, $P\in V_i$ and $Q\in B$. If $s\geq r$, Jensen's
inequality implies that $\sup_{Q\in B}P_0(p/q)<\infty$; if $s<r$,
$\sup_{Q\in B}P_0(p/q)<\infty$ by assumption. 
According to lemma~\ref{lem:HTBnd} and
lemma~\ref{lem:cons}, the posterior is consistent.
\end{proof}
\begin{proof}{\it (Lemma~\ref{lem:KLcont})}\\
Continuity implies that every Kullback-Leibler ball around $P_0$
contains an open neighbourhood of $P_0$.
\end{proof}

\subsection{Proofs for section~\ref{sec:sep}}

\begin{proof}{\it (Theorem~\ref{thm:barron})}\\
For given $V$ and $n\geq1$, denote the cover of condition~{\it(i.)} by
$V_1,\ldots,V_{N_n}$ with tests $\phi_{i,n}$, $1\leq i\leq N_n$. Define
$\psi_n=\max_i\phi_{i,n}$ and decompose the $n$-th posterior for $V$
as follows,
\[
  \begin{split}
  P_0^n \Pi(&V|X_1,\ldots,X_n) \leq P_0^n\psi_n+\\[1ex]
    & P_0^n\Pi(V\cap\scrP_n|X_1,\ldots,X_n)(1-\psi_n)
    + P_0^n\Pi(V\setminus\scrP_n|X_1,\ldots,X_n).
  \end{split}
\]
The first term is upper bounded geometrically, $P_0^n\psi_n\leq
\sum_{i=1}^{N_n}P_0^n\phi_{i,n}\leq N_n\exp(-nL)\leq\exp(-\ft12nL)$,
as is the second term, namely,
\[
  \begin{split}
  P_0^n\Pi(V\cap\scrP_n&|X_1,\ldots,X_n)(1-\psi_n)\\
    &\leq\sum_{i=1}^{N_n}P_0^n\Pi(V_i\cap\scrP_n|X_1,\ldots,X_n)(1-\psi_n)\\
    &\leq\sum_{i=1}^{N_n}P_0^n\Pi(V_i\cap\scrP_n|X_1,\ldots,X_n)(1-\phi_{i,n})\\
    &\leq\sum_{i=1}^{N_n}\sup_{P\in V_i}P_0^n \frac{dP^n}{dP_n^\Pi}(1-\phi_{i,n})
    \leq N_n\,e^{-nL} \leq e^{-\ft12nL},
  \end{split}
\]
where we have followed the steps in the proof of theorem~\ref{thm:main}.
Using again the local prior predictive distribution $P_n^{\Pi|B}$
of (\ref{eq:locpriorpred}), the third term satisfies,
\[
  \begin{split}
  P_0^n\Pi(&V\setminus\scrP_n|X_1,\ldots,X_n) 
    \leq P_0^n\Pi(\scrP\setminus\scrP_n|X_1,\ldots,X_n)\\[1ex]
    &= \int_{\scrP\setminus\scrP_n} P_0^n\Bigl(\frac{dP^n}{dP_n^\Pi}\Bigr)d\Pi(P)
    \leq \frac1{\Pi(B)}\int_{\scrP\setminus\scrP_n}
      P_0^n\Bigl(\frac{dP^n}{dP_n^{\Pi|B}}\Bigr)d\Pi(P)\\
    &\leq \frac{\Pi(\scrP\setminus\scrP_n)}{\Pi(B)}
      \sup_{P\in \scrP\setminus\scrP_n}\sup_{Q\in B}\bigl[P_0(p/q)\bigr]^n
    \leq \Pi(B)^{-1} e^{-\ft12Kn}.
  \end{split}
\]
Like at the end of the proof of lemma~\ref{lem:cons}, an application of the
Borel-Cantelli proves the assertion.
\end{proof}
\begin{remark}
A generalization of condition {\it (ii.)} in theorem~\ref{thm:barron}
concerns $n$-dependence in the choice for $B$. Clearly, balancing
of exponential factors then involves an exponential lower bound for
the sequence $(\Pi(B_n))$ as well. However, this remark and other points
of generalization or flexibility more naturally fit into a discussion
of posterior rates of convergence \cite{Barron99,Ghosal00,KleijnXX}.
\end{remark}
\begin{proof}{\it (Theorem~\ref{thm:sep})}\\
By monotone convergence,
\[
  \begin{split}
  P_0^n &\Pi(V|X_1,\ldots,X_n)\\
  &\leq P_0^n \Pi\bigl(\,\cup_{i\geq1}V_i\bigm|X_1,\ldots,X_n\bigr)
  \leq \sum_{i\geq1} P_0^n \Pi(V_i|X_1,\ldots,X_n).
  \end{split}
\]
We treat the terms
in the sum separately with the help of test sequences $(\phi_{i,n})$,
for all $i\geq1$, following the proof of lemma~\ref{lem:cons}:
\begin{equation}
  \label{eq:sumvis}
  P_0^n \Pi(V|X_1,\ldots,X_n)
 \leq \sum_{i\geq1} \Bigl(
    P_0^n\phi_{i,n} + 
    \Pi(V_i)\,\sup_{P\in{V_i}} P_0^n\frac{dP^n}{dP_n^\Pi}(1-\phi_{i,n})\Bigr).
\end{equation}
(Note that, here, we maintain the factor $\Pi(V_i)$ of
remark~\ref{rem:pivi}, for reasons that will become clear shortly.)
Like in the proof of lemma~\ref{lem:HTBnd}, the assumptions that
$\sup_{Q\in B_i} P_0(dP/dQ)<\infty$ and $\Pi(B_i)>0$, imply that
$P_0^n({dP^n}/{dP_n^\Pi})<\infty$, for all $P\in V_i$. So $\phi_{i,n}$
can be chosen in such a way that,
\begin{equation}
  \label{eq:mmbnd}
  \begin{split}
  P_0^n\phi_{i,n} &+ 
    \Pi(V_i)\,\sup_{P\in{V_i}} P_0^n\frac{dP^n}{dP_n^\Pi}(1-\phi_{i,n})\\
  &= \sup_{P_n\in\co{V_i^n}}\vinf_{\phantom{|}\phi\phantom{|}} \Bigl(
    P_0^n\phi + \Pi(V_i)\,P_0^n\frac{dP_n}{dP_n^\Pi}(1-\phi)\Bigr)
  \end{split}
\end{equation}
by the minimax theorem. To minimize the \rhs, choose $\phi$ as follows,
\[
  \phi(X_1,\ldots,X_n) 
    = 1\Bigl\{(X_1,\ldots,X_n)\in\scrX^n\,:\,
      \Pi(V_i)\,\frac{dP^n}{dP^{\Pi}_n}(X_1,\ldots,X_n)>1\Bigr\},
\]
and follow the proof of lemma~\ref{lem:HTBnd} to conclude that the
\rhs\ of (\ref{eq:mmbnd}) is upper bounded by,
\[
  \inf_{0\leq\al\leq1} \frac{\Pi(V_i)^\al}{\Pi(B_i)^\al}
    \Bigl[
      \sup_{P\in\co{V_i}}\sup_{Q\in B_i}P_0\Bigl(\frac{dP}{dQ}\Bigr)^\al
    \Bigr]^n.
\]
Combine with (\ref{eq:sumvis}) to arrive at the assertion.
\end{proof}
\begin{remark}
If the relevant metric is
the Hellinger metric, separability of the model has a remarkable
equivalent formulation (see sections~4 and~21 in \cite{Strasser85}):
a collection $\scrP$ of probability measures $P:\scrA\to[0,1]$
is Hellinger separable, if and only if, $\scrP$ is dominated by
a $\sigma$-finite measure and $\scrA$ is a countably generated
$\sigma$-algebra.
\end{remark}
\begin{proof}{\it (Corollary~\ref{cor:varwalker})}\\
Fix $\al=1/2$ and $B_i=B$ in (\ref{eq:prewalker}) and use
(\ref{eq:tstpwr}) to arrive at,
\[
  \begin{split}
  P_0^n&\Pi(V|X_1,\ldots,X_n)\\
    &\leq \Pi(B)^{-1/2}
      \sum_{i\geq1} \Pi(V_i)^{1/2}
      \Bigl[
        \sup_{P\in\co{V_i}}\sup_{Q\in B}P_0\Bigl(\frac{dP}{dQ}\Bigr)^{1/2}
      \Bigr]^n\\
    &\leq \Pi(B)^{-1/2} (1-\gamma)^n \sum_{i\geq1} \Pi(V_i)^{1/2},
  \end{split}
\]
(for some constant $0<\gamma<1$), which goes to zero at geometric rate
if (\ref{eq:walkercond}) holds. This implies that
$\Pi(V|X_1,\ldots,X_n)\convas{P_0}0$.
\end{proof}
\begin{proof}{\it (Corollary~\ref{cor:onlysum})}\\
Given $\ep>0$, define $V=\{P:H(P,P_0)\geq\ep\}$ and let $\{V_i:i\geq1\}$
denote a countable collection of Hellinger balls of radius $\ft14\ep$
with centre points in $V$ that cover $V$, so that,
\begin{equation}
  \label{eq:unifH}
  \inf_{i\geq1} \inf_{P\in\co{V_i}} H(P,P_0) \geq \ft34\ep.
\end{equation}
Inspection of the proof of lemma~\ref{lem:KLequiv} reveals that it
generalizes to the statement that:
\[
  \vinf_{\phantom{|}0\leq\al\leq1\phantom{|}}
    \sup_{i\geq1}
      \sup_{Q\in B}
        \sup_{P\in\co{V_i}}P_0\Bigl(\frac{dP}{dQ}\Bigr)^{\al}<1,
\]
if and only if, $B$ and the $\co{V_i}$ separate in
Kullback-Leibler divergence in the following way,
\[
  \sup_{Q\in B}-P_0\log\frac{dQ}{dP_0}
    <
  \vinf_{\phantom{|}i\geq1\phantom{|}} \inf_{P\in\co{V_i}}-P_0\log\frac{dP}{dP_0},
\]
Note that (\ref{eq:unifH}) serves as a lower bound for the \rhs\ of
the previous display, which enables the choice
$B=\{P\in\scrP:-P_0\log(p/p_0)<\ep/{4}\}$
to guarantee that there exist constants $0<\al',\gamma<1$ such that,
\[
  P_0^n\Pi(V|X_1,\ldots,X_n)
    \leq \Pi(B)^{-\al'} (1-\gamma)^{n\al'} \sum_{i\geq1}\Pi(V_i)^{\al'},
\]
which goes to zero since $\Pi(B)>0$ and the sum is finite by assumption.
\end{proof}

\subsection{Proofs for section~\ref{sec:boundary}}

\begin{proof}{\it (Theorem~\ref{thm:domain})}\\
Let $\ep>0$ be given and consider the (equivalent) metric
$g:\Tht\times\Tht\rightarrow[0,\infty)$ defined by $g(\tht,\tht')=
\max\{|\tht_1-\tht'_1|,|\tht_2-\tht'_2|\}$. Define
$V=\{P_{\tht,\eta}\in\scrP:g(\tht,\tht')>\ep\}$. Concentrate on the
cases $\al=0+$ and $\al=1-$; pick $0<\delta<f(\ep/\sigma)/(2K)$ and
define $B$ as above. Lemma~\ref{lem:ht} says that for all $P\in V$
and $Q\in B$,
\[
  \begin{split}
  P_0\Bigl(\frac{dP}{dQ}\Bigr)^{0+}&=P_0(p>0),\\
  P_0\Bigl(\frac{dP}{dQ}\Bigr)^{1-}&=
    \int{\frac{dP_0}{dQ}}\,1_{\{p_0>0,p>0,q>0\}}\,dP\\
    &\leq P(p_0>0) + \int \Bigl|\frac{dP_0}{dQ}-1\Bigr|1_{\{q>0\}}\,dP\\
    &\leq P(p_0>0) + \Bigl\|\frac{dP_0}{dQ}-1\Bigr\|_{s,Q}\,
      \Bigl\|\frac{dP}{dQ}\Bigr\|_{r,Q}
    < P(p_0>0) + \ft12 f\bigl(\ft{\ep}{\sigma}\bigr),
  \end{split}
\]
by H\"older's inequality. Note that every total-variational neighbourhood
of $P_0$ contains a model subset of the form $B$ and, by assumption,
$\Pi(B)>0$, so that proposition~\ref{prop:dompriorpred} guarantees that
$P_0^n \ll P_{n}^{\Pi}$ for all $n\geq1$. For all $P\in V$,
$\sup_{Q\in B} P_0(dP/dQ)\leq 1+\ft12 f(\ep/\sigma)<\infty$ and for
all $Q\in B$, we have,
\[
  \inf_{0\leq\al\leq1} P_0\Bigl(\frac{dP}{dQ}\Bigr)^{\al}
  \leq \min\bigl\{P_0(p>0),P(p_0>0)\bigr\}+\ft12 f\bigl(\ft{\ep}{\sigma}\bigr),
\]
as an upper bound for testing power.

Identify $P_0$ and $P$ with parameters $(\tht_0,\eta_0)$
and $(\tht,\eta)$, writing $P_0=P_{(\tht_{0,1},\tht_{0,2}),\eta_0}$ and
$P=P_{(\tht_1,\tht_2),\eta}$. By definition of $V$, the support
intervals for $p$ and $p_0$ are disjoint by an interval of
length greater than or equal to $\ep$. Cover $V$ by four sets,
$V_{+,1}=\{P_{\tht,\eta}:\tht_1\geq\tht_{0,1}+\ep,\eta\in H\}$,
$V_{-,1}=\{P_{\tht,\eta}:\tht_1\leq\tht_{0,1}-\ep,\eta\in H\}$,
$V_{+,2}=\{P_{\tht,\eta}:\tht_2\geq\tht_{0,2}+\ep,\eta\in H\}$ and
$V_{-,2}=\{P_{\tht,\eta}:\tht_2\leq\tht_{0,2}-\ep,\eta\in H\}$.
For $P\in \co{V_{+,1}}$, we have,
\[
  \begin{split}
  P_0(p=0)&\geq\int_{\tht_{0,1}}^{\tht_{0,1}+\ep}p_0(x)\,dx
  = \int_{\tht_{0,1}}^{\tht_{0,1}+\ep}\frac1{\tht_{0,2}-\tht_{0,1}}
    \eta_0\Bigl(\frac{x-\tht_{0,1}}{\tht_{0,2}-\tht_{0,1}}\Bigr)dx\\
  &= \int_0^{\ep/(\tht_{0,2}-\tht_{0,1})}\eta_0(z)\,dz
  \geq \int_0^{\ft{\ep}{\sigma}}\eta_0(z)\,dz\geq f\bigl(\ft{\ep}{\sigma}\bigr),
  \end{split}
\]
using (\ref{eq:lower}).
For $P\in\co{V_{-,1}}$, with some $I\geq1$ write
$P=\sum_{i=1}^I\lambda_i\,P_i$ with $\sum_{i=1}^I\lambda_i=1$
and $\lambda_i\geq0$, $P_i=P_{\tht_i,\eta_i}$ for
$\tht_i=(\tht_{i,1},\tht_{i,2})$ with $\tht_{i,1}\leq\tht_{0,1}-\ep$
and $\eta_i\in H$, for all $1\leq i\leq I$. Note that,
\[
  \begin{split}
  P(p_0=0) &= \sum_{i=1}^I\lambda_i\,P_i(p_0=0)
  \geq \sum_{i=1}^I\lambda_i\int_{\tht_{i,1}}^{\tht_{i,1}+\ep}p_i(x)\,dx\\
  &= \sum_{i=1}^I\lambda_i\int_{\tht_{i,1}}^{\tht_{i,1}+\ep}
    \frac1{\tht_{i,2}-\tht_{i,1}}
    \eta_i\Bigl(\frac{x-\tht_{i,1}}{\tht_{i,2}-\tht_{i,1}}\Bigr)dx\\
  &= \sum_{i=1}^I\lambda_i \int_0^{\ep/(\tht_{i,2}-\tht_{i,1})}\eta_i(z)\,dz
  \geq \sum_{i=1}^I\lambda_i \int_0^{\ft{\ep}{\sigma}}\eta_i(z)\,dz
  \geq f\bigl(\ft{\ep}{\sigma}\bigr),
\end{split}
\]
using (\ref{eq:lower}). Analogously we obtain bounds for $P\in\co{V_{+,2}}$
and $P\in\co{V_{-,2}}$, giving rise to the inequalities
\begin{equation}
  \label{eq:bndPnullPoverQ}
  \sup_{P\in\co{V_{\cdot}}}\min\bigl\{P_0(p>0),P(p_0>0)\bigr\}
    \leq 1-f\bigl(\ft{\ep}{\sigma}\bigr),
\end{equation}
for $V_{\cdot}$ equal to $V_{+,1}$, $V_{-,1}$, $V_{+,2}$ and $V_{-,2}$.  
Combination of lemma~\ref{lem:HTBnd} and theorem~\ref{thm:main} now
shows that,
\[
  \Pi\bigl(\,g(\tht,\tht_0)<\ep\bigm|X_1,\ldots,X_n\,\bigr)
  \convas{P_0}1.
\]
The topology associated with the metric $g$ on $\Tht$
is equivalent to the restriction to $\Tht$ of the usual norm
topology on $\RR^2$, so that consistency with respect to the
pseudo-metric $g$ is equivalent to (\ref{eq:consdomain}).
\end{proof}

\subsection{Proofs for section~\ref{sec:rates}}

\begin{proof}{\it (theorem~\ref{thm:sieve})}\\
Fix $n\geq1$ large enough to satisfy conditions~{\it(i)}
and~{\it(ii)}.
According to lemma~\ref{lem:HTBnd}, there exist test functions
$\phi_{n,i}:\mathcal{X}^n\rightarrow[0,1]$ for all $1\leq i\leq N_n$,
such that, for all $\al\in[0,1]$,
\[
  P_0^n\phi_{n,i} + \sup_{P\in V_{n,i}}
    P_0^n\frac{dP^n}{d\Pi_n^\Pi}(1-\phi_{n,i})
    \leq \Pi(B_n)^{-\al} \pi_{P_0}(\co{V_{n,i}},B_n;\al)^n.
\]
Define
$\psi_n=\max_i\phi_{n,i}$ and decompose the $n$-th posterior for
$V_n=\{P\in\scrP:d(P,P_0)\geq\ep_n\}$, as follows,
\[
  \begin{split}
  P_0^n &\Pi(V_n|X_1,\ldots,X_n) \leq P_0^n\psi_n\\[1ex]
    & + P_0^n\Pi(V_n\cap\scrP_n|X_1,\ldots,X_n)(1-\psi_n)
    + P_0^n\Pi(\scrP\setminus\scrP_n|X_1,\ldots,X_n).
  \end{split}
\]
The first term is upper-bounded as follows,
\[
  P_0^n\psi_n\leq \sum_{i=1}^{N_n}P_0^n\phi_{n,i}
  \leq N_n \Pi(B_n)^{-\al} \pi_{P_0}(\co{V_{n,i}},B_n;\al)^n
  \leq e^{ (\frac{\al K}{2}- L)n\ep^2},
\]
for all $\al\in[0,1]$. The second term is bounded by,
\[
  \begin{split}
  P_0^n&\Pi(V_n\cap\scrP_n|X_1,\ldots,X_n)(1-\psi_n)
  \leq
  \sum_{i=1}^{N_n}P_0^n\Pi(V_{n,i}|X_1,\ldots,X_n)(1-\psi_n)\\
  &\leq
  \sum_{i=1}^{N_n}P_0^n\Pi(V_{n,i}|X_1,\ldots,X_n)(1-\phi_{n,i})
  \leq
  \sum_{i=1}^{N_n}\sup_{P\in V_{n,i}}
    P_0^n\frac{dP^n}{d\Pi_n^\Pi}(1-\phi_{n,i})\\[1ex]
  &\leq N_n \Pi(B_n)^{-\al} \pi_{P_0}(\co{V_{n,i}},B_n;\al)^n
  \leq e^{ (\frac{\al K}{2}- L)n\ep^2}\phantom{\Bigl(\Bigr)}
  \end{split}
\]
for all $\al\in[0,1]$. The third term requires condition~{\it(ii)},
\[
  \begin{split}
  P_0^n\Pi(&\scrP\setminus\scrP_n|X_1,\ldots,X_n)\\
    &= \int_{\scrP\setminus\scrP_n} P_0^n\Bigl(\frac{dP^n}{dP_n^\Pi}\Bigr)d\Pi(P)
    \leq \frac1{\Pi(B_n)}\int_{\scrP\setminus\scrP_n}
      P_0^n\Bigl(\frac{dP^n}{dP_n^{\Pi|B_n}}\Bigr)d\Pi(P)\\
    &\leq \frac{\Pi(\scrP\setminus\scrP_n)}{\Pi(B_n)}
      \sup_{P\in \scrP\setminus\scrP_n}\sup_{Q\in B_n}\bigl[P_0(dP/dQ)\bigr]^n
    \leq e^{-\frac{K}{2}n\ep_n^2}.
  \end{split}
\]
Choosing $0<\al<2L/K$, all three contributions go to zero as
$n\rightarrow\infty$.
\end{proof}

\begin{proof}{\it (lemma~\ref{lem:KLseparation})}\\
Assume that (\ref{eq:KLsepDelta}) holds. Like in the proof
of lemma~\ref{lem:KLequiv}, we have for all $\al\in(0,a)$,
\begin{equation}
  \label{eq:absconttwo}
  \sup_{Q\in B}\sup_{P\in W} P_0\Bigl(\frac{dP}{dQ}\Bigr)^\al
    \leq 1 + \al\,z(\al),
\end{equation}
where the function $z:[0,a)\rightarrow\RR$ is given by,
\[
  z(\al) = \sup_{Q\in B} \sup_{P\in W}
  P_0\Bigl(\frac{dP}{dQ}\Bigr)^{\al}\log\frac{dP}{dQ}.
\]
The function $z$ is convex and increasing, hence continuous
on $(0,a)$ and upper-semicontinuous at $a=0$ and maximal at $\al=a$.
Clearly, we have,
\[
  \lim_{\al\downarrow0}z(\al)\leq\sup_{Q\in B} \sup_{P\in W}
    P_0\log\frac{dP}{dQ} = \sup_{Q\in B}-P_0\log\frac{dQ}{dP_0}
    -\inf_{P\in W}-P_0\log\frac{dP}{dP_0},
\]
and the right-hand side is less than or equal to $-\Delta$.
By the continuity of $z$, there exists an $a'\in(0,a)$ such that,
$z(\al)\leq-\ft12\Delta$ for all $\al\in(0,a')$. Combining
(\ref{eq:absconttwo}) with the latter conclusion, we see that,
for all $\al\in(0,a')$,
\[
  \pi_{P_0}(B,W) = \inf_{0\leq\al\leq1}
    \sup_{Q\in B}\sup_{P\in W} P_0\Bigl(\frac{dP}{dQ}\Bigr)^\al
  \leq 1-\ft12 \al \Delta \leq e^{-\al''\Delta},
\]
where $\al''=\ft12\al$. Conversely, assume that (\ref{eq:KLsepDelta})
does not hold. Let $P\in W$, $Q\in B$ and $\al\in[0,1]$ be given;
by Jensen's inequality,
\[
  P_0\Bigl(\frac{dP}{dQ}\Bigr)^\al
    \geq \exp\Bigl(\al P_0\log\frac{dP}{dQ}\Bigr)
    = \exp\biggl( \al \Bigl( P_0\log\frac{dP}{dP_0}
        - P_0\log\frac{dQ}{dP_0}\Bigr)\biggr).
\]
Therefore, for all $\al\in(0,1)$,
\[
  \sup_{Q\in B}\sup_{P\in W}P_0\Bigl(\frac{dP}{dQ}\Bigr)^\al > e^{-\al\Delta}.
\]
\end{proof}

\begin{proof}{\it (corollary~\ref{cor:altGGV})}\\
Take $\scrP_n=\scrP$ for all $n\geq1$. Note that (\ref{eq:altGGV})
implies that $\Pi$ is a Kullback-Leibler prior, which implies that
$P_0^n\ll P_n^{\Pi}$, \cf\ corollary~\ref{cor:KLpriorpred}. Let
$V_n=\{P\in\scrP:H(P,P_0)\geq\ep_n\}$ and 
$B_n=\{\,P\in\scrP\,:\,-P_0\log(dP/dP_0)<\ep_n^2/8\,\}$. By
condition~{\it(i)} there is a cover of $V_n$ consisting of Hellinger
balls of radii $\ep_n/2$ of order $N_n=N(\ep_n,\scrP,H)\leq
\exp(L n\ep_n^2)$. Note that for every $1\leq i\leq N_n$ and all
$P\in \co{V_{n,i}}$, we have $-P_0\log(dP/dP_0)\geq H^2(P,P_0)\geq
(H(V_n,P_0)-\ep_n/2)^2=\ep_n^2/4$, while
$-P_0\log(dQ/dP_0)\leq\ep_n^2/8$ for all $Q\in B_n$. According to
lemma~\ref{lem:KLseparation}, the separation in Kullback-Leibler
divergence between $B_n$ and $V_n$ implies that
$\pi_{P_0}\bigl(\co{V_{n,i}},B_n\bigr)\leq e^{-\al \ep_n^2}$ for
some $\al>0$. Possibly after rescaling of $\ep_n$ by an
$n$-independent constant (which leads to larger $\al$, effectively),
$\pi_{P_0}$ satisfies condition (\ref{eq:pitwo}). The assertion then
follows from theorem~\ref{thm:sieve}.
\end{proof}


\bibliographystyle{imsart-nameyear}

\begin{thebibliography}{99}
{
\footnotesize
\renewcommand{\baselinestretch}{1.00}
\setlength{\itemsep}{0.0ex}
\setlength{\parskip}{0.1ex}

\bibitem{Barron86} {\sc A. Barron,}
  Discussion on {\it Diaconis and Freedman: the consistency of
    Bayes estimates.}
  Ann. Statist. {\bf 14} (1986), 26--30.

\bibitem{Barron88a} {\sc A. Barron,}
  {\it The exponential convergence of posterior probabilities
  with implications for Bayes estimators of density functions,}
  Technical report nr. 7 (1988), Dept. of Statistics, Univ. of Illinois.

\bibitem{Barron99} {\sc A. Barron,} {\sc M. Schervish} and {\sc L. Wasserman,}
  {\it The Consistency of posterior distributions in nonparametric problems,}
  Ann. Statist. {\bf 27} (1999), 536--561.

\bibitem{Bickel12} {\sc P. Bickel} and {\sc B. Kleijn,}
  {\it The semiparametric Bernstein-Von~Mises theorem,}
  Ann. Statist. {\bf 40} (2012), 206--237.

\bibitem{Birge83} {\sc L. Birg{\'e},}
  {\it Approximation dans les espaces m\'etriques
  et th\'eorie de l'estimation,}
  Zeit\-schrift f\"ur Wahr\-schein\-lich\-keits\-theorie und
  Ver\-wandte Ge\-biete {\bf  65} (1983), 181--238.

\bibitem{Birge84} {\sc L. Birg{\'e},}
  {\it Sur un th\'eor\`eme de minimax et son application aux tests,}
  Probability and Mathematical Statistics {\bf 3} (1984), 259--282.

\bibitem{DeBlasi13} {\sc P. De Blasi,} {\sc A. Lijoi} and {\sc I. Pr\"unster,}
  {\it An asymptotic analysis of a class of discrete nonparametric priors,}
  Statist. Sinica {\bf 23} (2013), 1299--1321.

\bibitem{Castillo14} {\sc I. Castillo,}
  {\it On Bayesian supremum norm contraction rates,}
  Ann. Statist. {\bf 42} (2014), 2058--2091.

\bibitem{Castillo15} {\sc I. Castillo,} {\sc J. Schmidt-Hieber,}
  and {\sc A. van der Vaart,}
  {\it Bayesian linear regression with sparse priors,}
  Ann. Statist. {\bf 43} (2015), 1986–-2018.

\bibitem{Diaconis86} {\sc P. Diaconis} and {\sc D. Freedman,}
  {\it On the Consistency of Bayes Estimates,}
  Ann. Statist. {\bf 14} (1986), 1--26.

\bibitem{Doob48} {\sc J. Doob,}
  {\it Applications of the theory of martingales,}
  Le calcul des Probabilit\'es et ses Applications,
  Colloques Internationales du CNRS, Paris (1948), 22--28.

\bibitem{Freedman63} {\sc D. Freedman,}
  {\it On the asymptotic behavior of Bayes estimates
  in the discrete case I,}
  Ann. Math. Statist. {\bf 34} (1963), 1386--1403.

\bibitem{Ghosal97} {\sc S. Ghosal,} {\sc J. Ghosh} and {\sc R. Ramamoorthi,}
  {\it Non-informative priors via sieves and packing numbers,}
  in ``Advances in Statistical Decision Theory and Applications'' (S.
  Panchapakesan and N. Balakrishnan, eds.), pp. 119--132, Birkhauser,
  Boston (1997).

\bibitem{Ghosal99} {\sc S. Ghosal,} {\sc J. Ghosh} and {\sc R. Ramamoorthi,}
  {\it Posterior consistency of Dirichlet mixtures in density estimation,}
  Ann. Statist. {\bf 27} (1999), 143--158.

\bibitem{Ghosal00} {\sc S. Ghosal,} {\sc J. Ghosh} and
  {\sc A. van~der~Vaart,}
  {\it Convergence rates of posterior distributions,}
  Ann. Statist. {\bf 28} (2000), 500--531.

\bibitem{Ghosh99} {\sc J. Ghosh,} {\sc S. Ghosal} and {\sc R. Ramamoorthi,}
  {\it Consistency issues in Bayesian nonparametrics,}
  in ``Asymptotics, Nonparametrics and Time Series: A Tribute to Madan
  Lal Puri'' (Subir Ghosh, Ed.), Marcel Dekker, (1999), 639--667. 

\bibitem{Ghosh03} {\sc J. Ghosh} and {\sc R. Ramamoorthi,}
  {\it Bayesian Nonparametrics,}
  Springer, New York (2003).

\bibitem{Hoffman15} {\sc M. Hoffmann,} {\sc J. Schmidt-Hieber,}
  {\it On adaptive posterior concentration rates,}
  Ann. Statist. {\bf 43} (2015), 2259-–2295.

\bibitem{Ibragimov81} {\sc I. Ibragimov,} {\sc R. Has'minskii,}
  {\it Statistical estimation: asymptotic theory,}
  Springer, New York (1981).

\bibitem{Kleijn04} {\sc B. Kleijn,}
  {\it Bayesian asymptotics under misspecification.}
  PhD.~Thesis, Free University Amsterdam (2004).

\bibitem{Kleijn06} {\sc B. Kleijn} and {\sc A. van~der~Vaart,}
  {\it Misspecification in infinite-dimensional
  Bayesian statistics.}
  Ann. Statist. {\bf 34} (2006), 837--877.

\bibitem{Kleijn12b} {\sc B. Kleijn} and {\sc A. van~der~Vaart,}
  {\it The Bernstein-Von-Mises theorem under misspecification.}
  Electron. J. Statist. {\bf 6} (2012), 354--381.

\bibitem{Kleijn15} {\sc B. Kleijn} and {\sc B. Knapik,}
  {\it Semiparametric posterior limits under local asymptotic
  exponentiality,} (submitted for publication).

\bibitem{KleijnXX} {\sc B. Kleijn} and {\sc Y.~Y. Zhao,}
  {\it Posterior rates of convergence revisited}
  (in preparation).

\bibitem{Kleijn18} {\sc B. Kleijn,}
  {\it On the frequentist validity of Bayesian limits,}
  arXiv:1611.08444

\bibitem{LeCam72b} {\sc L. Le~Cam,}
  {\it On posterior rates of convergence,} (unpublished).

\bibitem{LeCam73} {\sc L. Le~Cam,}
  {\it Convergence of estimates under dimensionality restrictions,}
  Ann. Statist. {\bf 1} (1973), 38--55.

\bibitem{LeCam75} {\sc L. Le~Cam,}
  {\it On local and global properties in the theory of asymptotic
  normality of experiments,}
  Stochastic Process. and Related Topics {\bf 1} (1975), 13--53.
  (ed. M.L. Puri), Academic Press, New York.

\bibitem{LeCam86} {\sc L. Le~Cam,}
  {\it Asymptotic methods in statistical decision theory,}
  Springer, New York (1986).

\bibitem{Leonard78} {\sc T. Leonard,}
  {\it Density estimation, stochastic processes and prior information,}
  J. Roy. Statist. Soc. {\bf B40} (1978), 113--146.
  
\bibitem{Matusita71} {\sc K. Matusita,}
  {\it Some propoerties of affinity and applications,}
  Ann. Inst. Statist. Math. {\bf 23} (1971), 137--155.

\bibitem{Reiss17} {\sc M. Reiss,} {\sc J. Schmidt-Hieber,}
  {\it Nonparametric Bayesian analysis for support boundary recovery,}
  arXiv:1703.08358.

\bibitem{Ritov14} {\sc Y.Ritov,} {\sc P. Bickel,} {\sc A. Gamst} and
  {\sc B. Kleijn,}
  {\it The Bayesian Analysis of Complex, High-Dimensional Models:
  Can It Be CODA?}
  Statist. Sci. {\bf 29} (2014), 619--639.

\bibitem{Schwartz61} {\sc L. Schwartz,}
  {\it Consistency of Bayes procedures,}
  PhD. thesis, UC Berkeley Statistics Department (1961).

\bibitem{Schwartz65} {\sc L. Schwartz,}
  {\it On Bayes procedures,}
  Zeitschrift f\"ur Wahrscheinlichkeitstheorie und verwandte Gebiete
  {\bf 4} (1965), 10--26.

\bibitem{Shen01} {\sc X. Shen} and {\sc L. Wasserman,}
  {\it Rates of convergence of posterior distributions,}
  Ann. Statist. {\bf 29} (2001), 687--714.

\bibitem{Strasser85} {\sc H. Strasser,}
  {\it Mathematical theory of statistics,}
  de Gruyter, Berlin (1985).

\bibitem{Toussiant72} {\sc G. Toussiant,}
  {\it Some properties of Matusita's measure of affinity of
  several distributions,}
  Ann. Inst. Statist. Math. {\bf 26} (1974), 389--394. 

\bibitem{vdVaart96} {\sc A. van der Vaart} and {\sc J. Wellner,}
  {\it Weak convergence and empirical processes,}
  Springer, New York (1996).

\bibitem{Walker04} {\sc S. Walker,}
  {\it New approaches to Bayesian consistency,}
  Ann. Statist. {\bf 32} (2004), 2028--2043.

\bibitem{Walker05} {\sc S. Walker,} {\sc A. Lijoi} and {\sc I. Pr{\"u}nster,}
  {\it Data tracking and the understanding of Bayesian consistency,}
  Biometrika {\bf 92} (2005), 765--778.

\bibitem{Walker07} {\sc S. Walker,} {\sc A. Lijoi} and {\sc I. Pr{\"u}nster,}
  {\it On rates of convergence for posterior distributions in
  infinite-dimensional models,}
  Ann. Statist. {\bf 35} (2007), 738--746.

\bibitem{Wong95} {\sc W.H.~Wong} and {\sc X. Shen,}
  {\it Probability inequalities for likelihood ratios and convergence
  rates of sieve MLEs,}
  Ann. Statist. {\bf 23} (1995), 339--362.
}
\end{thebibliography}

\end{document}